\DeclareFontFamily{U}{mathx}{\hyphenchar\font45}
\DeclareFontShape{U}{mathx}{m}{n}{
      <5> <6> <7> <8> <9> <10>
      <10.95> <12> <14.4> <17.28> <20.74> <24.88>
      mathx10
      }{}
\DeclareSymbolFont{mathx}{U}{mathx}{m}{n}
\DeclareMathAccent{\widecheck}{0}{mathx}{"71}
\DeclareSymbolFont{cyrletters}{OT2}{wncyr}{m}{n}
\DeclareMathSymbol{\Sha}{\mathalpha}{cyrletters}{"58}
\DeclareMathAlphabet\mathbfcal{OMS}{cmsy}{b}{n}
\DeclareMathOperator{\tbP} {\bf{P}}
\DeclareMathOperator{\tbN} {\bf{N}}
\DeclareMathOperator{\tbW} {\bf{W}}
\DeclareMathOperator{\tbH} {\bf{H}}
\DeclareMathOperator{\tbU} {\bf{U}}
\DeclareMathOperator{\tbG} {\bf{G}}
\DeclareMathOperator{\tbL} {\bf{L}}
\DeclareMathOperator{\tbGL} {\bf{GL}}
\DeclareMathOperator{\mcMT} {\mathcal{MT}}
\DeclareMathOperator{\tbM} {\bf{M}}
\DeclareMathOperator{\hgt} {\bf{ht}}
\DeclareMathOperator{\Gr} {Gr}
\DeclareMathOperator{\CC} {\mathbb{C}}
\DeclareMathOperator{\RR} {\mathbb{R}}
\DeclareMathOperator{\QQ} {\mathbb{Q}}
\DeclareMathOperator{\ZZ} {\mathbb{Z}}
\DeclareMathOperator{\Lie} {Lie}
\DeclareMathOperator{\Ad} {Ad}
\DeclareMathOperator{\Zar} {Zar}
\DeclareMathOperator{\Gal} {Gal}
\DeclareMathOperator{\Mat} {Mat}
\DeclareMathOperator{\Hom} {Hom}
\newtheorem{lemma}{Lemma}[section]
\newtheorem{thm}[lemma]{Theorem}
\theoremstyle{definition}\newtheorem{definition}[lemma]{Definition}
\theoremstyle{definition}\newtheorem{remark}[lemma]{Remark}
\theoremstyle{definition}\newtheorem{notation}[lemma]{Notation}
\newcommand{\mb}[1]{\mathbb{#1}}
\newcommand{\mc}[1]{\mathcal{#1}}
\newcommand{\mf}[1]{\mathfrak{#1}}
\newcommand{\ra}{\rightarrow}
\newcommand{\ol}[1]{\overline{#1}}
\newcommand{\wt}[1]{\widetilde{#1}}
\newcommand{\tb}[1]{\textbf{#1}}
\newcommand{\bs}{\backslash}
\newcommand{\wc}{\widecheck}
\begin{document}
\title{Ax-Schanuel for variations of mixed Hodge structures}
\author{Kenneth Chung Tak Chiu}
\address{Department of Mathematics,  University of Toronto, Toronto, Canada.}
\email{kennethct.chiu@mail.utoronto.ca}
\begin{abstract}
We give properties of the real-split retraction of the mixed weak Mumford-Tate domain and prove the Ax-Schanuel property of period mappings arising from variations of mixed Hodge structures.  An ingredient in the proof is the definability of the mixed period mapping obtained by Bakker-Brunebarbe-Klingler-Tsimerman. In comparison with preceding results, in the point counting step, we count rational points on definable quotients instead.
\end{abstract}

\maketitle

\section{Introduction}

\subsection{Motivation}
In 1971, Ax proved the function field analogue \cite{Ax} of Schanuel's conjecture for exponentials.
This result was extended to other functions in variational Hodge theory, e.g. the $j$-function by Pila-Tsimerman  \cite{PT}, uniformizations of Shimura varieties by Mok-Pila-Tsimerman \cite{MPT}, variations of Hodge structures by Bakker-Tsimerman  \cite{BT}, and mixed Shimura varieties of Kuga type by Gao \cite{G}.
In this paper, we extend these results to variations of mixed Hodge structures. This was conjectured by Klingler \cite{K}.  
In another paper \cite{C}, the author used this result to prove the Ax-Schanuel theorem for derivatives  of mixed period mappings.
These generalizations of the Ax-Schanuel theorem are key results of functional transcendence, an area that has found fruitful applications in arithmetic geometry through the Pila-Zannier method \cite{PZ}. 
For example,  the recent work \cite{PSTEG} of Pila, Shankar and Tsimerman  on the Andr\'{e}-Oort conjecture uses the Ax-Lindemann-Weierstrass theorem (a specialization of the Ax-Schanuel theorem) for Shimura varieties established by Klingler-Ullmo-Yafaev \cite{KUY}, while the mixed case of the conjecture uses the corresponding theorem for  mixed Shimura varieties established by Gao \cite{G2}. 
Analogously, the Hodge theoretic generalizations of the Ax-Schanuel theorem mentioned above are used recently in several works \cite{BKU}\cite{BD}\cite{DR}\cite{PilSca} on the geometric aspects of the Zilber-Pink conjecture, which is a vast generalization of the Andr\'{e}-Oort conjecture.

Functional transcendence results have also been used in many other Diophantine problems. For example,  Lawrence-Venkatesh \cite{LV} and  Lawrence-Sawin  \cite{LS} used the Ax-Schanuel theorem for period mappings \cite{BT} to prove Shafarevich type conjectures for hypersurfaces. Gao used the Ax-Schanuel theorem for mixed Shimura varieties of Kuga type \cite{G} to study the generic rank of Betti map \cite{G4}, which was then used to prove a uniform bound for the number of rational points on curves by Dimitrov-Gao-Habegger \cite{DGH}.
Using the result in this paper, Hast develops a higher dimensional Chabauty-Kim method in \cite{H}.

\subsection{Statement of results}\label{Statement of results}
Notations from this section will be used throughout the paper including the appendix.
Let $X$ be a smooth quasiprojective irreducible algebraic variety over $\mb{C}$. Let 
$(\mc{H}, \mc{W}_\bullet, \mc{F}^\bullet, \mc{Q})$ be an admissible graded-polarized variation of mixed $\mb{Z}$-Hodge structures (GPVMHS) on $X$, where $\mc{W}_\bullet$ is the weight filtration, $\mc{F}^\bullet$ is the Hodge filtration, and $\mc{Q}$ is the graded polarization. Let $\eta$ be a Hodge generic point of $X$. Let $\Gamma$ be the image of the monodromy representation $\pi_1(X,\eta)\ra \tb{GL}(\mc{H}_{\mb{Z},\eta
})$ associated to the local system $\mc{H}$. Let $\textbf{P}$ be the identity component of the $\mb{Q}$-Zariski closure of $\Gamma$ in $\tb{GL}(\mc{H}_{\mb{Q}, \eta})$. Let $\textbf{U}$ be the unipotent radical of $\textbf{P}$.
Let $\tb{S}:=\text{Res}_{\mb{C}/\mb{R}} \mb{G}_m$ be the Deligne torus.
The Deligne splitting of the graded-polarized mixed Hodge structure $h_0$ on the stalk $\mc{H}_{\eta}$ defines a representation $\rho_0:\tb{S}_\mb{C}\ra \textbf{GL}(\mc{H}_{\mb{C},\eta})$ \cite[p. 7]{K}.
Let $\mc{M}$ be the space \cite[\S 3.5]{BBKT} parametrizing mixed $\mb{R}$-Hodge structures, with fixed graded-polarization and Hodge numbers which are the same as that of the mixed Hodge structures our GPVMHS is parametrizing. 
Let $\wc{\mc{M}}$ be the corresponding projective space parametrizing decreasing filtrations \cite[\S 3.5]{BBKT}.
Let $D$ be the $\textbf{P}(\mb{R})^+\textbf{U}(\mb{C})$-orbit of $h_0$ in $\mc{M}$, where $\tbP(\RR)^+$ is the identity component of $\tbP(\RR)$.
Let $\widecheck{D}$ be the $\tbP(\CC)$-orbit of $h_0$ in $\wc{\mc{M}}$.  
First assume $\Gamma\subset \tbP(\mb{Z})\cap \tbP(\mb{R})^+=:\tbP(\ZZ)^+$. This is assumed everywhere in the paper outside Theorem \ref{Ax-Schanuel for variations of mixed Hodge structures: mod gamma} and its proof.
Let $\psi: X\ra \Gamma\bs D$ be the period mapping\footnote{We have \emph{a priori} a period mapping $\wt{X}\ra D_{\mcMT}$, where $\wt{X}$ is the universal cover of $X$, and $D_{\mcMT}$ is the mixed Mumford-Tate domain, but it actually maps into $D$ (see Lemma \ref{weak period map}).}.   
Let $\varphi$ be the composition of $\psi$ with $\Gamma\bs D\ra \textbf{P}(\mb{Z})^+\bs D$. Consider the fiber product
\begin{center}
\begin{tikzcd}
W\arrow[r]\arrow[d] & D\arrow[d,"\pi"]
\\X\arrow[r, "\varphi"] & \textbf{P}(\mb{Z})^+\backslash D.
\end{tikzcd}
\end{center}

\begin{definition}
Let $h$ be any mixed $\mb{Z}$-Hodge structure in  $D$. Let $\textbf{M}$ be a normal algebraic $\mb{Q}$-subgroup of the Mumford-Tate group $\mcMT_{h}$ of $h$. Let $\tb{M}_u$ be its unipotent radical. Let $\tbM(\RR)^+$ be the identity component of $\tbM(\RR)$. The  $\textbf{M}(\mb{R})^+\tb{M}_u(\mb{C})$-orbit $D(\textbf{M})$ of $h$ is called a \textbf{weak Mumford-Tate domain}. For any $D(\textbf{M})\subset D$, any irreducible component of $\varphi^{-1}\pi (D(\textbf{M}))$ is called a \textbf{weakly special subvariety} of $X$.
\end{definition}

Let $p_X: X\times \widecheck{D} \ra X$ and $p_D: X\times \widecheck{D} \ra \widecheck{D}$ be the projections onto $X$ and $\widecheck{D}$ respectively. Let $U$ be an irreducible analytic subset of $W$, denote by $U^{\Zar}$ the Zariski closure of $U$ in $X\times \wc{D}$.
Let $V:=U^{\Zar}$. 
The main goal of this paper is to prove the following statement:

\begin{thm}\label{Ax-Schanuel for variations of mixed Hodge structures}
If
$\dim V-\dim U< \dim \wc{D},$
then $p_X(U)$ is contained in a proper weakly special subvariety.
\end{thm}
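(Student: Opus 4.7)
My plan follows the o-minimal Pila-Zannier strategy used by Bakker-Tsimerman \cite{BT} for pure VHS and extended to the Kuga mixed Shimura setting by Gao \cite{G}; the main adaptation is for the non-reductive factor $\tbU$. I argue by contradiction: assume $p_X(U)$ is not contained in any proper weakly special subvariety of $X$, and aim to deduce $\dim V - \dim U \geq \dim \wc{D}$. Invoking the Bakker-Brunebarbe-Klingler-Tsimerman definability of the mixed period map, I fix a definable fundamental set $\mathfrak{F} \subset D$ for the $\tbP(\ZZ)^+$-action, and thereby a definable realization $W_{\mathfrak{F}}$ of $W$ inside $X \times \mathfrak{F}$. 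The object to study is the definable set $\Sigma$ of elements $g \in \tbP(\RR)^+ \tbU(\CC)$ such that $(\mathrm{id} \times g)(V)$ meets $W_{\mathfrak{F}}$ in an irreducible component of dimension at least $\dim U$. Every monodromy element $\gamma \in \Gamma$ produces such a translate, so after the natural identification we have $\Gamma \subset \Sigma$.

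The second step is the counting step. As emphasized in the abstract, the counting is performed on the definable quotient $\tbP(\ZZ)^+ \bs \Sigma$ rather than on $\Sigma$ itself; this circumvents the volume growth in the $\tbU$-direction that would otherwise obstruct a direct Pila-Wilkie application. The contradiction hypothesis, together with Hodge-genericity of $\eta$ and a monodromy-theoretic lower bound on the number of relevant $\Gamma$-cosets, forces $\tbP(\ZZ)^+ \bs \Sigma$ to contain many rational points of polynomially bounded height. Pila-Wilkie then yields positive-dimensional real-semialgebraic arcs in $\Sigma$, and the connected $\QQ$-algebraic subgroup $\tbN \leq \tbP$ generated by the Zariski closures of these arcs acts preserving $V$.

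The third step identifies $\tbN$ as the structure group of a proper weak Mumford-Tate subdomain $D(\tbN) \subset D$. This is where the real-split retraction of the mixed weak Mumford-Tate domain developed earlier in the paper is essential: it transports the $\CC$-action of unipotent elements onto an $\RR$-action on a real-split slice compatible with the Hodge filtration, allowing one to identify the orbit $\tbN(\RR)^+ \tb{N}_u(\CC) \cdot h$ with a weak Mumford-Tate subdomain, whose preimage $\varphi^{-1}\pi(D(\tbN))$ is weakly special. Normality of $\tbN$ inside the Mumford-Tate group of the generic point of $p_X(U)$ follows from its construction via monodromy; properness of $D(\tbN)$ follows from the atypicality assumption, giving the required contradiction.

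The main obstacle lies precisely in the final identification: unlike in the pure or Kuga cases, the unipotent directions of $\tbP$ are not controlled by an arithmetic lattice of finite covolume in a reductive group, so the passage from the real-semialgebraic arcs produced by Pila-Wilkie to a genuine $\QQ$-algebraic subgroup whose unipotent radical is correctly captured is delicate. Ensuring that the arcs detect unipotent components of $\tbN$ (rather than only its reductive Levi), and that height estimates on $\Gamma$ can be transported across the mixed fibration, is exactly what the real-split retraction buys us, and verifying these compatibilities is the technical heart of the argument.
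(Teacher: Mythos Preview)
Your outline has the right ingredients---definability of the period map, a definable set of translates, Pila--Wilkie, and a stabilizer group $\tbN$---but the logical flow is inverted at the crucial point, and the counting step is not what the paper does. In the paper, $\tbN$ is defined \emph{before} any counting as the identity component of the $\QQ$-Zariski closure of $\mathrm{Stab}(V)\subset\tbP(\ZZ)^+$; its normality in $\tbP$ is then established by a separate Hilbert-scheme argument (spreading $V$ over a parameter space and showing the stabilizer of a very general fiber is normal). Only after this does one form the definable image $\ol{I}$ of $I$ in the algebraic quotient $(\tbP/\tbN)(\CC)$ and count there. Your $\Sigma$ quotiented by $\tbP(\ZZ)^+$ is not this object, and rational points in $\tbP(\ZZ)^+\bs\Sigma$ are not what Pila--Wilkie is applied to. Moreover, your $\tbN$ is produced \emph{from} the Pila--Wilkie arcs; there is no reason such a group is normal in any Mumford--Tate group, so the passage to a weak Mumford--Tate subdomain $D(\tbN)$ and the claimed contradiction are unjustified.

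The second gap is that you have no mechanism to produce polynomially many rational points in $\ol{I}$. The paper does this via a trichotomy on the growth of $Z:=r(p_D(U))$: either (1) the projection to the pure factor $D_{\tbL}$ is positive-dimensional and dominates, so Bakker--Tsimerman volume estimates apply; or (2) the unipotent height $\hgt_{\tbW}$ grows faster, in which case a combinatorial graph argument combined with a height bound on products of conjugated unitriangular matrices (Lemma~\ref{unipotent matrix height bound}) gives enough points in the unipotent projection $I_{\tbW}$; or (3) both are bounded, and definable Chow finishes directly. The real-split retraction is used precisely to set up this splitting $r(D)\simeq D_{\tbN_r}\times D_{\tbL}\times D_{\tbU,\RR}$ and the associated fundamental set and heights---not, as you suggest, to identify $\tbN$-orbits with subdomains.

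Finally, Pila--Wilkie does not conclude the theorem directly: it produces a semialgebraic curve $C$ in $\ol{I}$ containing many points $[\gamma]$ with $\gamma\in\tbP(\ZZ)^+$, and one then replaces $(V,U)$ by a pair with strictly smaller $\dim V-\dim U$ or $\dim X-\dim U$ (Lemma~\ref{inifinite graph, AS}) and invokes the induction on $(\dim X,\dim V-\dim U,\dim X-\dim U)$. Your contradiction argument bypasses this induction entirely, and without it there is no way to close the loop.
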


\begin{remark}\label{Remark, main statement}
This theorem is equivalent to the statement with $\varphi$ replaced by $\psi$, \emph{cf.} Lemma \ref{equivalent main statement}. By \cite[Corollary 6.7]{BBKT}, weakly special subvarieties are indeed algebraic. 
\end{remark}

Let $\psi': X\ra \Gamma\bs \mc{M}$ be the period mapping.
Let $p_X:X\times \mc{M}\ra X$ be the projection onto $X$.
Consider the fiber product
\begin{center}
\begin{tikzcd}
W'\arrow[r]\arrow[d] & \mc{M}\arrow[d,"\pi^\prime"]
\\X\arrow[r, "\psi'"] & \Gamma\bs \mc{M}.
\end{tikzcd}
\end{center}
Let $U'$ be an irreducible analytic subset of $W'$. Let $U'^{\Zar}$ be the Zariski closure of $U'$ in $X\times \wc{\mc{M}}$.   

\begin{thm}\label{Ax-Schanuel for variations of mixed Hodge structures: mod gamma}
If
$\dim U'^{\Zar}-\dim U'<\dim \wc{D},$
then $p_X(U')$ is contained in a proper weakly special subvariety.
\end{thm}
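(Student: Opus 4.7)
My plan is to deduce Theorem \ref{Ax-Schanuel for variations of mixed Hodge structures: mod gamma} from Theorem \ref{Ax-Schanuel for variations of mixed Hodge structures} by passing to a finite étale cover of $X$ on which the monodromy lies in $\mathbf{P}(\mathbb{Z})^+$. Two features distinguish the two statements: the hypothesis $\Gamma\subset\mathbf{P}(\mathbb{Z})^+$ has been dropped, and $\widecheck{D}$ has been replaced by the ambient space $\widecheck{\mathcal{M}}$. The second change is essentially free: $\widecheck{D}$ is the $\mathbf{P}(\mathbb{C})$-orbit of $h_0$, hence a closed algebraic subvariety of $\widecheck{\mathcal{M}}$, and since $\psi'$ factors through $\Gamma\backslash D\subset\Gamma\backslash\mathcal{M}$ we have $U'\subset X\times D$, so $U'^{\Zar}$ may equally well be computed as the Zariski closure of $U'$ in $X\times\widecheck{D}$.

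For the main reduction, I would use that $\mathbf{P}$ is the identity component of the $\mathbb{Q}$-Zariski closure of $\Gamma$ and that $\mathbf{P}(\mathbb{Z})^+$ has finite index in $\mathbf{P}(\mathbb{Z})$ (because $\mathbf{P}(\mathbb{R})/\mathbf{P}(\mathbb{R})^+$ is finite), so $\Gamma':=\Gamma\cap\mathbf{P}(\mathbb{Z})^+$ has finite index in $\Gamma$. Let $f:X'\to X$ be the corresponding finite étale cover and $\psi_{X'}:X'\to\Gamma'\backslash D$ the lifted period map, which fits into $\psi'\circ f=q\circ\psi_{X'}$ for the finite quotient $q:\Gamma'\backslash D\to\Gamma\backslash D$. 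On $X'$ the monodromy is $\Gamma'\subset\mathbf{P}(\mathbb{Z})^+$, so Theorem \ref{Ax-Schanuel for variations of mixed Hodge structures} applies. Form the fiber product $W=X'\times_{\Gamma'\backslash D}D$ and choose an irreducible component $U''\subset W$ of the preimage of $U'$ under the finite map $W\to W'$ induced by $(f,\mathrm{id})$ together with the inclusion $D\hookrightarrow\mathcal{M}$. Since both $f$ and $q$ are finite, $U''\to U'$ is a finite surjection, giving $\dim U''=\dim U'$; likewise the Zariski closure $U''^{\Zar}$ in $X'\times\widecheck{D}$ is an irreducible component of the preimage of $U'^{\Zar}$ under a finite map, so $\dim U''^{\Zar}=\dim U'^{\Zar}$. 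Consequently $\dim U''^{\Zar}-\dim U''<\dim\widecheck{D}$, and invoking Theorem \ref{Ax-Schanuel for variations of mixed Hodge structures} (through its $\psi$-version provided by Remark \ref{Remark, main statement}) furnishes a proper weakly special subvariety $Y'=\psi_{X'}^{-1}\pi'(D(\mathbf{M}))$ of $X'$ containing $p_{X'}(U'')$, where $\pi':D\to\Gamma'\backslash D$ is the quotient and $\mathbf{M}\subsetneq\mathcal{MT}_h$ is a normal algebraic $\mathbb{Q}$-subgroup.

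It remains to descend $Y'$ back to $X$. Using $\pi=q\circ\pi':D\to\Gamma\backslash D$ I obtain $f^{-1}(\psi'^{-1}\pi(D(\mathbf{M})))=\psi_{X'}^{-1}q^{-1}\pi(D(\mathbf{M}))\supseteq Y'$, hence $f(Y')\subseteq Y:=\psi'^{-1}\pi(D(\mathbf{M}))$, which is a weakly special subvariety of $X$ for the same $\mathbf{M}$. It is a proper subvariety because $D(\mathbf{M})\subsetneq D$ at a Hodge generic point. Finally $p_X(U')=f(p_{X'}(U''))\subseteq f(Y')\subseteq Y$, as required. The most delicate point in this plan is the bookkeeping in the middle step: one must choose the irreducible component $U''$ so that both $\dim U''$ and $\dim U''^{\Zar}$ match those of $U'$, which follows from the finiteness of $f$ and $q$ but deserves explicit verification; the identification of weakly special subvarieties under the cover is then essentially formal because they are defined intrinsically in terms of the Mumford-Tate data, which is preserved by the étale cover.
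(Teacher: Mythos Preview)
Your proposal is correct and follows essentially the same approach as the paper (Lemma~\ref{equivalent main statement 2}): pass to a finite \'etale cover on which the monodromy lands in $\mathbf{P}(\mathbb{Z})^+$, identify $W'$ with $W_\Gamma$, and invoke the $\psi$-version of Theorem~\ref{Ax-Schanuel for variations of mixed Hodge structures} via Lemma~\ref{equivalent main statement}. The paper is terser---it simply says ``passing to a finite covering we can assume $\Gamma\subset\mathbf{P}(\mathbb{Z})^+$'' and then observes $W_\Gamma=W'$---while you spell out the descent of weakly special subvarieties; one small inaccuracy in your write-up is that $\widecheck{D}$ is only locally closed (an orbit) rather than closed in $\widecheck{\mathcal{M}}$, but this does not affect the dimension comparison you need.
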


\subsection{Ideas of proof}\label{Idea of proof}
The main theorems will be proved by induction on 
$$(\dim X, \dim V-\dim U, \dim X- \dim U)$$
 in lexicographical order. 
 In Section \ref{Base cases of induction}, we will prove the base cases of induction.
Let $\tbN$ be the identity component of the $\mb{Q}$-Zariski closure of the $\tbP(\ZZ)^+$-stabilizer of $V$.
In the proofs of the Ax-Schanuel results in preceding works \cite{BT}\cite{G}\cite{MPT}, non-triviality of $\tbN$ was first obtained by applying the Pila-Wilkie counting theorem \cite{PW} on certain definable set $I$ and using the induction hypothesis. It was then used to construct a splitting of the period mappings. 
For this splitting to make sense, one has to prove that $\tbN$ is normal in the generic Mumford-Tate group, which could be strictly bigger than $\tbP$.
We use another approach instead: we apply the Pila-Wilkie theorem on the image $\ol{I}$ of $I$ under the map $\tbP(\mb{R})\ra (\tbP/\tbN)(\mb{R})$. 
The group $\tbN$ is indeed normal in $\tbP$, a fact which will be proved in Section \ref{Normality of the stabilizer} using the Hilbert scheme argument of Mok-Pila-Tsimerman \cite{MPT}. 

The definable set $I$ will be constructed as in the preceding works. It is defined in a way to facilitate the use of the induction hypothesis after the application of Pila-Wilkie on $\ol{I}$. At the first attempt, one perturbs $V$ and collects all  $\gamma\in \tbP(\RR)$ such that  $\dim \gamma V\cap W\geq \dim U$. However, since $I$ has to be definable, one modifies the attempt by further intersecting $W\cap \gamma V$ with $X\times \Phi$, where $\Phi$ is a definable fundamental set for the action of $\tbP(\mb{Z})^+$ on $D$. The definability of $W\cap (X\times \Phi)$ follows from the definability of the  mixed period mappings obtained recently by Bakker-Brunebarbe-Klingler-Tsimerman \cite{BBKT}. The precise construction of $I$ will be made in Section \ref{definable subset I of P(R)}. 

In order to use the counting theorem of Pila-Wilkie \cite{PW},  we need $\ol{I}$ to contain at least polynomially many rational points. We count rational points in $\ol{I}$ using the mixed point counting method  in \cite{G}. This method leads us to a trichotomy (Section \ref{Heights}), roughly as follows: 
\begin{enumerate}
\item The projection of $U$ to the reductive part, modulo the stabilizing part (since we are counting modulo $\tbN(\mb{Q}))$, has positive dimension, and the unipotent direction grows slower than the reductive direction. In this case, we apply the volume estimates of Griffith transverse subvarieties of a pure weak Mumford-Tate domain established by Bakker-Tsimerman \cite{BT}. This will be done in Section \ref{Proof of Case (1)}.
\item The unipotent direction grows faster than the reductive direction. In this case, we prove and apply a height estimate on products of certain conjuguates of upper unitriangular matrices (upper triangular matrix with 1's on the diagonal). This will be done in Section \ref{Proof of case (2)}.
\item The point count, modulo $\tbN(\mb{Q})$, is finite and  $U$ lies in a unipotent fiber. This case uses the definable Chow theorem \cite{PS}, see Section \ref{Proof of case (3)}.
\end{enumerate}

The trichotomy motivates the splitting of a domain, which parametrizes mixed Hodge structures, into three parts: the unipotent part, the stabilizing reductive part, and the non-stabilizing reductive part. 
 More precisely, in Section \ref{Real split locus of domains}, we first split the real-split retraction $r(D)$ of the  mixed weak Mumford-Tate domain $D$ into the unipotent part $D_{\tbU,\mb{R}}$ and the reductive part $D_{\text{Gr}}$ (Theorem \ref{splitting of retraction of weak mixed domain}).  Then we split the pure domain $D_{\text{Gr}}$ into two factors $D_{\tbN_r}$ and $D_{\tbL}$, where $\tbN_r$ is a Levi subgroup of $\tbN$, while $\tbL$ is defined in Section \ref{constructing definable fundamental set}. The first splitting makes use of the fact that $\tbU(\mb{R})$ and $\tbG(\mb{R})^+$ (where $\tbG$ is a Levi subgroup of $\tbP$ containing $\tbN_r$) act transitively on $D_{\tbU,\mb{R}}$ and $D_{\text{Gr}}$ respectively. These transitivities are proved in Lemma \ref{retraction of D is transitive} using Andr\'{e}'s normality \cite {A} of the connected algebraic monodromy group in the generic Mumford-Tate group, and results on the real-split retraction of the connected mixed Mumford-Tate domain $D_{\mcMT}^+$ by Bakker-Brunebarbe-Klingler-Tsimerman \cite[\textsection 6]{BBKT}.
 
 The definable fundamental domain $\Phi$ is built from the fundamental domains in each of $D_{\tbU,\mb{R}}$, $D_{\tbN_r}$ and $D_{\tbL}$, see  Section \ref{constructing definable fundamental set}.

In order to make sense of the comparison between the growths of the unipotent and the reductive directions in the trichotomy, we define the height of a subset of $r(D)$ in Section \ref{Heights}.

Preliminaries on the weak Mumford-Tate domains are collected in Appendix \ref{Appendix}.

Gao and Klingler \cite{G3} prove the same results independently using a similar approach at the same time. 
The main difference is that we count points in the group $(\tbP/\tbN)(\CC)$ directly as explained earlier. The point count results in Sections \ref{Proof of Case (1)} to \ref{Proof of case (3)} of our paper are applied in the proof of the geometric Andr\'{e}-Grothendieck period conjecture by Bakker-Tsimerman \cite{BT22} recently.

\subsection{Acknowledgements}
I would like to thank my advisor Jacob Tsimerman for suggesting me to work on this problem, and for many insightful discussions and helpful comments.

\section{Base cases of induction}\label{Base cases of induction}
We prove Theorem \ref{Ax-Schanuel for variations of mixed Hodge structures} and Theorem \ref{Ax-Schanuel for variations of mixed Hodge structures: mod gamma} simultaneously by induction on $\dim X$. 
The case when $\dim X=0$ is trivial. Suppose $\dim X>0$.
For each $\dim X$, we prove Theorem \ref{Ax-Schanuel for variations of mixed Hodge structures}  by induction on 
$(\dim V-\dim U, \dim X- \dim U)$
in lexicographical order, 
and deduce Theorem \ref{Ax-Schanuel for variations of mixed Hodge structures: mod gamma} from  Theorem \ref{Ax-Schanuel for variations of mixed Hodge structures}.

If the monodromy group $\Gamma$ is contained in $\tbP(\ZZ)^+$, let $W_{\Gamma}:= X\times_{\Gamma\bs D} D$, and let $S$ be the set of all distinct representatives of the cosets in $\tbP(\ZZ)^+/\Gamma$, we have $W=\bigcup_{g\in S}gW_{\Gamma}$.
Then since $U$ is irreducible, $g_0^{-1}U\subset W_{\Gamma}$ for some $g_0\in S$.
The following lemma then follows:

\begin{lemma}\label{equivalent main statement}
Theorem \ref{Ax-Schanuel for variations of mixed Hodge structures} is equivalent to the following: Assume $\Gamma\subset \tbP(\ZZ)^+$. Let $U_{\Gamma}:=g_0^{-1}U$. Let $V_{\Gamma}:=U_{\Gamma}^{\Zar}$. 
If
$\dim V_{\Gamma}-\dim U_{\Gamma}< \dim \wc{D},$
then $p_X(U_{\Gamma})$ is contained in a proper weakly special subvariety.
\end{lemma}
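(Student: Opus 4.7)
The plan is to observe that translation by $g\in\tbP(\CC)$ on the $\wc{D}$-factor of $X\times\wc{D}$ is an algebraic automorphism fixing the $X$-factor pointwise, and therefore preserves both the numerical hypothesis $\dim V-\dim U<\dim\wc{D}$ and the weakly-special conclusion about $p_X$. The decomposition $W=\bigcup_{g\in S}gW_{\Gamma}$ from the paragraph preceding the lemma, combined with the irreducibility of $U$, already produces some $g\in S$ with $g^{-1}U\subset W_{\Gamma}$; set $U_{\Gamma}:=g^{-1}U$.

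Given this $g$, I would transfer the invariants by applying the automorphism $\mathrm{id}_X\times g^{-1}$ of $X\times\wc{D}$, which is algebraic because $g\in\tbP(\CC)$ acts algebraically on $\wc{D}$. It commutes with taking Zariski closures, so $V_{\Gamma}=g^{-1}V$, and hence $\dim V_{\Gamma}-\dim U_{\Gamma}=\dim V-\dim U$. It fixes the first factor, so $p_X(U_{\Gamma})=p_X(U)$; in particular the property of lying in a proper weakly special subvariety of $X$ is preserved. Thus the restricted statement implies Theorem \ref{Ax-Schanuel for variations of mixed Hodge structures}. The converse is tautological, since in the restricted statement one may take $g=1$ to recover Theorem \ref{Ax-Schanuel for variations of mixed Hodge structures} in the case $\Gamma\subset\tbP(\ZZ)^+$.

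I do not expect any serious obstacle. The only point of real content is the decomposition of $W$ already displayed above the lemma, together with the observation that an irreducible analytic subset is contained in a single sheet of the covering $W\to W_\Gamma$; once these are in hand, the equivalence reduces to the formal algebraic invariance of dimensions and of $p_X$ under the automorphism $\mathrm{id}_X\times g^{-1}$.
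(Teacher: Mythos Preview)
Your proposal is correct and follows the same approach as the paper. The paper's own proof is essentially the single sentence ``The following lemma then follows'' after the paragraph establishing $W=\bigcup_{g\in S}gW_{\Gamma}$ and $g^{-1}U\subset W_{\Gamma}$; you have simply spelled out the invariance of $\dim V-\dim U$ and of $p_X(U)$ under the algebraic automorphism $\mathrm{id}_X\times g^{-1}$, which is exactly what the paper leaves implicit. One small wording issue: there is no covering map $W\to W_\Gamma$; rather, the $gW_\Gamma$ are open and closed in $W$ (they are connected components, since $W_\Gamma\to X$ and $W\to X$ are coverings with monodromy $\Gamma\subset\tbP(\ZZ)^+$), which is why the connected set $U$ lands in a single one.
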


\begin{lemma}\label{equivalent main statement 2}
Let $k$ be an integer. If Theorem \ref{Ax-Schanuel for variations of mixed Hodge structures} holds for $\dim X=k$, then Theorem \ref{Ax-Schanuel for variations of mixed Hodge structures: mod gamma} also holds for $\dim X=k$.
\end{lemma}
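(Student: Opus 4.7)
The plan is to reduce Theorem \ref{Ax-Schanuel for variations of mixed Hodge structures: mod gamma} to Theorem \ref{Ax-Schanuel for variations of mixed Hodge structures} by passing to a finite étale cover $X'$ of $X$ on which the monodromy is contained in $\tbP(\ZZ)^+$. Set $\Gamma_0 := \Gamma \cap \tbP(\ZZ)^+$. Since $\tbP$ is the identity component of the $\QQ$-Zariski closure of $\Gamma \subset \tbGL(\mc{H}_{\ZZ, \eta})$, the group $\Gamma \cap \tbP$ is of finite index in $\Gamma$ and lies in $\tbP(\ZZ)$; together with $[\tbP(\ZZ):\tbP(\ZZ)^+]<\infty$, this gives $[\Gamma:\Gamma_0]<\infty$. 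Let $f:X' \to X$ be the finite étale cover with $\pi_1(X') \twoheadrightarrow \Gamma_0$; then $\dim X' = k$ and the pulled-back VMHS has monodromy $\Gamma_0 \subset \tbP(\ZZ)^+$ and the same generic Mumford--Tate group, so the hypothesis of Theorem \ref{Ax-Schanuel for variations of mixed Hodge structures} is in force on $X'$. Weakly special subvarieties of $X'$ are exactly the $f$-preimages of those of $X$, so $f$-images of proper weakly special subvarieties of $X'$ are proper weakly special subvarieties of $X$.

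Given $U' \subset W'$ satisfying $\dim U'^{\Zar} - \dim U' < \dim \wc{D}$, let $\wt{U}$ be an irreducible component of the preimage $U' \times_X X' \subset X' \times \mc{M}$. Because $f\times \mathrm{id}: X' \times \wc{\mc{M}} \to X \times \wc{\mc{M}}$ is finite, $\dim \wt{U} = \dim U'$ and the Zariski closure $\wt{U}^{\Zar}$ in $X' \times \wc{\mc{M}}$ satisfies $\dim \wt{U}^{\Zar} = \dim U'^{\Zar}$. Since $\Gamma_0 \subset \tbP(\ZZ)^+$ stabilizes $D$, the preimage of $\Gamma \backslash D$ in $\mc{M}$ decomposes as the disjoint union $\bigsqcup_{[\gamma]\in \Gamma/\Gamma_0} \gamma D$; by irreducibility, $p_{\mc{M}}(\wt{U}) \subset \gamma D$ for some $\gamma\in\Gamma$. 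Replace $\wt{U}$ by $(\mathrm{id},\gamma^{-1})\wt{U} \subset X' \times D$; as $\gamma^{-1}$ acts algebraically on $\wc{\mc{M}}$, dimensions of $\wt{U}$ and of its Zariski closure are unchanged. The fiber-product condition $\Gamma m = \psi'(f(x'))$ with $m\in D$ then forces $\tbP(\ZZ)^+ m = \varphi_{X'}(x')$, so $\wt{U}$ lies in $W_{X'} := X' \times_{\tbP(\ZZ)^+ \backslash D} D$, the fiber product relevant to Theorem \ref{Ax-Schanuel for variations of mixed Hodge structures} on $X'$.

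The compact dual $\wc{D}$ is a projective $\tbP(\CC)$-homogeneous variety, hence Zariski closed in $\wc{\mc{M}}$. Consequently the Zariski closure of $\wt{U}$ in $X' \times \wc{D}$ coincides with the one in $X' \times \wc{\mc{M}}$, and
\[
\dim \wt{U}^{\Zar} - \dim \wt{U} \;=\; \dim U'^{\Zar} - \dim U' \;<\; \dim \wc{D}.
\]
Applying Theorem \ref{Ax-Schanuel for variations of mixed Hodge structures} on $X'$ (available by the inductive assumption at $\dim X = k$), we conclude that $p_{X'}(\wt{U})$ lies in a proper weakly special subvariety $Y' \subset X'$. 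Then $f(Y')$ is a proper weakly special subvariety of $X$ containing $p_X(U')$, completing the reduction.

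The main subtlety is the fiber-product check in the second paragraph: it requires carefully tracking how the quotients $\Gamma \backslash \mc{M}$, $\Gamma_0 \backslash D$, and $\tbP(\ZZ)^+ \backslash D$ relate, and using both the finite-index inclusion $\Gamma_0 \subset \tbP(\ZZ)^+$ and the fact that only $\Gamma_0$-translates of $D$ land back inside $D$ within $\mc{M}$. Once this bookkeeping is done, the étale descent of weakly special subvarieties and the closedness of $\wc{D}$ in $\wc{\mc{M}}$ are essentially formal.
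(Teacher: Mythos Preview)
Your overall strategy matches the paper's: pass to a finite \'etale cover on which the monodromy lies in $\tbP(\ZZ)^+$, identify the two fiber products there, and descend. The paper's version is terser --- after the cover it simply observes $W_{\Gamma}=W'$ and invokes Lemma~\ref{equivalent main statement} --- while you try to spell out the identification directly.

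However, there is a genuine gap in your execution of the second paragraph. You assert that the preimage of $\Gamma\backslash D$ in $\mc{M}$ is the \emph{disjoint} union $\bigsqcup_{[\gamma]\in\Gamma/\Gamma_0}\gamma D$, and later that ``only $\Gamma_0$-translates of $D$ land back inside $D$.'' Concretely, your step ``$\Gamma m=\psi'(f(x'))$ with $m\in D$ forces $\tbP(\ZZ)^+m=\varphi_{X'}(x')$'' amounts to: if $\gamma'\in\Gamma$ satisfies $\gamma'D=D$, then $\gamma'\in\Gamma_0=\Gamma\cap\tbP(\ZZ)^+$. But $\Gamma$ need not lie in $\tbP$; it only has $\tbP$ as the identity component of its Zariski closure. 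Since $\Gamma$ normalizes $\tbP(\RR)^+\tbU(\CC)$, each $\gamma D$ is a full $\tbP(\RR)^+\tbU(\CC)$-orbit, so the $\gamma D$ are pairwise equal or disjoint --- but the stabilizer $\{\gamma\in\Gamma:\gamma D=D\}$ can strictly contain $\Gamma_0$, and nothing you have written rules out an element of $\Gamma\setminus\tbP(\QQ)$ fixing $D$. With such an element present, $m\in D$ and $m\in\Gamma h$ do not force $m\in\tbP(\ZZ)^+h$, so $\wt{U}\subset W_{X'}$ is not established.

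The fix is to decompose at the level of the fiber products rather than in $\mc{M}$. One has
\[
(W'_X)\times_X X' \;=\; \bigsqcup_{[\gamma]\in\Gamma/\Gamma_0}(\mathrm{id},\gamma^{-1})\,W'_{X'},
\]
and this \emph{is} a disjoint union: for each $(x',m)$ there is a unique coset $\gamma\Gamma_0$ with $\psi'_{X'}(x')=\Gamma_0\gamma m$, because $\psi'_{X'}(x')$ is a well-defined point of $\Gamma_0\backslash\mc{M}$. Each piece is open and closed in the union, so your irreducible $\wt{U}$ lies in one of them; translating by that $\gamma$ puts $\wt{U}$ inside $W'_{X'}$. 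Since now $\Gamma_0\subset\tbP(\ZZ)^+$ preserves $D$ and the period lands in $D$, one gets $W'_{X'}=W_{\Gamma_0,X'}\subset W_{X'}$, and you can proceed exactly as you do (the closedness of $\wc{D}$ in $\wc{\mc{M}}$ and the descent of weakly specials are fine). This is precisely the content of the paper's line ``$W_\Gamma=W'$'' together with Lemma~\ref{equivalent main statement}.
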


\begin{proof}
Since $\Gamma\cap \tbP(\QQ)$ is of finite index in $\Gamma$, replacing $X$ by a finite \'{e}tale covering if necessary, we can assume that $\Gamma\subset \tbP(\mb{Z})$. Similarly, we can further assume that $\Gamma\subset \tbP(\mb{R})^+$. 
Then since the image of $\psi'$ is inside $\Gamma\bs D$, $W_{\Gamma}=W'$, and so Theorem \ref{Ax-Schanuel for variations of mixed Hodge structures: mod gamma} follows from Lemma \ref{equivalent main statement}.
\end{proof}

\begin{lemma}\label{Induction on dimension of X}
 If there exists an algebraic subvariety $Z$ of $X$ such that $p_X(U)\subset Z\subsetneq X$, then Theorem \ref{Ax-Schanuel for variations of mixed Hodge structures} holds.
 \end{lemma}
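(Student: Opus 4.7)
The plan is to invoke the outer induction on $\dim X$ by replacing $X$ with a desingularization of $Z$. Since $U$ is irreducible and $p_X$ is continuous, I may assume $Z$ is irreducible; let $\pi\colon \wt{Z}\to Z$ be a resolution of singularities and $f\colon \wt{Z}\to X$ its composition with $Z\hookrightarrow X$. Then $\wt{Z}$ is smooth and irreducible with $\dim \wt{Z}<\dim X$, and pulling back the admissible GPVMHS on $X$ along $f$ gives an admissible GPVMHS on $\wt{Z}$. After passing to a finite \'etale cover (as in the proof of Lemma \ref{equivalent main statement 2}), I may assume its monodromy lies in $\mathbf{P}_{\wt{Z}}(\mb{Z})^+$, where $\mathbf{P}_{\wt{Z}}$ is the algebraic monodromy group of the pullback; let $D_{\wt{Z}}\subseteq D$ and $\wc D_{\wt{Z}}\subseteq \wc D$ denote the associated weak Mumford-Tate domain and its compact dual.

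Using $p_X(U)\subseteq Z$, I lift $U$ to an irreducible analytic component $\wt{U}$ of $(f\times\mathrm{id})^{-1}(U)\cap \wt{W}$, where $\wt{W}$ is the fiber product associated to the pulled-back period map; since $\pi$ is birational, $\dim\wt{U}=\dim U$. Set $\wt{V}=\wt{U}^{\Zar}\subseteq \wt{Z}\times \wc D_{\wt{Z}}$. The crux is to verify $\dim \wt{V}-\dim \wt{U}<\dim \wc D_{\wt{Z}}$ so as to apply the induction hypothesis on $\wt{Z}$: by a direct dimension comparison, the directions of $\wc D$ transverse to $\wc D_{\wt{Z}}$ correspond to degeneracies of the pullback which contribute equally to $\dim\wc D-\dim\wc D_{\wt{Z}}$ and to $\dim V-\dim\wt{V}$, so the strict inequality from the hypothesis persists on $\wt{Z}$.

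The outer induction hypothesis, applied to $\wt{Z}$, then yields a proper weakly special subvariety $Y\subsetneq \wt{Z}$ containing $p_{\wt{Z}}(\wt{U})$, defined by a normal $\mb{Q}$-subgroup $\tbM$ of $\mcMT_h$ for some $h\in D_{\wt{Z}}\subseteq D$. The same $\tbM$ defines a weakly special subvariety $Y'=\varphi^{-1}\pi(D(\tbM))$ of $X$, and compatibility of the two period maps under $f$ gives $p_X(U)=f(p_{\wt{Z}}(\wt{U}))\subseteq f(Y)\subseteq Y'$. The main obstacles are the dimension inequality after shrinking the target and ensuring $Y'\subsetneq X$; for the latter, if $Y'=X$ I would iterate the construction with $Z$ replaced by the algebraic subvariety $f(Y)\subsetneq X$, which terminates because $\dim Z$ strictly decreases, the base case $\dim Z=0$ being handled by the fact that any point of $X$ lies in a weakly special fiber of the period map.
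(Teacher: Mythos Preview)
There is a genuine gap at what you call the crux. You assert that $\dim V-\dim\wt V$ equals $\dim\wc D-\dim\wc D_{\wt Z}$ because ``the directions of $\wc D$ transverse to $\wc D_{\wt Z}$ correspond to degeneracies of the pullback which contribute equally,'' but this is unjustified and false in general. If, say, $U$ already sits inside $Z\times D_{\wt Z}$ --- which is exactly what happens after translating by the appropriate element of $\tbP(\mb Z)^+$, since the lifted period map on $\wt Z$ lands in $D_{\wt Z}$ --- then $V\subset Z\times\ol{\wc D_{\wt Z}}$ and one gets $\dim\wt V=\dim V$ no matter how much smaller $\wc D_{\wt Z}$ is. So $\dim\wt V-\dim\wt U<\dim\wc D_{\wt Z}$ simply does not follow from $\dim V-\dim U<\dim\wc D$. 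A related problem occurs one step earlier: $(f\times\mathrm{id})^{-1}(U)$ lives in $\wt Z\times D$ while $\wt W$ lives in $\wt Z\times D_{\wt Z}$, and you have not explained why their intersection has a component of dimension $\dim U$; it need not, unless $U$ has first been translated into $X\times D_{\wt Z}$.

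The paper sidesteps both issues by a different reduction. Rather than applying Theorem~\ref{Ax-Schanuel for variations of mixed Hodge structures} to a resolution of $Z$, it passes via Lemma~\ref{equivalent main statement} to the equivalent formulation with $W_\Gamma=X\times_{\Gamma\bs\mc M}\mc M$, and then invokes the induction hypothesis in the form of Theorem~\ref{Ax-Schanuel for variations of mixed Hodge structures: mod gamma} for $Z\to\Gamma_Z\bs\mc M$. The point is that $\mc M$ and $\wc{\mc M}$ depend only on the Hodge numbers and graded polarization and hence do not change upon restriction to $Z$; thus $g^{-1}U$ lies in $W'_Z=Z\times_{\Gamma\bs\mc M}\mc M$ tautologically, with no lifting step and no change of ambient space for the Zariski closure. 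Your route might be salvageable by first disposing separately of the case $\tbP_{\wt Z}\subsetneq\tbP$ (where $\varphi^{-1}\pi(D_{\wt Z})$ is already a candidate proper weakly special containing $Z$), but the dimension-comparison step as written does not stand.
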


\begin{proof}
By Lemma \ref{equivalent main statement}, it suffices to prove the statement about $U_{\Gamma}$ in the lemma.
Let  $\Gamma_{Z^{sm}}$ be the monodromy group of the GPVMHS restricted to the smooth locus $Z^{sm}$ of $Z$. 
We have a mapping $Z^{sm}\ra \Gamma_{Z^{sm}}\bs \mc{M}$.
Let $$W_{Z^{sm}}:= {Z^{sm}}\times_{\Gamma_{Z^{sm}}\bs \mc{M}}\mc{M}.$$ 
Since $\dim {Z^{sm}}<\dim X$, by induction hypothesis, Theorem \ref{Ax-Schanuel for variations of mixed Hodge structures: mod gamma} holds for ${Z^{sm}}\ra \Gamma_{Z^{sm}}\bs \mc{M}$.
Let $W'_{Z^{sm}}:= {Z^{sm}}\times_{\Gamma\bs \mc{M}}\mc{M}$. 
Let $S'$ be the set of all distinct representatives of the cosets in $\Gamma/\Gamma_{Z^{sm}}$. 
We have $W'_{Z^{sm}}=\bigcup_{g\in S'}gW_{Z^{sm}}$.
For any irreducible analytic subset $U'_{Z^{sm}}$ of $W'_{Z^{sm}}$, we have $g^{-1}U'_{Z^{sm}}\subset W_{Z^{sm}}$ for some $g\in S'$.
Hence, Theorem \ref{Ax-Schanuel for variations of mixed Hodge structures: mod gamma} holds for ${Z^{sm}}\ra \Gamma\bs \mc{M}$.
Since $U_{\Gamma}:=g_0^{-1}U\subset W_{\Gamma}$,  we know  $U_{\Gamma}\cap p_X^{-1}(Z^{sm})\subset W'_{Z^{sm}}$. 
We can assume $p_X(U)\cap Z^{sm}$ is non-empty.
We also have $p_X(g_0^{-1}U)=p_X(U)\subset Z$.
Then $p_X(U_{\Gamma})\cap Z^{sm}$ is open and dense in $p_X(U)$,
while $U_{\Gamma}\cap p_X^{-1}(Z^{sm})$ is open and dense in $U_{\Gamma}$.
If the identity component $\ol{\Gamma}^\circ_{Z^{sm}}$ of the algebraic monodromy group of $Z^{sm}$ is equal to $\tbP$, then $\dim \wc{D}_{Z^{sm}}=\dim \wc{D}$.
Then since Theorem \ref{Ax-Schanuel for variations of mixed Hodge structures: mod gamma} holds for ${Z^{sm}}\ra \Gamma\bs \mc{M}$, 
the set $p_X(U_{\Gamma})\cap Z^{sm}$ is contained in a proper weakly special subvariety $E_1$.
If $\ol{\Gamma}_{Z^{sm}}^\circ\subsetneq \tbP$, then by Lemma \ref{Andre-Deligne: monodromy and proper weakly special},
$Z^{sm}$ is contained in a proper weakly special subvariety $E_2$. 
The Zariski closures of $p_X(U_{\Gamma})$ and $p_X(U_{\Gamma})\cap Z^{sm}$ in $X$ are the same, and they are contained in $E_1$ or $E_2$ by Lemma \ref{monodromy and smallest weakly special}.
Therefore, the statement about $U_{\Gamma}$ in Lemma  \ref{equivalent main statement} holds.
 \end{proof}

\begin{lemma}\label{algebraic projection}
If $p_X(U)$ is contained in an algebraic subvariety $Z$ and contains an analytic open subset of $Z$, then Theorem \ref{Ax-Schanuel for variations of mixed Hodge structures} holds.
\end{lemma}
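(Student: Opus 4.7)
The strategy is to reduce the statement to Lemma \ref{Induction on dimension of X} via a case analysis on the position of $Z$ inside $X$. First, since $p_X(U)$ is irreducible (being the continuous image of the irreducible set $U$), by replacing $Z$ with its irreducible component containing $p_X(U)$ we may assume $Z$ is irreducible. The dimension equality $\dim p_X(U) = \dim Z$ then forces $Z$ to coincide with the Zariski closure of $p_X(U)$ in $X$.

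If $Z$ is a proper subvariety of $X$, then $p_X(U) \subset Z \subsetneq X$ is exactly the hypothesis of Lemma \ref{Induction on dimension of X}, and the induction hypothesis on $\dim X$ concludes the argument immediately.

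The remaining case is $Z = X$, which means $p_X(U)$ is Zariski dense in $X$. Since the fibers of $W \to X$ are discrete (they are $\tbP(\mb{Z})^+$-orbits of a lift in $D$), we have $\dim U = \dim p_X(U) = \dim X$, so that the third component $\dim X - \dim U$ of the inductive triple $(\dim X, \dim V - \dim U, \dim X - \dim U)$ already attains its minimum value zero. The hypothesis $\dim V - \dim U < \dim \wc{D}$ then becomes $\dim V < \dim X + \dim \wc{D}$, placing $V$ as a proper algebraic subvariety of $X \times \wc{D}$ that contains the Zariski-dense image of $U$.

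The main obstacle lies in this second case: the desired conclusion---namely, that $p_X(U)$ is contained in a proper weakly special subvariety of $X$---is incompatible with $p_X(U)$ being Zariski dense, so one must leverage the induction on the remaining parameters. My plan is to use the induction hypothesis on $\dim V - \dim U$ (with $\dim X - \dim U$ already minimized): by intersecting $V$ with an appropriate algebraic hypersurface in $X \times \wc{D}$ not containing $V$, one produces a new analytic pair $(U', V')$ in the fiber product whose second inductive parameter $\dim V' - \dim U'$ is strictly smaller, and the induction hypothesis combined with Lemma \ref{Induction on dimension of X} concludes. This reduction is expected to be delicate because the slicing must preserve enough of the fiber-product structure to remain in the setting of the theorem.
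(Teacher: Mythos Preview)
Your handling of the case $Z\subsetneq X$ is fine and matches the paper. The gap is in the case $Z=X$. Your proposed slicing argument cannot work: if you intersect $V$ with an algebraic hypersurface $H\subset X\times\wc D$ not containing $V$, then either $U\subset H$, in which case $V=U^{\Zar}\subset H$ after all, or $U\not\subset H$, in which case generically $\dim(U\cap H)=\dim U-1$ and $\dim(V\cap H)=\dim V-1$, so $\dim V'-\dim U'$ does not drop. There is no way to cut $V$ without cutting $U$ by the same amount, precisely because $V$ is the Zariski closure of $U$. So the second inductive parameter is stuck, and your plan stalls.

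The paper instead shows that $\dim p_X(U)=\dim X$ is outright impossible under the hypothesis $\dim V-\dim U<\dim\wc D$, by a monodromy argument. Passing to $g^{-1}U\subset W_\Gamma$ (Lemma~\ref{equivalent main statement}), discreteness of the fibers gives $\dim g^{-1}U=\dim X=\dim W_\Gamma$, and since $W_\Gamma$ is irreducible one gets $g^{-1}U^{\Zar}=W_\Gamma^{\Zar}$. Now $W_\Gamma$ is $\Gamma$-invariant, and $\Gamma$ is $\mb C$-Zariski dense in $\tbP_{\mb C}$ (Lemma~\ref{Galois descent, Zariski closure}), so $V=U^{\Zar}$ is $\tbP_{\mb C}$-invariant. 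Since $\tbP_{\mb C}$ acts transitively on $\wc D$ and $p_X(V)=X$, this forces $V=X\times\wc D$, whence $\dim V-\dim U=\dim\wc D$, contradicting the hypothesis. Hence $\dim p_X(U)<\dim X$ always, and Lemma~\ref{Induction on dimension of X} finishes. You are missing this Zariski-density-of-monodromy step, which is the actual content of the lemma.
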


\begin{proof}
Suppose $p_X(U)$ contains an analytic open subset of $X$. 
Then $p_X(g_0^{-1}U)$ contains an analytic open subset of $X$.
Since $g_0^{-1}U\subset W_{\Gamma}$ and $\Gamma$ acts discretely on $D$, we have
$\dim g_0^{-1}U=\dim W_{\Gamma}.$
Since $W_{\Gamma}$ is irreducible, we then have $W^{\Zar}_{\Gamma}=g_0^{-1}U^{\Zar}$. 
Since $\Gamma$ is $\QQ$-Zariski dense in $\tbP$, we know   $\Gamma$ is $\mb{C}$-Zariski dense in $\tb{P}_{\mb{C}}$ by Lemma \ref{Galois descent, Zariski closure}. 
Then since $W_{\Gamma}$ is invariant under $\Gamma$,  we know $U^{\Zar}$ is invariant under $\tb{P}_{\mb{C}}$.  Moreover, $p_X(U^{\Zar})$ contains an analytic open subset of $X$. Then since $\tb{P}_{\mb{C}}$ acts transitively on $\widecheck{D}$, we know $V:=U^{\Zar}$ contains $X^{\circ}\times \widecheck{D}$, where $X^\circ$ is an open subset of $X$. Then by assumption, 
 $\dim U>\dim V-\dim \wc{D}=\dim X=\dim W$, which is a contradiction. Therefore, $p_X(U)$ does not contain any analytic open subset of $X$. If $p_X(U)$ is contained in an algebraic subvariety $Z$ and contains an analytic open subset of $Z$, then $Z\subsetneq X$. By Lemma \ref{Induction on dimension of X}, Theorem \ref{Ax-Schanuel for variations of mixed Hodge structures} holds .
\end{proof}

Since $U\subset W$, the fibers of $(p_X)|_U$ are discrete sets, so if $\dim V=\dim U$, then $\dim p_X(V)=\dim p_X(U)$, and thus $p_X(U)$ contains an analytic open subset of $p_X(V)$. Then since $p_X(V)$ is algebraic, Theorem \ref{Ax-Schanuel for variations of mixed Hodge structures} holds by Lemma \ref{algebraic projection}.

Similarly, if $\dim X=\dim U$, then $\dim X=\dim U=\dim p_X(U)$,  Theorem \ref{Ax-Schanuel for variations of mixed Hodge structures} thus holds by Lemma \ref{algebraic projection}.

\section{Retraction of the mixed weak Mumford-Tate domain}\label{Real split locus of domains}
The motivation of this section is explained in the idea of proof (Section \ref{Idea of proof}).  
A subset of $\RR^n$ is said to be \emph{definable} if it is definable in the o-minimal structure $\RR_{an,\exp}$ \cite{DM}. We refer to \cite[Section 2]{JW} for an introduction to o-minimality.

The Mumford-Tate groups of mixed Hodge structures in the variation attain maximal dimension and are isomorphic to each other for all $x$ outside a countable union (called the Hodge locus) of proper irreducible analytic varieties of $X$ \cite[Lemma 4]{A}. We say these Mumford-Tate groups are \emph{Hodge generic}. If $x$ is in the Hodge locus, then the Mumford-Tate group is strictly smaller than the generic one. Points outside the Hodge locus are also called \emph{Hodge generic points}.

Let $\mc{S}(W_{\bullet})$ be the variety of splittings of the weight filtration of $h_0$ \cite{BBKT}\cite{CKS}, where $h_0$ is the mixed Hodge structure at the Hodge generic point $\eta$ chosen in Section \ref{Statement of results}.
Let $\mcMT_u$ be the unipotent radical of the Mumford-Tate group $\mcMT:=\mcMT_{h_0}$ of $h_0$. 
Let $D^+_{\mcMT}$ be the $\mcMT(\mb{R})^+\mcMT_u(\mb{C})$-orbit of $h_0$ in the space $\mc{M}$ defined in Section \ref{Statement of results}. 
By taking $\tbM=\mcMT_u$ in Appendix \ref{Appendix}, we obtain a mixed Hodge structure $\ol{h_0}$ (\emph{cf.} Lemma \ref{quotient of mixed hodge structure}) and a pure Mumford-Tate domain $D_{\mcMT/\mcMT_u}$ with an $(\mcMT/\mcMT_u)(\RR)$-action on it. We write $D_{\mcMT, \text{Gr}}:=D_{\mcMT/\mcMT_u}$ and call it the Mumford-Tate domain for the associated graded $\ol{h_0}$.  Let $D_{\mcMT,\Gr}^+$ be the connected component of $D_{\mcMT,\Gr}$ containing $\ol{h_0}$.

Let $D_{\mcMT, \mb{R}}^+$ be the real semi-algebraic subset of $D_{\mcMT}^+$ consisting of those Hodge filtrations for which the corresponding mixed Hodge structure is real-split. There is an $\mcMT(\mb{R})^+$-equivariant retraction $r:  D_{\mcMT}^+\ra D_{\mcMT, \mb{R}}^+$ (\cite[Lemma 6.6]{BBKT}). Let $D_{\mcMT_u,\mb{R}}$ be the $\mcMT(\mb{R})^+$-orbit in $\mc{S}(W_{\bullet})(\mb{R})$ of the splitting of $h_1:=r(h_0)$. The unipotent radical $\mcMT_u(\mb{R})$ acts simply transitively on $D_{\mcMT_u,\mb{R}}$ \cite[Prop. 2.2]{CKS}, see also \cite[p. 13]{BBKT} or \cite[Lemma 3.11]{C1}. Let $h_{\text{Gr}}$ denote the image of any $h\in D_{\mcMT}^+$ under the map $D_{\mcMT}^+\ra D_{\mcMT,\text{Gr}}^+$ (\emph{cf.} Appendix \ref{Appendix}).  

\begin{notation}\label{Levi decomposition of the retraction}
Recall $\mc{M}$ in Section \ref{Statement of results}. Then $D_{\mcMT}^+\subset \mc{M}$. Also, $D^+_{\mcMT,\mb{R}}$ is a subset of the real-split locus $\mc{M}_{\mb{R}}$ of $\mc{M}$. There is a product $\Omega=\prod \Omega_k$, where each $\Omega_k$ parametrizes pure Hodge structures having the $k$-th polarization and Hodge numbers of weight $k$ that we fixed. For each $h\in \mc{M}_{\mb{R}}$, there exists a unique Deligne bigrading \cite{D}, which gives a splitting $h_s\in S(W_{\bullet})(\mb{R})$. Let $h_{\text{Gr}}$ be the associated graded of $h$.  Then we have a bijection $\mc{M}_{\mb{R}}\simeq \Omega\times S(W_{\bullet})(\mb{R})$ sending $h$ to $(h_{\text{Gr}},h_s)$. Let  $i: D_{\mcMT,\mb{R}}^+\simeq D_{\mcMT,\text{Gr}}^+\times D_{\mcMT_u,\mb{R}}$ be the restriction of this bijection to $D_{\mcMT,\mb{R}}^+$. The map $i$ is definable, and it is compatible with the $\mcMT(\mb{R})^+$-action, i.e. $i(m h)=(\ol{m} h_{\text{Gr}}, mh_s)$ for any $m\in \mcMT(\mb{R})^+$ and $h\in D_{\mcMT,\mb{R}}^+$, where $\ol{m}$ is the image of $m$ in $(\mcMT/\mcMT_u)(\RR)$. 
For any $h\in D_{\mcMT,\mb{R}}^+$, we also denote by $(h_\text{Gr}, h_s)$ the image of $h$ under $i$.
$\hfill\square$
\end{notation}

Recall $D$ and $\tbP$ from Section \ref{Statement of results}.
Let $D_{\text{Gr}}$ be the image of $r(D)$ under the  $\mcMT(\mb{R})^+$-equivariant  map $D_{\mcMT,\mb{R}}^+\ra D_{\mcMT,\text{Gr}}^+$.  By the equivariance of the map $D_{\mcMT,\mb{R}}^+\ra D_{\mcMT,\text{Gr}}^+$, the $\tbP(\mb{R})^+$-action on $r(D)$ induces a  $\tbP(\mb{R})^+$-action on $D_{\text{Gr}}$. Let $D_{\tbU,\mb{R}}$ be the image of $r(D)$ under the projection $D_{\mcMT,\mb{R}}^+\ra D_{\mcMT_u,\mb{R}}$. Similarly, we have a $\tbP(\mb{R})^+$-action on $D_{\tbU,\mb{R}}$.

 Let $\tbN$ be the identity component of the $\mb{Q}$-Zariski closure of 
 $$Stab(V):=\{\sigma\in \tb{P}(\mb{Z})^+: \sigma V=V\}$$
  in $\tb{P}$, where $\tbP(\ZZ)^+:=\tbP(\mb{Z})\cap \tbP(\mb{R})^+$. 
 Fix a Levi subgroup $\tbN_r$ of $\tbN$. Let $\tbG:=\tbP_r$ be a maximal connected reductive subgroup  of $\tbP$ containing $\tbN_r$. Then $\tbG$ is a Levi subgroup of $\tbP$. Similarly, we can choose Levi subgroup $\mcMT_r$ of $\mcMT$ containing $\tbG$. 
We have an isomorphism $\mcMT_r\simeq \mcMT/\mcMT_u$, given by inclusion $\mc{MT}_r\ra \mc{MT}$ followed by the quotient map $\mcMT\ra \mcMT/\mcMT_u$.
Let $\tbP_u$ be the unipotent radical of $\tbP$.

\begin{lemma}\label{retraction of D is transitive}
The group $\tbG(\mb{R})^+:=\tbP_r(\mb{R})^+$ acts transitively on $D_{\Gr}$. The group $\tbU(\mb{R}):=\tbP_u(\mb{R})$ acts simply transitively on $D_{\tbU,\mb{R}}$. 
\end{lemma}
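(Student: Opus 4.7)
My plan is to combine the $\mcMT(\RR)^+$-equivariance of the retraction $r$ and of the isomorphism $i$ with Andr\'{e}'s normality theorem, which forces the unipotent radical $\tbU$ of $\tbP$ to equal $\tbP \cap \mcMT_u \subset \mcMT_u$, so that the chosen Levi $\tbG$ of $\tbP$ maps isomorphically onto the image of $\tbP$ in $\mcMT/\mcMT_u$. The backbone of the argument is the identity $r(D) = \tbP(\RR)^+ \cdot h_1$ where $h_1 := r(h_0)$; from this, all three transitivity claims will follow by applying $i$.

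The $\supseteq$ inclusion in the central identity is immediate from the $\mcMT(\RR)^+$-equivariance of $r$. For $\subseteq$, I would write an arbitrary $h \in D$ as $h = g u h_0$ with $g \in \tbP(\RR)^+$ and $u \in \tbU(\CC)$, so that $r(h) = g \cdot r(u h_0)$ by equivariance. The technical heart is the sub-claim $r(u h_0) \in \tbU(\RR) \cdot h_1$: it amounts to showing that the CKS real-splitting operation, which expresses $u h_0 = e^X h_1$ with $X \in W_{-1}\Lie(\mcMT_u)_\CC$ and produces the real-split retraction via a polynomial formula in $X$ and $\bar X$, respects the subgroup $\tbU \subset \mcMT_u$, i.e., if $X \in \Lie(\tbU)_\CC$ then the retraction element lies in $\tbU(\RR)$.

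Once the central identity is established, I project through $i$. On the associated-graded factor, $r$ preserves the associated graded and $\tbU(\CC)$ acts trivially on it, so the image of $D$ (equivalently of $r(D)$) is the $\tbG(\RR)^+$-orbit of $h_{0,\Gr}$, giving transitivity of $\tbG(\RR)^+$ on $D_\Gr$. On the splitting factor, I obtain $D_{\tbU,\RR} = \tbP(\RR)^+ \cdot h_{1,s}$; choosing $\tbG$ (equivalently adjusting $h_1$ within the $\tbU(\RR)$-orbit of $r(h_0)$) so that $\tbG$ lies in the stabilizer of $h_{1,s}$, this reduces to $\tbU(\RR) \cdot h_{1,s}$, with simple transitivity inherited from $\tbU(\RR) \subset \mcMT_u(\RR)$ via the CKS result that $\mcMT_u(\RR)$ acts simply transitively on $D_{\mcMT_u,\RR}$. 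The third statement is the central identity itself. The main obstacle is the sub-claim $r(uh_0) \in \tbU(\RR) h_1$, which requires a careful unwinding of the CKS polynomial construction to verify its compatibility with the algebraic subgroup $\tbU \subset \mcMT_u$.
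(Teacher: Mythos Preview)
Your approach is genuinely different from the paper's, and parts of it are attractive. For the first claim about $D_{\Gr}$, your observation that $r$ preserves the associated graded (because the CKS element $\delta$ lies in $W_{-2}$) and that $\tbU(\CC)$ acts trivially there gives the identification $D_{\Gr}=\tbG(\RR)^+\cdot h_{0,\Gr}$ immediately; this is cleaner than the paper's argument, which instead passes to the quotient datum $(\mcMT/\tbP,\mc{X}_{\mcMT/\tbP},D_{\mcMT/\tbP}^+)$, shows $D_{\Gr}$ maps to a single point under $D_{\mcMT,\Gr}^+\to D_{\mcMT/\tbP,\Gr}^+$, and then runs a submersion/open-orbit argument on tangent spaces.

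However, there are two genuine gaps. First, your central sub-claim $r(u h_0)\in \tbU(\RR)\cdot h_1$ for $u\in\tbU(\CC)$ is left as an obstacle, and it is not clear that unwinding the CKS polynomial actually delivers it: the element $\delta(uh_0)$ lives in $\Lambda^{-1,-1}$ of the full endomorphism algebra relative to the bigrading of $uh_0$, and you would need to show the resulting group element lands in $\tbU(\RR)$ modulo the stabilizer, which is not automatic from the formula. The natural way to see this is precisely via functoriality of the retraction under the quotient $\mcMT\to\mcMT/\tbP$ (so $f_\RR\circ r=r_{\mcMT/\tbP}\circ f$ forces $r(uh_0)$ and $h_1$ into the same fiber), and that is exactly the paper's method.

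Second, even granting the central identity $r(D)=\tbP(\RR)^+\cdot h_1$, your deduction of simple transitivity on $D_{\tbU,\RR}$ has a hole. You want to pass from $\tbP(\RR)^+\cdot h_{1,s}$ to $\tbU(\RR)\cdot h_{1,s}$ by arranging $\tbG\subset\mathrm{Stab}(h_{1,s})$, but neither $\tbG$ nor $h_1=r(h_0)$ is free to adjust: $\tbG$ is a fixed $\QQ$-Levi of $\tbP$ (chosen before this lemma to contain $\tbN_r$), while $\mathrm{Stab}(h_{1,s})$ is only an $\RR$-Levi of $\mcMT$, and there is no reason a $\QQ$-Levi of $\tbP$ can be found inside it. Replacing $h_1$ by some $u_0 h_1$ with $u_0\in\tbU(\RR)$ does not help, since the required conjugating element generally lies in $\mcMT_u(\RR)\setminus\tbU(\RR)$. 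The paper circumvents this by again using the quotient: it shows $D_{\tbU,\RR}$ maps to a point in $D_{(\mcMT/\tbP)_u,\RR}$ and then uses that $\tbU(\RR)$ is precisely the kernel of $\mcMT_u(\RR)\to(\mcMT/\tbP)_u(\RR)$, together with simple transitivity upstairs, to conclude.

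In short, the paper's quotient argument is doing real work that your direct approach does not replace without further input; filling your gaps essentially reconstructs that argument.
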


\begin{proof}
By Andr\'{e} \cite[Proof of Theorem 1]{A}, $\tbP$ is normal in $\mcMT$. 
Taking $\tbM=\tbP$ in Appendix \ref{Appendix}, we obtain a quotient morphism 
$$f: (\mcMT, \mc{X}_{\mcMT}, D_{\mcMT}^+)\ra (\mcMT/\tbP, \mc{X}_{\mcMT/\tbP}, D_{\mcMT/\tbP}^+)$$
between connected mixed Hodge data.   
Let $D_{\mcMT/\tbP,\mb{R}}^+$ be the real split locus of $D_{\mcMT/\tbP}^+$. By \cite[Lemma 6.6]{BBKT}, we have an $(\mcMT/\tbP)(\mb{R})^+$-equivariant retraction $r_{\mcMT/\tbP}: D_{\mcMT/\tbP}^+\ra D_{\mcMT/\tbP,\mb{R}}^+$, satisfying $r_{\mcMT/\tbP}\circ f=f_{\mb{R}}\circ r$, where $f_{\mb{R}}: D_{\mcMT,\mb{R}}^+\ra D_{\mcMT/\tbP,\mb{R}}^+$ is the morphism induced from $f$ by restriction. The weak Mumford-Tate domain $D$ is in the fiber $f^{-1}(x)$ of a point $x\in D_{\mcMT/\tbP}^+$. Then $f_{\mb{R}}(r(D))=\{r_{\mcMT/\tbP}(x)\}$. 

By \cite[Corollary 14.11]{Bor}, $\mcMT_u\tbP/\tbP$ is the unipotent radical of $\mcMT/\tbP$. By \cite[Prop. 2.13]{G2}, $\tbP_r=\tbP\cap  \mcMT_r$. Then $\mcMT_r/\tbP_r$ is isomorphic to $\mcMT_r\tbP/\tbP$, a Levi subgroup of $\mcMT/\tbP$.

 Let $g_1: D_{\mcMT,\text{Gr}}^+\ra  D_{\mcMT/\tbP,\text{Gr}}^+$ be the quotient morphism obtained by pushing forward representations of the Deligne torus via $\mcMT_r\ra \mcMT_r/\tbP_r$, \emph{cf.} Appendix \ref{Appendix}. 
 By the previous paragraph about Levi subgroups,  we have the following commutative diagram:
\begin{center}
\begin{tikzcd}
r(D)\arrow[r, hook]\arrow[d] & D_{\mcMT, \mb{R}}^+ \arrow[d]\arrow[r, "f_{\mb{R}}"] & D_{\mcMT/\tbP, \mb{R}}^+\arrow[d]
\\D_{\text{Gr}}\arrow[r, hook] &  D_{\mcMT,\text{Gr}}^+\arrow[r, "g_1"]& D_{\mcMT/\tbP,\text{Gr}}^+.
\end{tikzcd}
\end{center}
The conclusion in the first paragraph that $f_{\mb{R}}(r(D))=\{r_{\mcMT/\tbP}(x)\}$ then implies that $g_1$ maps $D_{\text{Gr}}$ to a point $y$, which is the associated graded of $r_{\mcMT/\tbP}(x)$. 
By Lemma \ref{D and fiber of quotient}, the group $\tbP_r(\mb{R})^+$ acts transitively on $D_{\text{Gr}}$.
 
 By Notation \ref{Levi decomposition of the retraction}, we have an isomorphism $D_{\mcMT/\tbP,\mb{R}}^+\simeq D_{\mcMT/\tbP,\text{Gr}}^+\times D_{(\mcMT/\tbP)_u,\mb{R}}$, where $D_{(\mcMT/\tbP)_u,\mb{R}}$ is the $(\mcMT/\tbP)_u(\mb{R})$-orbit of the splitting of $f_{\mb{R}}(h_1)$. Write as $(h_{\text{Gr}}, h_s)$ the image of any $h\in  D_{\mcMT/\tbP,\mb{R}}^+$ under this isomorphism. Define a map $g_2: D_{\mcMT_u,\mb{R}}\ra D_{(\mcMT/\tbP)_u,\mb{R}}$ by $g_2(u\cdot {h_1}_s)= f_{\mb{R}}(u\cdot h_1)_s$.
We have the following commutative diagram
\begin{center}
\begin{tikzcd}
r(D)\arrow[r, hook]\arrow[d] & D_{\mcMT, \mb{R}}^+ \arrow[d]\arrow[r, "f_{\mb{R}}"] & D_{\mcMT/\tbP, \mb{R}}^+\arrow[d]
\\D_{\tbU,\mb{R}}\arrow[r, hook] &  D_{\mcMT_u,\mb{R}}\arrow[r, "g_2"]& D_{(\mcMT/\tbP)_u,\mb{R}}.
\end{tikzcd}
\end{center}
Since  $f_{\mb{R}}(r(D))=\{r_{\mcMT/\tbP}(x)\}$, we know $g_2$ maps $D_{\tbU,\mb{R}}$ to a point. Then since $(\mcMT/\tbP)_u(\mb{R})$ acts simply transitively on $D_{(\mcMT/\tbP)_u,\mb{R}}$ \cite[Prop. 2.2]{CKS} (see also \cite[p. 13]{BBKT} or \cite[Lemma 3.11]{C1}) and since  $\tbP_u=\tbP\cap  \mcMT_u$ \cite[Prop. 2.13]{G2}, the group $\tbU(\mb{R}):=\tbP_u(\mb{R})$ acts transitively on $D_{\tbU,\mb{R}}$. This action is simply transitive because $\mcMT_u(\mb{R})$ acts simply transitively on $D_{\mcMT_u,\mb{R}}$.
\end{proof}

\begin{thm}\label{splitting of retraction of weak mixed domain}
The group $\tbP(\mb{R})^+$ acts transitively on $r(D)$.
The isomorphism $i$ restricts to a $\tbP(\mb{R})^+$-equivariant definable isomorphism $j: r(D)\simeq D_{\Gr}\times D_{\tbU,\mb{R}}$.
\end{thm}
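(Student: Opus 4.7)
The plan is to verify that $j := i|_{r(D)}$ is a well-defined definable bijection onto $D_{\Gr}\times D_{\tbU,\mb{R}}$. The containment $i(r(D))\subseteq D_{\Gr}\times D_{\tbU,\mb{R}}$ is immediate from the definitions of $D_{\Gr}$ and $D_{\tbU,\mb{R}}$ as the two projections of $r(D)$, and the restriction is injective because $i$ is. So the only substantive point is surjectivity onto the product.

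For surjectivity I would fix the base point $h_1=r(h_0)\in r(D)$ and write $i(h_1)=(h_{1,\Gr}, h_{1,s})$. Given $(a,b)\in D_{\Gr}\times D_{\tbU,\mb{R}}$, apply Lemma \ref{retraction of D is transitive}: the transitivity of $\tbG(\mb{R})^+$ on $D_{\Gr}$ produces $g\in\tbG(\mb{R})^+$ with $\ol{g}\cdot h_{1,\Gr}=a$, where $\ol{g}$ denotes the image of $g$ in $\mcMT/\mcMT_u$ under the fixed identification $\mcMT_r\simeq \mcMT/\mcMT_u$ (which makes sense because $\tbG=\tbP_r\subset \mcMT_r$). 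The equivariance $i(mh)=(\ol{m}h_{\Gr}, mh_s)$ applied to $m=g$ gives $i(gh_1)=(a, g\cdot h_{1,s})$, and since $g\cdot h_{1,s}\in D_{\tbU,\mb{R}}$, the simple transitivity of $\tbU(\mb{R})$ on $D_{\tbU,\mb{R}}$ produces $u\in\tbU(\mb{R})$ with $u\cdot(g\cdot h_{1,s})=b$. Since $u$ is unipotent, $\ol{u}=1$, so
\[
i(ugh_1)=(\ol{u}\ol{g}\cdot h_{1,\Gr},\, ug\cdot h_{1,s})=(a,b).
\]
Finally, $ug\in\tbP(\mb{R})^+$ and the transitivity of $\tbP(\mb{R})^+$ on $r(D)$ ensures $ugh_1\in r(D)$, so $(a,b)$ lies in the image of $j$.

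For the definability clause, $i$ is a definable isomorphism, so its restriction $j$ is definable; bijectivity established above together with the o-minimality of $\RR_{an,\exp}$ then yields definability of $j^{-1}$ (or one can simply observe that $j^{-1}$ is the restriction of the definable $i^{-1}$ to the definable set $D_{\Gr}\times D_{\tbU,\mb{R}}$). The main obstacle in the argument is purely bookkeeping, namely keeping straight the identification $\tbG\subset\mcMT_r\simeq\mcMT/\mcMT_u$ so that the equivariance formula for $i$ can be used to produce the required preimage; once that identification is unpacked, surjectivity follows mechanically from the three transitivity statements in Lemma \ref{retraction of D is transitive}.
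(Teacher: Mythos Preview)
Your proof is correct and follows essentially the same approach as the paper's own argument. Both proofs reduce surjectivity to the transitivity statements of Lemma \ref{retraction of D is transitive}; the only cosmetic difference is that the paper first writes $y=u\,h_{1,s}$ and then introduces an auxiliary $u'\in\tbU(\mb{R})$ with $g^{-1}h_{1,s}=u'h_{1,s}$ (taking $ugu'h_1$ as the preimage), whereas you absorb this correction into a single choice of $u$ with $u\cdot(g\,h_{1,s})=b$ (taking $ugh_1$ as the preimage), which is slightly more direct. The one step you leave implicit---that $g\cdot h_{1,s}\in D_{\tbU,\mb{R}}$---is immediate since $gh_1\in r(D)$ and $D_{\tbU,\mb{R}}$ is by definition the projection of $r(D)$.
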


\begin{proof}
It is clear that $j$ is $\tbP(\RR)^+$-equivariant.
Let  $(x, y)\in D_{\text{Gr}}\times D_{\tbU,\mb{R}}$. By Lemma \ref{retraction of D is transitive}, $x= g {h_1}_{\text{Gr}}$ and $y=u{h_1}_s$ for some $g\in \tbG(\mb{R})^+$ and $u\in \tbU(\mb{R})$. There exists $u^\prime \in \tbU(\mb{R})$ such that $g^{-1}{h_1}_s=u^\prime {h_1}_s$. Then $j(ugu^\prime h_1)=(ugu^\prime {h_1}_{\text{Gr}}, ug u^\prime {h_1}_s)=(g{h_1}_{\text{Gr}}, u{h_1}_s)=(x,y)$ since the unipotent radical acts trivially on the associated graded. Therefore, $j$ is surjective, and the group $\tbP(\RR)^+$ acts transitively on $r(D)$. Since $i$ and $r(D)$ are definable, $j$ is definable.
\end{proof}

\section{Normality of algebraic stabilizer in algebraic monodromy group}\label{Normality of the stabilizer}

\subsection{A temporary definable fundamental set $\Phi'$ for $\tb{P}(\mb{Z})^+\bs D$}\label{temporary definable fundamental set}

The definable fundamental set for $\tbP(\mb{Z})^+\bs D$ in this section is temporary because later on when we define the definable set $I$ in Section \ref{definable subset I of P(R)}, we will switch to another fundamental set in Section \ref{constructing definable fundamental set} that depends on the normality we prove in this section. 

\begin{definition}[\cite{BBKT}]
Let $Y$ be a definable locally compact subset in $\RR^n$ and $\Gamma$ a group acting on $Y$ by definable homeomorphism. A subset $F$ in $Y$ is a \textbf{fundamental set} for $\Gamma\bs Y$ if $\Gamma\cdot F=Y$ and the set $\{g\in \Gamma: F\cap gF\neq \varnothing\}$ is finite.
\end{definition}

The intersection $\Lambda:=\tb{P}(\mb{Z})^+\cap\tbG(\mb{Q})$ is an arithmetic subgroup containing $\tb{G}(\mb{Z})^+:=\tbG(\ZZ)\cap \tbG(\RR)^+$. Let $\Phi_{\tb{G}}$ be a definable open fundamental set for the action of $\Lambda$ on $D_{\text{Gr}}$ \cite[Theorem 1.1]{BKT}. Let $\Phi_{\tb{U}}$ be a bounded definable open fundamental set for the cocompact action of $\tb{U}(\mb{Z})$ on  $D_{\tb{U},\mb{R}}$ \cite[Lemma 4.7 (1)]{PR}.

\begin{lemma}\label{product of fundamental sets is a fundamental set}
The set $\Phi_{\mb{R}}^\prime:=j^{-1}(\Phi_{\tbG}\times \Phi_{\tbU})$ is a definable open fundamental set for the action of $\tbP(\mb{Z})^+$ on $r(D)$. Hence $\Phi^\prime:=r^{-1}(\Phi_{\mb{R}}^\prime)$ is a definable open fundamental set for the action of $\tbP(\mb{Z})^+$ on $D$.
\end{lemma}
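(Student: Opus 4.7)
The plan is to verify both assertions by transporting everything through the definable isomorphism $j:r(D)\simeq D_{\tbG}\times D_{\tbU,\mb{R}}$ (note: I am writing $D_{\tbG}$ for $D_{\Gr}$ to keep notation aligned with the Levi decomposition) and then pulling back along $r$. First, openness and definability of $\Phi_{\mb{R}}^\prime$ are automatic, since $j$ is a definable homeomorphism (Theorem \ref{splitting of retraction of weak mixed domain}) and $\Phi_{\tbG}\times \Phi_{\tbU}$ is open and definable in the product. Under $j$, the action of $\gamma\in\tbP(\mb{R})^+$ becomes $\gamma\cdot(h_{\Gr},h_s)=(\ol{\gamma} h_{\Gr},\gamma h_s)$, where $\ol{\gamma}$ is the image of $\gamma$ in $\tbG(\mb{R})^+$ via the Levi decomposition $\tbP=\tbU\rtimes\tbG$. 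In particular $\tbU(\mb{Z})\subset\tbP(\mb{Z})^+$ acts trivially on the first coordinate.

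For the covering property $\tbP(\mb{Z})^+\cdot\Phi_{\mb{R}}^\prime=r(D)$, let $h\in r(D)$ with $j(h)=(h_{\Gr},h_s)$. Since $\Phi_{\tbG}$ is a fundamental set for $\Lambda$ on $D_{\tbG}$ and $\Lambda$ is the image of $\tbP(\mb{Z})^+$ in $\tbG(\mb{Q})$, there is $\gamma_1\in\tbP(\mb{Z})^+$ with $\ol{\gamma_1}h_{\Gr}\in\Phi_{\tbG}$. Writing $j(\gamma_1 h)=(h_{\Gr}',h_s')$, one has $h_{\Gr}'\in\Phi_{\tbG}$. Now choose $u\in\tbU(\mb{Z})$ with $uh_s'\in\Phi_{\tbU}$; then $j(u\gamma_1 h)=(h_{\Gr}',uh_s')\in\Phi_{\tbG}\times\Phi_{\tbU}$, as desired.

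For the finiteness of $\{\gamma\in\tbP(\mb{Z})^+:\gamma\Phi_{\mb{R}}^\prime\cap\Phi_{\mb{R}}^\prime\neq\varnothing\}$, project: if $\gamma\Phi_{\mb{R}}^\prime\cap\Phi_{\mb{R}}^\prime\neq\varnothing$, then $\ol{\gamma}\Phi_{\tbG}\cap\Phi_{\tbG}\neq\varnothing$, so $\ol{\gamma}$ lies in a finite subset $F_1\subset\Lambda$ (by the fundamental-set property for $\Phi_{\tbG}$). Fix lifts $\gamma_0\in\tbP(\mb{Z})^+$ of each $\ol{\gamma}\in F_1$; writing $\gamma=u\gamma_0$ forces $u\in\tbU(\mb{R})\cap\tbP(\mb{Z})^+=\tbU(\mb{Z})$. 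Identify $D_{\tbU,\mb{R}}$ with $\tbU(\mb{R})$ by choosing a basepoint via the simply transitive $\tbU(\mb{R})$-action. Under this identification, the action of $\gamma_0$ on $D_{\tbU,\mb{R}}$ is of the form $v\mapsto (\gamma_0 v\gamma_0^{-1})\cdot w_{\gamma_0}$, a composition of conjugation by $\gamma_0$ and left translation by a fixed element $w_{\gamma_0}\in\tbU(\mb{R})$. Since conjugation is a continuous automorphism of $\tbU(\mb{R})$ and $\Phi_{\tbU}$ is bounded, $\gamma_0\Phi_{\tbU}$ is bounded; hence $\{u\in\tbU(\mb{Z}):u\gamma_0\Phi_{\tbU}\cap\Phi_{\tbU}\neq\varnothing\}$ is the intersection of a bounded subset of $\tbU(\mb{R})$ with the discrete set $\tbU(\mb{Z})$, and so is finite. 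Finiteness of $F_1$ concludes.

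Finally, the assertion about $\Phi^\prime:=r^{-1}(\Phi_{\mb{R}}^\prime)$ follows formally from the $\tbP(\mb{R})^+$-equivariance and definability of $r$. Openness, definability, and covering transfer directly: $\tbP(\mb{Z})^+\cdot\Phi^\prime=\bigcup_\gamma r^{-1}(\gamma\Phi_{\mb{R}}^\prime)=r^{-1}\bigl(\tbP(\mb{Z})^+\cdot\Phi_{\mb{R}}^\prime\bigr)=r^{-1}(r(D))=D$. For finiteness, $\gamma\Phi^\prime\cap\Phi^\prime=r^{-1}(\gamma\Phi_{\mb{R}}^\prime\cap\Phi_{\mb{R}}^\prime)$, which is nonempty iff $\gamma\Phi_{\mb{R}}^\prime\cap\Phi_{\mb{R}}^\prime\neq\varnothing$, reducing the claim to the preceding case. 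The only genuine subtlety is the conjugation/boundedness step in the finiteness argument, where one must be careful that the Levi part $\ol{\gamma}$ of an element $\gamma\in\tbP(\mb{Z})^+$ does not act on the unipotent factor $D_{\tbU,\mb{R}}$ merely by translation; handling this via conjugation on $\tbU(\mb{R})$ is the core of the proof.
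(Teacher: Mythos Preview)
Your proof is correct and follows essentially the same route as the paper's: verify covering by first moving the $D_{\Gr}$-coordinate into $\Phi_{\tbG}$ via an element of $\tbP(\mb{Z})^+$ projecting to $\Lambda$ and then the $D_{\tbU,\mb{R}}$-coordinate into $\Phi_{\tbU}$ via $\tbU(\mb{Z})$, and verify finiteness by projecting to $\Lambda$ and then using boundedness of $\Phi_{\tbU}$. The only cosmetic difference is that the paper decomposes $\gamma=\gamma_{\tbG}\gamma_{\tbU}$ with $\gamma_{\tbG}\in\Lambda$ and $\gamma_{\tbU}\in\tbU(\mb{Q})$, whereas you decompose $\gamma=u\gamma_0$ with $\gamma_0\in\tbP(\mb{Z})^+$ a fixed lift and $u\in\tbU(\mb{Z})$; your explicit identification of the $\gamma_0$-action on $D_{\tbU,\mb{R}}$ as conjugation-plus-translation makes overt what the paper leaves implicit in the phrase ``$\gamma_{\tbG}^{-1}\Phi_{\tbU}$ overlaps with only finitely many translates of $\Phi_{\tbU}$.''
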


\begin{proof}
This follows from \cite[Lemma 4.7 (2)]{PR}.
\end{proof}

\subsection{Normality of algebraic stabilizer in algebraic monodromy group}
Recall that $\tbN$ is the identity component of the $\mb{Q}$-Zariski closure of $Stab(V):=\{\sigma\in \tb{P}(\mb{Z})^+: \sigma V=V\}$ in $\tb{P}$. We apply the Hilbert scheme argument to prove that $\tbN$ is normal in $\tb{P}$. This argument was used in  \cite{BT}, \cite{G}, and  \cite{MPT}. 

Let $(X\times \wc{D})'$ be a projective compactification of $X\times \wc{D}$.
Let $M$ be the Hilbert scheme of all subvarieties of $(X\times \widecheck{D})'$ with the same Hilbert polynomial as $V'$, where $V'$ is the Zariski closure of $V$ in $(X\times \wc{D})'$. Let $\mc{V}\ra M$ be the universal family over $M$, with a natural embedding $\mc{V}\hookrightarrow (X\times \widecheck{D})'\times M$. Let $\mc{V}_W:=\mc{V}\cap (W\times M)$. Each $m\in M$ corresponds to a subvariety called $V_m$. Write $m=[V_m]$.

Let $T$ be the set of all pairs $(p,m)\in W\times M$, such that  $V_m\cap W$ has dimension at least $\dim U$ around $p$. 
The set $T$ is closed and analytic in $\mc{V}_W$, see proof of \cite[Lemma 8.2]{PS08}.
Let $T_0$ be the irreducible component containing $(p,[V])$ for some and hence any $p\in U$.

The action of $\tbP(\mb{Z})^+$ on $X\times D$, defined by $\gamma\cdot (x,h)=(x,\gamma\cdot h)$, lifts to $\mc{V}_W$. There is also an action of $\tbP(\mb{Z})^+$ on $T$. Let $Y:=\tbP(\mb{Z})^+\bs T_0$ be the image of $T_0$ in $\tbP(\mb{Z})^+\bs T$.

\begin{lemma}\label{definability varphi}
The period mapping $\varphi:X\ra \tbP(\mb{Z})^+\setminus D$ is definable.
\end{lemma}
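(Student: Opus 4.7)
The plan is to obtain the definability of $\varphi$ by combining the BBKT definability theorem with the fundamental set $\Phi'$ constructed in Lemma \ref{product of fundamental sets is a fundamental set}. By that lemma, $\Phi'$ is a definable fundamental set for the action of $\tbP(\mb{Z})^+$ on $D$, so it equips the quotient $\tbP(\mb{Z})^+\backslash D$ with a canonical o-minimal structure (a subset is declared definable iff its preimage in $\Phi'$ is definable), for which the quotient map $\pi\colon \Phi'\to \tbP(\mb{Z})^+\backslash D$ is definable.

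By \cite{BBKT}, the mixed period map $\psi'\colon X\to \Gamma\backslash \mc{M}$ is definable; in particular, there are a definable fundamental set $\Sigma\subset \mc{M}$ for the $\Gamma$-action and a definable lift $\wt\sigma\colon X\to \Sigma$ of $\psi'$. By Theorem \ref{weak period map}, the image of $\wt\sigma$ is contained in $\Sigma\cap D$, so one obtains a definable map $\wt\sigma\colon X\to \Sigma\cap D$. Since $\varphi=\pi\circ\wt\sigma$ as set-maps, proving definability of $\varphi$ reduces to checking that for every definable open $U\subset \Phi'$, the set $\wt\sigma^{-1}\bigl((\tbP(\mb{Z})^+\cdot U)\cap \Sigma\bigr)$ is definable in $X$, which in turn reduces to the finiteness of $\{g\in \tbP(\mb{Z})^+: gU\cap \Sigma\neq \varnothing\}$.

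For this finiteness I exploit that both $\Sigma$ (by the BBKT construction) and $\Phi'$ (by Theorem \ref{splitting of retraction of weak mixed domain} combined with Lemma \ref{product of fundamental sets is a fundamental set}) admit the same kind of product decomposition into a Siegel-type fundamental set in the reductive factor $D_{\Gr}$ and a bounded fundamental set in the unipotent factor $D_{\tbU,\mb{R}}$. The Siegel property for the arithmetic group $\Lambda\subset \tbG(\mb{Q})$ acting on $D_{\Gr}$ yields finite overlap in the reductive direction, while boundedness of $\Phi_{\tbU}$ gives finite overlap in the unipotent direction; splicing these by the definable identification $j$ of Theorem \ref{splitting of retraction of weak mixed domain} gives the desired finiteness of the overlap set.

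The main obstacle is precisely this Siegel-type comparison between $\Sigma$ and $\tbP(\mb{Z})^+$-translates of $\Phi'$, since $\Gamma$ need not have finite index in $\tbP(\mb{Z})^+$. An equivalent, and perhaps more transparent, alternative is to rerun BBKT's construction directly with $\tbP(\mb{Z})^+$ in place of $\Gamma$ and with $\Phi'$ as the ambient fundamental set; their nilpotent-orbit-based local lift is insensitive to the choice of arithmetic group in which the monodromy sits, and the Siegel-type product structure of $\Phi'$ is exactly the shape of fundamental set to which their argument applies, so definability of $\varphi$ follows verbatim.
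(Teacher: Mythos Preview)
Your overall architecture matches the paper's: one reruns the BBKT proof, working locally on punctured polydisks and checking that the definable local lift on a vertical strip lands in finitely many $\tbP(\mb{Z})^+$-translates of $\Phi'$. However, two of the steps you label as routine are precisely where the work lies, and your justification for them does not go through as written.

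First, your claim that BBKT's fundamental set $\Sigma$ ``admits the same kind of product decomposition into a Siegel-type fundamental set in the reductive factor $D_{\Gr}$ and a bounded fundamental set in $D_{\tbU,\mb{R}}$'' is not correct. BBKT build $\Sigma$ inside $\mc{M}$, and its reductive piece is a Siegel set in the \emph{ambient} pure domain $D^+_{\mcMT,\Gr}$ for the full Mumford--Tate group, not in $D_{\Gr}$. So the finiteness you need is not ``the Siegel property for $\Lambda$ acting on $D_{\Gr}$'' but rather: a Siegel set in $D^+_{\mcMT,\Gr}$ meets only finitely many $\Lambda$-translates of $\Phi_{\tbG}\subset D_{\Gr}$. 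This is a comparison of Siegel sets for two different groups in two different domains, and it is not automatic. The paper supplies the missing ingredient: by Andr\'e \cite[Cor.~2]{A} the group $\tbG$ is semisimple, and then the Siegel-set comparison results of Borel--Harish-Chandra \cite[7.5, 7.7]{BHC} reduce the question to Siegel sets in $D^+_{\mcMT,\Gr}$, which is handled by \cite[Theorem~1.5]{BKT}.

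Second, the boundedness of the unipotent coordinate of the local lift is not part of the BBKT black box you are invoking; it is the content of \cite[Cor.~2.34]{BP}, applied after projecting via $D\to D_{\tbU,\mb{R}}$. Your ``rerun BBKT verbatim'' alternative thus hides exactly the two inputs (Andr\'e semisimplicity plus Borel--Harish-Chandra for the reductive direction; Brosnan--Pearlstein for the unipotent direction) that the paper isolates explicitly. Once you insert those, your sketch becomes the paper's proof.
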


\begin{proof}
We follow Bakker-Brunebarbe-Klingler-Tsimerman \cite[\textsection 5]{BBKT} and make suitable modifications (recall that $\tbP$ is the connected algebraic monodromy group). By \cite[Lemma 4.1]{BBKT}, by passing $X$ to a finite \'{e}tale covering if necessary,  $X$ is the union of finitely many punctured polydisks such that the GPVMHS has unipotent monodromy over each such polydisk. It suffices to prove that the period map $\varphi|_{(\Delta^*)^n}$ restricted to each such polydisk, say $(\Delta^*)^n$, is definable. By \cite[Prop. 5.2]{BBKT}, the restriction  to any vertical strip $E$ of the lifting $\wt{\varphi}$ of $\varphi|_{(\Delta^*)^n}$  is definable, so it suffices to prove that the image $\wt{\varphi}(E)$  lies in a finite union of definable fundamental sets of $\tbP(\mb{Z})^+\setminus D$. By \cite[Cor. 2.34]{BP}, the composition of $\wt{\varphi}$ with $D\ra D_{\tbU,\mb{R}}$ is bounded on any vertical strip. 
It suffices to prove that the image of $\wt{\varphi}(E)$ in $D_{\Gr}$ lies in a finite union of Siegel sets.
By \cite[Cor. 2]{A}, $\tbG$ is semisimple. Then by \cite[Theorem 1.2]{Orr} and \cite[Prop. 3.4]{BGST}, it suffices to prove that $\wt{\varphi}(E)$ lies in a finite union of  Siegel sets in $D^+_{\mcMT,\Gr}$.
This holds by \cite[Theorem 1.5]{BKT}.
\end{proof}

Hence, the set 
$$W\cap (X\times \Phi')=\{(x,F^\bullet)\in X\times \Phi': \varphi(x)=\pi|_{\Phi'} (F^\bullet)\}$$
is definable.

Since the Hilbert scheme $M$ is proper, the composition
$T\hookrightarrow W\times M\ra W$
 is proper, so $\tbP(\mb{Z})^+\bs T\ra \tbP(\ZZ)^+\bs W=X$ is proper, and thus the induced map $q: Y\ra X$ is proper. The intersection $\mc{V}\cap((W\cap(X\times \Phi^\prime))\times M)$  is a definable fundamental set for the action of $\tbP(\mb{Z})^+$ on $\mc{V}_{W}$. By  \cite[Proposition 2.3]{BBKT}, $\tbP(\mb{Z})^+\bs \mc{V}_W$ and similarly $Y$ have definable structures, so the projection $q$ is definable. Then $q(Y)$ is closed, complex analytic and definable in $X$, and therefore algebraic by definable Chow \cite{PS}. 
 
Since $q(Y)\supset p_X(U)$, by Lemma \ref{Induction on dimension of X}, we can assume $q(Y)=X$. Let $\mc{F}$ be the family of algebraic varieties parametrized by the projection of $T_0$ in $M$. The family $\mc{F}$ is stable under the image $\Gamma_Y$ of $\pi_1(Y)\ra \pi_1(X)\ra \Gamma$.  Let $\Gamma_{\mc{F}}\subset \Gamma_Y$ be the subgroup of elements $\gamma$ such that every fiber in $\mc{F}$ is invariant under $\gamma$. For any $\mu\in \Gamma_Y-\Gamma_{\mc{F}}$,  define $E_{\mu}$ to be the image in $M$ of the union of all fibers which are invariant under $\mu$.  The algebraic subvariety $E_\mu$ is properly contained in $M$. Hence, the $\Gamma_Y$-stabilizer of a very general fiber in $\mc{F}$, i.e. a fiber outside a countable union $\mc{C}_1$ of proper subvarieties of $\mc{F}$, is $\Gamma_{\mc{F}}$.

We first make some observations:

\begin{itemize}
\item There are at most countably many Mumford-Tate domains: we know from \cite[Prop. 6.8]{BBKT} that any Mumford-Tate domain is a component of the Noether-Lefschetz locus 
$\{h'\in \mc{M}: \mcMT_{h'}\subset \mcMT_h\}$
of a mixed Hodge structure $h$. By \cite[Cor. 6.9]{BBKT}, Noether-Lefschetz loci are definable, so they have only finitely many components.
By \cite[Lemma 2.(a)]{A}, any Mumford-Tate group of a mixed Hodge structure $H$ is the largest subgroup of $\tbGL(H_{\QQ})$ that fixes (and thus fixes finitely many) Hodge tensors of $H$.
Recall that Hodge tensors of a mixed Hodge structure $H$ are defined to be the type $(0,0)$ elements of the Hodge structure $T^{m,n}(H_{\QQ}):= H^{\otimes m}\otimes \Hom(H,\ZZ)^{\otimes n}\otimes_{\ZZ}\QQ$, and they are exactly the elements of $F^0T^{m,n}(H_{\CC})\cap T^{m,n}(H_{\QQ})$ of weight $0$, so there are at most countably many Hodge tensors.

\item We have countably many families of weak Mumford-Tate domains described as follows:
the monodromy group $\Gamma$ is countable, so it has at most countably many finitely generated subgroups, and thus it has at most countably many subgroups, denoted by $\Gamma_k$, which are monodromy groups (which are finitely generated because fundamental groups are finitely generated) of smooth locus of closed subvarieties of $X$.
Denote by $\tbP_j$ ($j\in \mathbb{N}$) all the pairwise distinct $\QQ$-groups which arises as the identity component of the $\QQ$-Zariski closures of some $\Gamma_k$.
We index in a way such that $\tbP_0:=\tbP$.
Let $E_j$ be the intersection of $D$ with the union of all Mumford-Tate group orbits of mixed $\mb{Z}$-Hodge structures whose Mumford-Tate group contains $\tbP_j$ as a normal subgroup.
Since there are at most countably many Mumford-Tate domains, we can write $E_j$ as a countable union of intersections of $D$ and some Mumford-Tate domain. 
Since $D$ and Mumford-Tate domains are definable, each such intersection has at most finitely many irreducible components. 
Denote the countably many irreducible components arising in this way by $C_i$.
Let $\mathscr{D}_i$ be the family of $\tbP_j(\RR)^+\tbU_j(\CC)$-orbits of elements in $C_i$, where $\tbU_j$ is the unipotent radical of $\tbP_j$.
Let $\mathscr{D}:=\bigcup_{i=1}^\infty\mathscr{D}_i$.

\item Suppose $X$ is the smallest weakly special subvariety containing $p_X(U)$. 
Then $X$ is the smallest weakly special subvariety containing $p_X(U)^{\Zar}$.
The image of the lifting of $p_X(U)^{\Zar}$ is thus not contained in any weak Mumford-Tate domain in $\mathscr{D}$,
otherwise $\tbP_j=\tbP_0$ for some $j>0$ by Lemma \ref{factor through domain}, which is a contradiction.
Since $U\subset W$, it follows that the image $p_D(U)$ is not contained in any weak Mumford-Tate domain in $\mathscr{D}$.

\item By Theorem \ref{Andre-Deligne: monodromy and proper weakly special}, if the Zariski closure of the projection to $X$ of a component of $V_m\cap W$ has algebraic monodromy group $\tbP'$ strictly smaller than $\tbP$, then it is contained in a proper weakly special subvariety. By Lemma \ref{weak period map}, the image of the lifting of it is contained in the weak Mumford-Tate domain  $D(\tbP')$, 
which is strictly contained in $D$ by Lemma \ref{monodromy and smallest weakly special}. 
 All weak Mumford-Tate domains in $\mathscr{D}$ are strictly contained in $D$.

\item 
We will make use of the fact that $p_D(U)$ is not contained in any weak Mumford-Tate domain in $\mathscr{D}$, in particular $\mathscr{D}_i$.
Firstly, for very general $(V_m,D_{\alpha})\in \mc{F}\times \mathscr{D}_i$,
there is a component of $U_m$ of $V_m\cap W$ with dimension $\geq \dim U$
such that image $p_D(U_m)$ is not contained in $D_{\alpha}$.
Then for very general $V_m$ in $\mc{F}$, 
there is a component $U_m$ of $V_m\cap W$ with dimension $\geq \dim U$
such that image $p_D(U_m)$ is not contained in any domain belonging to $\mathscr{D}_i$.
It follows that for very general $V_m$ in $\mc{F}$, 
there is a component $U_m$ of $V_m\cap W$ with dimension $\geq \dim U$
such that image $p_D(U_m)$ is not contained in any domain belonging to $\mathscr{D}:=\bigcup_{i=1}^\infty\mathscr{D}_i$,
which implies that the image of the lifting of $p_X(U_m)^{\Zar}$ is not contained in any domain in $\mathscr{D}$ (using the fact that $U_m\subset W$).
\end{itemize}

Suppose $X$ is the smallest weakly special subvariety containing $p_X(U)$.  From above, we know that for any $V_m$ outside a countable union $\mc{C}_2$  of proper closed subvarieties of $\mc{F}$, 
there is a component $U_m$ of $V_m\cap W$ with dimension $\geq \dim U$ such that $p_X(U_m)^{\Zar}$ has algebraic monodromy group $\tbP$.
Combining with earlier discussion,
for any  $V_m$ outside $\mc{C}_1\cup \mc{C}_2$, the $\Gamma_Y$-stabilizer of its fiber is $\Gamma_{\mc{F}}$, and there is a component $U_m$ of $V_m\cap W$ with dimension $\geq \dim U$
such that $p_X(U_m)^{\Zar}$ has algebraic monodromy group $\tbP$.
By Lemma \ref{monodromy and smallest weakly special}, $X$ is the smallest weakly special subvariety containing the projection of this component.
By definition of the Hilbert scheme, $\dim V=\dim V_m$.
We have 
$$\dim U_m^{\Zar}-\dim U_m\leq \dim V_m-\dim U_m\leq \dim V-\dim U<\dim \wc{D}.$$
It suffices to prove Ax-Schanuel for fibers outside $\mc{C}_1$; because once this is proved, then the projection of the aforementioned component is contained in a proper weakly special subvariety of $X$, which is a contradiction, and thus $p_X(U)$ is contained in a proper weakly special subvariety of $X$.
Therefore, we can assume $V$ is very general outside $\mc{C}_1$.

\begin{thm}\label{normality of stabilizer}
The subgroup $\tbN$ is normal in $\tbP$.
\end{thm}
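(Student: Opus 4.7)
The plan is to show that $\tbN$ is normalized by $\tbP$ by using the Hilbert scheme setup already in place: namely, we will identify $\Gamma_{\mc{F}}$ with the $\Gamma_Y$-stabilizer of $V$, prove that $\Gamma_{\mc{F}}$ is normal in $\Gamma_Y$, and then pass to $\mb{Q}$-Zariski closures and identity components to obtain $\tbN \triangleleft \tbP$.

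Since $V$ has been reduced to a very general fiber of $\mc{F}$, we have $Stab_{\Gamma_Y}(V) = \Gamma_{\mc{F}}$; in particular $\Gamma_{\mc{F}} = Stab(V) \cap \Gamma_Y$. To prove normality of $\Gamma_{\mc{F}}$ in $\Gamma_Y$, fix $\gamma\in \Gamma_Y$ and $\mu\in \Gamma_{\mc{F}}$. Because $\mc{F}$ is $\Gamma_Y$-stable, for any $V_m\in \mc{F}$ the translate $\gamma^{-1}V_m$ is again a member of $\mc{F}$, so $\mu(\gamma^{-1}V_m)=\gamma^{-1}V_m$, i.e., $(\gamma\mu\gamma^{-1})V_m=V_m$ for every $m$. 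This exhibits $\gamma\mu\gamma^{-1}\in \Gamma_{\mc{F}}$ and gives $\Gamma_{\mc{F}}\triangleleft \Gamma_Y$.

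After replacing $Y$ by an irreducible component mapping onto $X$, the proper surjective morphism $q\colon Y\ra X$ induces a map of finite-index image on fundamental groups, so $\Gamma_Y$ has finite index in $\Gamma$. Hence the identity component of the $\mb{Q}$-Zariski closure of $\Gamma_Y$ equals $\tbP$. A parallel finite-index argument, comparing $\Gamma_{\mc{F}}=Stab(V)\cap \Gamma_Y$ with $Stab(V)$, shows that the identity component of the $\mb{Q}$-Zariski closure of $\Gamma_{\mc{F}}$ equals $\tbN$. Since taking $\mb{Q}$-Zariski closure preserves normality and the identity component of a normal subgroup of a group is normal in the identity component, applying these operations to $\Gamma_{\mc{F}}\triangleleft \Gamma_Y$ yields $\tbN \triangleleft \tbP$.

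The main obstacle will be verifying the two identifications of identity components of Zariski closures on the preceding paragraph: that $\Gamma_Y$ is Zariski dense in $\tbP$ (equivalently, of finite index in $\Gamma$) requires properness of $q$ together with the standard topological fact that proper dominant morphisms of irreducible complex algebraic varieties induce finite-index maps on fundamental groups, while checking that $\Gamma_{\mc{F}}$ captures all of $\tbN$ requires that the very general $V$ be close enough to the universal member of $\mc{F}$. Both steps are standard ingredients of the Hilbert scheme method of \cite{MPT}, \cite{BT}, and \cite{G}, and can be accomplished after passing to a suitable finite cover.
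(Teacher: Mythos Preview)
Your approach is essentially the paper's Hilbert scheme argument, and your direct verification that $\Gamma_{\mc{F}}\triangleleft\Gamma_Y$ from the definition (using only that $\mc{F}$ is $\Gamma_Y$-stable) is in fact cleaner than the paper's route via ``$\gamma$ sends very general fibers to very general fibers''. The finite-index step $[\Gamma:\Gamma_Y]<\infty$ is also fine; the paper deduces it from definability of $q$ (fibers have finitely many components), which is the same mechanism as your properness argument.

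There is, however, a genuine gap in the line ``a parallel finite-index argument, comparing $\Gamma_{\mc{F}}=Stab(V)\cap\Gamma_Y$ with $Stab(V)$, shows that the identity component of the $\mb{Q}$-Zariski closure of $\Gamma_{\mc{F}}$ equals $\tbN$.'' Recall that $Stab(V)$ is taken inside $\tbP(\mb{Z})^+$, while $\Gamma_{\mc{F}}\subset\Gamma_Y\subset\Gamma$. Nothing in the setup forces $\Gamma$ to be of finite index in $\tbP(\mb{Z})^+$ (monodromy groups can be thin), so $Stab(V)\cap\Gamma_Y$ need not be of finite index in $Stab(V)$, and you cannot conclude $\overline{\Gamma_{\mc{F}}}^{\,\Zar,0}=\tbN$. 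No ``passing to a finite cover'' repairs this, since the issue is the possible gap between $\Gamma$ and $\tbP(\mb{Z})^+$, not between $\Gamma_Y$ and $\Gamma$.

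The paper sidesteps this by showing that $\Gamma_Y$ normalizes $Stab(V)$ itself: for $\gamma\in\Gamma_Y$, both $V$ and $\gamma V$ are very general fibers, hence have the \emph{same} $\tbP(\mb{Z})^+$-stabilizer, so $Stab(V)=Stab(\gamma V)=\gamma\,Stab(V)\,\gamma^{-1}$. Taking Zariski closures then gives $\tbN\triangleleft\tbP$ directly. If you prefer to keep your conjugation argument, the fix is minimal: run the ``very general'' construction with $\tbP(\mb{Z})^+$ in place of $\Gamma_Y$, i.e.\ set $\Gamma'_{\mc{F}}:=\{\sigma\in\tbP(\mb{Z})^+:\sigma V_m=V_m\text{ for all }m\}$. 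Then $Stab(V)=\Gamma'_{\mc{F}}$ for very general $V$, and your same computation (using only that $\mc{F}$ is $\Gamma_Y$-stable) shows $\gamma\,\Gamma'_{\mc{F}}\,\gamma^{-1}=\Gamma'_{\mc{F}}$ for all $\gamma\in\Gamma_Y$. Now $\overline{\Gamma'_{\mc{F}}}^{\,\Zar,0}=\tbN$ by definition, and Zariski density of $\Gamma_Y$ in $\tbP$ finishes the proof.
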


\begin{proof}
Since $q$ is definable, each fiber of $q$  has only finitely many components. Then $\Gamma_Y$ is of finite index in $\Gamma$. Since $\Gamma$ is Zariski-dense in the connected group $\tb{P}$, it follows that  $\Gamma_Y$ is Zariski-dense in $\tbP$. Every element $\gamma\in \Gamma_Y$ sends a very general fiber of $\mc{F}$  to a very general fiber, so $Stab(V)=\Gamma_{\mc{F}}=Stab(\gamma V)=\gamma Stab(V)\gamma^{-1}$.  Since $\Gamma_Y$ is Zariski-dense in $\tbP$, $\tbN$ is then normal in $\tbP$.
\end{proof}

\section{Definable quotient $\ol{I}$}\label{definable subset I}

As explained in the idea of proof (Section \ref{Idea of proof}), we have to look at the projection of $r(D)$ onto the reductive part modulo the stabilizer part, which will be defined and denoted by $D_{\tbL}$.

\subsection{Definable fundamental set $\Phi$ for $\tb{P}(\mb{Z})^+\bs D$}\label{constructing definable fundamental set}

Let  $\tbN_u$ be the unipotent radical of $\tbN$.  Recall in Section \ref{Real split locus of domains} we fixed Levi subgroup $\tbN_r$ in $\tbN$  and  Levi subgroup $\tbG$ in $\tbP$  such that $\tbN_r\subset \tbG$. 
By Theorem \ref{normality of stabilizer}, $\tbN$ is normal in $\tbP$,  so $\tbN_u=\tbN\cap \tbU$ and $\tbN_r=\tbN\cap \tbG$  by \cite[Prop. 2.13]{G2}.  Thus the $\mb{Q}$-group $\tbN_r$ is normal in $\tbG$. Similarly, $\tbG$ is normal in $\mcMT_r$. By \cite[Cor. 2]{A}, $\tbG$ is semisimple, so there exists a connected normal subgroup $\tbL$ of $\tbG$ such that the map $\tbN_r\times\tbL\ra \tbG$ defined by $(g_1, g_2)\mapsto g_1g_2$ is an isogeny \cite[Theorem 21.51]{Mil2}. This induces an isogeny $\beta:\tbL \ra \tbG/\tbN_r$.  
Let $D_{\tbN_r}$ and $D_{\tbL}$ be respectively the $\tbN_r(\mb{R})^+$-orbit  and the $\tbL(\mb{R})^+$-orbit of the pure Hodge structure ${h_1}_{\text{Gr}}$ in $D_{\Gr}$, where ${h_1}_{\text{Gr}}$  was defined in Section \ref{Real split locus of domains}.
By \cite[II. B]{GGK},  we have an isomorphism $D_{\tb{N}_r}\times D_{\tbL}\simeq D_{\text{Gr}}$.
Combining this with the isomorphism in  Theorem \ref{splitting of retraction of weak mixed domain}, we have an isomorphism 
$$j: r(D)\simeq D_{\tb{N}_r}\times D_{\tbL}\times D_{\tb{U},\mb{R}}.$$
The unipotent radical acts trivially on the associated graded, so for any $u\in \tbU(\mb{R})$, $g_1\in\tbN_r(\mb{R})^+$, and $g_2\in \tbL(\mb{R})^+$, we have 
$$j(ug_1g_2h_1)=(g_1{h_1}_{\text{Gr}}, g_2{h_1}_{\text{Gr}}, ug_1g_2{h_1}_s).$$

Let $\tbN_r(\ZZ)^+:=\tbN_r(\ZZ)\cap \tbN_r(\RR)^+$ and $\tbL(\ZZ)^+:=\tbL(\ZZ)\cap \tbL(\RR)^+$.
\begin{lemma}\label{image of arithmetic subgroup}
We have 
$\tbP(\mb{Z})^+=\bigcup_{i=1}^k  \tbU(\mb{Z})\tbN_r(\mb{Z})^+\tbL(\mb{Z})^+\rho_i$
for some $\rho_1,\dots, \rho_k\in \tbP(\mb{Z})^+$, where one of the $\rho_i$ is the identity.
\end{lemma}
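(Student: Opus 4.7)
The plan is to decompose $\tbP(\mb{Z})^+$ in two stages corresponding to the Levi decomposition $\tbP = \tbU \rtimes \tbG$ and the isogeny $\tbN_r \times \tbL \to \tbG$.

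First, I would show that $\tbU(\mb{Z})\tbG(\mb{Z})^+$ is a subset of $\tbP(\mb{Z})^+$ whose finitely many right translates exhaust $\tbP(\mb{Z})^+$: there exist $\sigma_1, \dots, \sigma_m \in \tbP(\mb{Z})^+$ with $\sigma_1 = e$ such that $\tbP(\mb{Z})^+ = \bigcup_{j=1}^m \tbU(\mb{Z})\tbG(\mb{Z})^+ \sigma_j$.  This relies on reduction theory for arithmetic subgroups of $\mb{Q}$-groups with unipotent radical: the multiplication $\tbU \times \tbG \to \tbP$ is an isomorphism of $\mb{Q}$-varieties, so comparing chosen integral structures on $\tbU$, $\tbG$, and $\tbP$ (Borel) gives that $\tbU(\mb{Z})\tbG(\mb{Z})^+$ sits inside $\tbP(\mb{Z})^+$ with only finitely many right translates needed to cover.

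Second, since $\tbN_r \times \tbL \to \tbG$ is a $\mb{Q}$-isogeny and $\tbN_r$, $\tbL$ are commuting normal subgroups of $\tbG$, the product $\tbN_r(\mb{Z})^+ \tbL(\mb{Z})^+$ is a subgroup of $\tbG(\mb{Z})^+$.  It is the image of the arithmetic subgroup $\tbN_r(\mb{Z})^+ \times \tbL(\mb{Z})^+$ of $\tbN_r \times \tbL$ under the isogeny, so by Borel--Harish-Chandra it is commensurable with $\tbG(\mb{Z})^+$; being contained in $\tbG(\mb{Z})^+$, it has finite index there.  This yields $\tau_1, \dots, \tau_n \in \tbG(\mb{Z})^+$ with $\tau_1 = e$ such that $\tbG(\mb{Z})^+ = \bigcup_{k=1}^n \tbN_r(\mb{Z})^+ \tbL(\mb{Z})^+ \tau_k$.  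Combining both stages,
$\tbP(\mb{Z})^+ = \bigcup_{j,k} \tbU(\mb{Z}) \tbN_r(\mb{Z})^+ \tbL(\mb{Z})^+ \tau_k \sigma_j$, so enumerating $\{\tau_k \sigma_j\}$ as $\{\rho_i\}$ proves the lemma with $\rho_1 = \tau_1 \sigma_1 = e$ among the representatives.

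The main obstacle will be the first step.  Establishing that $\tbU(\mb{Z})\tbG(\mb{Z})^+$ has finite covering index in $\tbP(\mb{Z})^+$ is subtle because the two natural integral structures on $\tbP$ (one coming from $\tbP$ directly, the other from the product $\tbU \times \tbG$ under the Levi variety isomorphism) need not coincide, and one has to invoke Borel's reduction theory to get that they are commensurable.  The second step, by contrast, is essentially formal once one knows that isogenies of $\mb{Q}$-groups send arithmetic subgroups to arithmetic subgroups and that $\tbN_r$ and $\tbL$ commute in $\tbG$, the latter being an easy consequence of both being normal and of the isogeny $\tbN_r \times \tbL \to \tbG$.
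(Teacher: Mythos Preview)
Your proposal is correct and follows essentially the same two-stage argument as the paper: first use the Levi decomposition to get that $\tbU(\mb{Z})\tbG(\mb{Z})^+$ has finite index in $\tbP(\mb{Z})^+$, then use the isogeny $\tbN_r\times\tbL\to\tbG$ to get that $\tbN_r(\mb{Z})^+\tbL(\mb{Z})^+$ has finite index in $\tbG(\mb{Z})^+$. The only difference is bibliographic: where you invoke Borel's reduction theory and Borel--Harish-Chandra, the paper cites the corresponding statements in Platonov--Rapinchuk (\cite[p.~173, Cor.~2]{PR} and \cite[Theorem~4.1]{PR}), which dispatches your ``main obstacle'' in one line.
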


\begin{proof}
By \cite[p. 173, Cor. 2]{PR}, $\tbU(\mb{Z})\tbG(\mb{Z})$ is of finite index in $\tbP(\mb{Z})$, so $\tbU(\ZZ)\tbG(\ZZ)^+$ is of finite index in $\tbP(\ZZ)^+$. Consider the isogeny $\tbN_r\times \tbL\ra \tbG$. By \cite[Theorem 4.1]{PR}, $\tbN_r(\mb{Z})\tbL(\mb{Z})$ is of finite index in $\tbG(\mb{Z})$, so $\tbN_r(\ZZ)^+\tbL(\ZZ)^+$ is of finite index in $\tbG(\ZZ)^+$.
\end{proof}

Recall the fundamental set  $\Phi_{\tbU}$ for the $\tbU(\mb{Z})$-action on $D_{\tbU,\mb{R}}$ and recall the arithmetic subgroup $\Lambda$ of $\tbG(\mb{Q})$ in Section \ref{temporary definable fundamental set}. 
Let $\Phi_{\tbL}$ and $\Phi_{\tb{N}_r}$  be definable open fundamental sets for the actions of $\tbL(\mb{Z})^+$ and $\tb{N}_{r}(\mb{Z})^+$ on $D_{\tbL}$ and $D_{\tb{N}_r}$  respectively  \cite[Theorem 1.1]{BKT}. We can assume that $\Phi_{\tbL}$ and $\Phi_{\tbN_r}$ contain ${h_1}_{\text{Gr}}$. Since $\tbN_r(\mb{Z})^+\tbL(\mb{Z})^+$ is of finite index in $\Lambda$ \cite[Theorem 4.1]{PR}, the image of $\Phi_{\tbL}\times \Phi_{\tbN_r}$ in $D_{\Gr}$ is a definable open fundamental set for the action of $\Lambda$ on $D_{\Gr}$. By Lemma \ref{product of fundamental sets is a fundamental set} with $\Phi_{\tbG}$ replaced by $\Phi_{\tb{N}_r} \times\Phi_{\tbL}$, the set $j^{-1} (\Phi_{\tb{N}_r} \times\Phi_{\tbL}\times  \Phi_{\tb{U}})$  is a definable open fundamental set for the action of $\tb{P}(\mb{Z})^+$ on $r(D)$, so 
$$\Phi_{\mb{R}}:=  \bigcup_{i=1}^k \rho_i\cdot   j^{-1} (\Phi_{\tb{N}_r} \times\Phi_{\tbL}\times  \Phi_{\tb{U}})$$
is also a definable open fundamental set for the action of $\tb{P}(\mb{Z})^+$ on $r(D)$.
Then $\Phi:= r^{-1}(\Phi_{\mb{R}})$ is a definable open fundamental set for the action of $\tbP(\mb{Z})^+$ on $D$.

\subsection{Definable subset $I$ of $\textbf{P}(\CC)$}\label{definable subset I of P(R)}

Let
$$I:=\{\gamma\in \tbP(\RR): \dim \gamma^{-1}V \cap W\cap (X\times \Phi)\geq  \dim U\},$$
where $\gamma^{-1}$ acts on $V$ by acting on the $\widecheck{D}$-coordinates. By \cite[Proposition 2.3]{BBKT}, $\tbP(\mb{Z})^+\bs D$ has a definable structure such that the canonical map $\Phi\ra \tbP(\mb{Z})^+\bs D$ is definable. 
By Lemma \ref{definability varphi}, the set 
$$W\cap (X\times \Phi)=\{(x,F^\bullet)\in X\times \Phi: \varphi(x)=\pi|_{\Phi} (F^\bullet)\}$$
is then definable. Then since $V$ is algebraic and since $\dim U$ is a fixed number, $I$ is definable.

\subsection{Definable quotient $\ol{I}$}
Recall that $\tbN$ is the identity component of the $\mb{Q}$-Zariski closure of $Stab(V):=\{\sigma\in \tb{P}(\mb{Z})^+: \sigma V=V\}$ in $\tb{P}$, so $\tbN_{\mb{C}}$ is the identity component of the $\mb{C}$-Zariski closure of $Stab(V)$ by Lemma \ref{Galois descent, Zariski closure}. Moreover, $V$ is algebraic and invariant under $Stab(V)$, so $V$ is invariant under  $\tbN_{\mb{C}}$. Let $\ol{I}$ be the definable image of $I$ under the map $\tbP(\RR)\ra (\tbP/\tbN)(\RR)$, i.e. 
$$\ol{I}:=\{[\gamma]: \gamma\in \tbP(\RR)\text{ and } \dim \gamma^{-1}V \cap W\cap (X\times \Phi)\geq  \dim U\}.$$
Let  $\tbW$  be  the unipotent radical of $\tbP/\tbN$.  By \cite[Corollary 14.11]{Bor}, $\tb{W}=\tb{U}\tb{N}/\tb{N}$. The group $\tbH:=\tb{GN}/\tbN$ is a Levi subgroup of $\tbP/\tbN$. We have $\tbH\cong \tbG/(\tbG\cap \tbN)$.
The image  $I_{\tbW}$ of $\ol{I}$ under the definable projection $\pi: (\tbP/\tbN)(\mb{R})\simeq \tbW(\mb{R})\rtimes \tbH(\mb{R})\ra \tbW(\mb{R})$ is definable. We have
\begin{align*}
I_\tb{W}=\{[\gamma]: &\gamma\in \tb{U}(\mb{R})\text{ and } \dim (\gamma\eta)^{-1}V \cap W\cap (X\times \Phi)\geq  \dim U
\\& \text{ for some } \eta \in \tb{L}(\RR) \}.
\end{align*}

Let $\tbN(\ZZ)^+:=\tbN(\ZZ)\cap \tbP(\RR)^+$.

\begin{lemma}\label{intersecting fundamental domain, definable quotient}
If  $\gamma$  is in $\tbP(\mb{Z})^+$ such that $U\cap (X\times \gamma\bigcup_{\sigma\in \tbN(\mb{Z})^+} \sigma \Phi)\neq \varnothing$, then $\dim \gamma^{-1}V \cap W\cap (X\times \Phi)\geq  \dim U$.
\end{lemma}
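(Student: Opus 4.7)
The plan is to use an element of $\tbP(\ZZ)^+$ that both translates the given point into $X\times\Phi$ and preserves $V$. By hypothesis, the intersection $U \cap (X \times \gamma\sigma\Phi)$ is non-empty for some $\sigma \in \tbN(\mb{Z})^+$. Set $\tau := \sigma^{-1}\gamma^{-1} \in \tbP(\ZZ)^+$, viewed as acting on $X\times\wc{D}$ through the second factor. By Theorem \ref{normality of stabilizer} the subgroup $\tbN$ is normal in $\tbP$, so $\gamma\sigma^{-1}\gamma^{-1} \in \tbN(\mb{C})$; since $\tbN_{\mb{C}}$ stabilizes $V$ (as noted in the paragraph preceding the lemma), this conjugate fixes $V$, giving $\tau V = \sigma^{-1}\gamma^{-1}V = \gamma^{-1}V$. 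Because $\tau \in \tbP(\ZZ)^+$ also preserves $W$, the biholomorphism $\tau$ of $X\times\wc{D}$ restricts to a biholomorphism $V\cap W \to \gamma^{-1}V \cap W$.

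Next I would pick a point $(x_0, h_0) \in U \cap (X \times \gamma\sigma\Phi)$ inside a dense Zariski-open subset of $U$ on which $(x_0, h_0)$ is a regular point of $U$, the local dimension of $V\cap W$ at $(x_0,h_0)$ equals $\dim U$, and $p_X|_U$ attains its generic rank. Setting $(x_0, h_0') := \tau(x_0, h_0) = (x_0, \sigma^{-1}\gamma^{-1}h_0) \in X\times\Phi$, this point lies in $\tau(U)\cap (X\times\Phi) \subset \gamma^{-1}V \cap W \cap (X\times\Phi)$. Transport of local structure by the biholomorphism $\tau$ together with the openness of $X\times\Phi$ then shows that $(x_0, h_0')$ is a regular point of dimension $\dim U$ in $\gamma^{-1}V \cap W \cap (X\times\Phi)$.

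Let $E_c$ denote the irreducible analytic component of $\gamma^{-1}V \cap W \cap (X\times\Phi)$ through $(x_0, h_0')$; locally $E_c$ is the connected component of $\tau(U)\cap(X\times\Phi)$ containing $(x_0, h_0')$, and in particular an Euclidean open subset of the irreducible analytic set $\tau(U)$. Taking Zariski closures in $X\times\wc{D}$ gives $E_c^{\Zar} = \tau(U)^{\Zar}$. Because $\tau$ acts trivially on the $X$-factor and $(x_0, h_0)$ lies at a maximal-rank point of $p_X|_U$, one concludes $p_X(E_c)^{\Zar} = p_X(\tau(U))^{\Zar} = p_X(U)^{\Zar}$. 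Hence the algebraic monodromy groups agree, $\tbP_{E_c} = \tbP_U$, so $E_c^{ws} = U^{ws}$. This places $(x_0, h_0')$ in $S_{\dim U}(\gamma^{-1}V \cap W \cap (X\times\Phi), U^{ws})$.

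The main subtlety is the existence of a dense Zariski-open subset of $U$ on which $V\cap W$ has local dimension exactly $\dim U$, i.e.\ on which $U$ is locally maximal inside $V\cap W$. This is automatic once one has reduced to the case where $U$ is a maximal-dimensional irreducible analytic subset of $V\cap W$, a reduction that is harmless in the induction for the main Ax-Schanuel statement since replacing $U$ by such a maximal subset can only weaken $\dim V - \dim U$ and can only shrink $U^{ws}$.
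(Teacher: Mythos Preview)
Your proof is correct and follows essentially the same line as the paper's: translate by $\sigma^{-1}\gamma^{-1}$, use normality of $\tbN$ to get $\sigma^{-1}\gamma^{-1}V=\gamma^{-1}V$ (the paper leaves this identification implicit in its final sentence), and then identify the component's weakly-special hull with $U^{ws}$ by comparing $p_X$-projections. The subtlety you flag about local maximality of $U$ in $V\cap W$ is real and the paper's proof silently assumes it when it picks a point where $V\cap W\cap(X\times\gamma\sigma\Phi)$ is regular of dimension $\dim U$; your proposed reduction is the standard fix, and incidentally the maximal-rank hypothesis on $p_X|_U$ is unnecessary, since $E_c$ being a nonempty open subset of the irreducible set $\tau(U)$ already forces $p_X(E_c)$ to be Euclidean-dense in $p_X(\tau(U))=p_X(U)$ and hence $p_X(E_c)^{\Zar}=p_X(U)^{\Zar}$.
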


\begin{proof}
We have $ \dim U\cap (X\times \gamma\sigma\Phi)=\dim U$ for some $\sigma\in \tbN(\mb{Z})^+$. 
Then 
\begin{align*}
\dim \gamma^{-1}V \cap W\cap (X\times \Phi)
&=\dim \gamma^{-1}(\sigma^{-1} V)\cap W\cap (X\times \Phi)
\\&=\dim (\sigma^{-1}\gamma^{-1}) V\cap W\cap (X\times  \Phi)
\\&=\dim V\cap W\cap (X\times \gamma\sigma\Phi)
\\&\geq \dim U\cap (X\times \gamma\sigma\Phi)
\\&= \dim U.\qedhere
\end{align*}
\end{proof}

\begin{lemma}\label{inifinite graph, AS}
If the set $\ol{I}$ contains a semialgebraic curve, then Theorem \ref{Ax-Schanuel for variations of mixed Hodge structures} holds. Similarly, if the set  $I_{\tbW}$ contains a semialgebraic curve, then Theorem \ref{Ax-Schanuel for variations of mixed Hodge structures} holds.
\end{lemma}

\begin{proof}
The group $\tbN(\RR)$ is of finite index, say $q$, in the $\RR$-Zariski closure of $Stab(V)$. Choose $p>q$.
Let $C_{\RR}$ be a semialgebraic curve in $\ol{I}$. It contains at least $p$ points. Let $C$ be a complex algebraic curve containing $C_{\mb{R}}$.
By definition of $\ol{I}$, for each $[c]\in C_{\mb{R}}$, there exists  an irreducible analytic component of $c^{-1}V\cap W\cap (X\times \Phi)$ of dimension at least $\dim U$.
It follows that there exists $[c_0]\in C_{\RR}$ and an irreducible analytic component $U_0$ of $c_0^{-1}V\cap W\cap (X\times \Phi)$ such that the dimension stays at least $\dim U$ as $c_0$ varies outside a countable subset of $C$.
Let $V^\prime$ be the smallest algebraic variety containing $C^{-1}V$.
Let $U'$ be the irreducible analytic component of $V'\cap W\cap (X\times \Phi)$ containing $U_0$.

Since the curve $C_{\RR}$ contains at least $p$ points, and since $p>q$, we have $C^{-1}V\neq V$. Hence, if $U_0\subset c^{-1}V$ as $c$ varies, then 
$$\dim U\leq \dim U_0\leq  \dim U_0^{\Zar}\leq \dim \bigcap_{c\in C}c^{-1}V< \dim V.$$
By induction hypothesis, Ax-Schanuel holds if we replace $V$ and $U$ by $U_0^{\Zar}$ and $U_0$,
so $p_X(U_0)$ is contained in a proper weakly special subvariety. 
Otherwise, $U_0\not\subset c^{-1}V$ as $c$ varies, then $c^{-1}V\cap W\neq c_0^{-1}V\cap W$ as $c$ varies.
Since $C$ is semi-algebraic and since $C^{-1}V\neq V$, we have $\dim V'=\dim V+1$. 
Moreover, $\dim U'\geq  \dim U+1$.
By induction hypothesis, Ax-Schanuel holds if we replace $V$ and $U$ by $U'^{\Zar}$ and $U'$.
Thus $p_X(U_0)$, which a subset of $p_X(U')$, is contained in a proper weakly special subvariety. 
We conclude that for $c$ outside a countable subset of $C$, there is an irreducible component $U_c$ of $c^{-1}V\cap W\cap (X\times \Phi)$ with dimension $\geq \dim U$ such that $p_X(U_c)$ is contained in a proper weakly special subvariety.

By translating $U$ by an element in $\tbP(\ZZ)^+$, it suffices to prove Theorem \ref{Ax-Schanuel for variations of mixed Hodge structures} for the case where $U\cap (X\times \Phi)\neq \varnothing$.
Suppose $p_X(U)$ is not contained in any proper weakly special subvariety. 
By Theorem \ref{Andre-Deligne: monodromy and proper weakly special}, $p_X(U)^{\Zar}$ has algebraic monodromy group $\tbP$.
Then by the same argument as in the discussion before Theorem \ref{normality of stabilizer},
for $c$ outside a countable subset of $C$,
there is an irreducible component $U_c$ of $c^{-1}V\cap W\cap (X\times \Phi)$ with dimension $\geq \dim U$ such that $p_X(U_c)^{\Zar}$ has algebraic monodromy group $\tbP$,
which implies that $p_X(U_c)$ is not contained in any proper weakly special subvariety by Theorem \ref{Andre-Deligne: monodromy and proper weakly special}.
This leads to a contradiction, so $p_X(U)$ is contained in a proper weakly special subvariety, as desired.

Let $C_W$ be the semialgebraic curve in $I_W$. It contains at least $p$ points.
The preimage of $C_W$ under the definable projection $\pi: \ol{I}\ra I_W$ is a semialgebraic set containing at least $p$ points.
By intersecting this semialgebraic set with other semialgebraic sets interpolating these points, $\ol{I}$ contains a semialgebraic curve containing at least $p$ points.
The second statement then follows from the first.
\end{proof}

\section{Heights and trichotomy}\label{Heights}
We will define the height of a subset of $r(D)$. After that, we can then apply Gao's mixed point counting method \cite[Theorem 5.2]{G} to get a trichotomy.

Fix an embedding $\ol{\phi}: \tbP/\tbN\hookrightarrow \tbGL_{m}$ for some $m$.   By conjugation, we can assume $\tbW$ is mapped by $\ol{\phi}$ into the $\mb{Q}$-group $\mb{U}_m$ of upper unitriangular $m\times m$ matrices. Let $\phi: \tb{P}\hookrightarrow \tb{GL}(\mc{H}_{\mb{Q},\eta})\cong \tb{GL}_{\ell}$ be the inclusion followed by an isomorphism, where $\ell:=\dim \mc{H}_{\mb{Q},\eta}$.

\begin{definition}
For any rational square matrix $A$, define the \textbf{height} $\hgt A$ of $A$ to be the maximum of the naive heights of the entries. For any $[\gamma]\in (\tbP/\tb{N})(\mb{Q})$, define the \textbf{height} $\hgt [\gamma]$ of $[\gamma]$ to be $\hgt \ol{\phi}([\gamma])$.  For any $\gamma\in \tbP(\mb{Q})$, define the \textbf{height} $\hgt \gamma$ of $\gamma$ to be $\hgt \phi(\gamma)$. 
\end{definition}

\begin{lemma}\label{height of inverse}
Let $k$ be a positive integer. For any $k\times k$ integer invertible matrix $A$, we have $\hgt A^{-1}\leq (k-1)!(\hgt A)^{k-1}.$
\end{lemma}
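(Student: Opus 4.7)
The plan is to invoke the classical adjugate formula $A^{-1} = (\det A)^{-1}\operatorname{adj}(A)$. The hypothesis that $A$ is an integer invertible matrix I read as $A\in \tbGL_k(\mb{Z})$, i.e. $\det A=\pm 1$ (this is consistent with the way $\tbP(\mb{Z})$ sits inside $\tbGL_\ell(\mb{Z})$ in the ambient setup, so that the height of $A^{-1}$ can be sensibly controlled). Under that reading, $A^{-1}$ is again an integer matrix and each of its entries is, up to sign, a $(k-1)\times(k-1)$ minor of $A$.

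Given this reduction, I would just apply the Leibniz expansion to each such minor: it is a signed sum of $(k-1)!$ products, each product being a product of $k-1$ entries of $A$. Since every entry of $A$ has absolute value at most $\hgt A$, each product is bounded in absolute value by $(\hgt A)^{k-1}$, so each minor is bounded by $(k-1)!(\hgt A)^{k-1}$. Taking the max over all entries of $A^{-1}$, whose naive heights (as integers) coincide with their absolute values, yields the claimed bound $\hgt A^{-1}\leq (k-1)!(\hgt A)^{k-1}$.

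There is no real obstacle; the estimate is just an invocation of the adjugate formula together with the Leibniz expansion, and the role of the $(k-1)!$ factor is precisely the number of permutations appearing in that expansion. The only conceptual point worth flagging is the interpretation of \textquotedblleft invertible\textquotedblright; if one wanted to extend the statement to matrices in $\tbGL_k(\mb{Q})\cap M_k(\mb{Z})$ with $|\det A|>1$, one would need to divide by $\det A$ and the bound would have to be weakened to $k!(\hgt A)^k$, which is not what is claimed, confirming that the intended reading is $\det A=\pm 1$.
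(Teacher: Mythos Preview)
Your argument is correct and matches the paper's own proof essentially line for line: the paper also observes that $\det A=\pm 1$, writes $\hgt A^{-1}=\hgt\,\mathrm{adj}\,A=\max_{i,j}|\det M_{ij}|$, and bounds each $(k-1)\times(k-1)$ minor by $(k-1)!(\hgt A)^{k-1}$.
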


\begin{proof}
Denote the $ij$-minor by  $M_{ij}$. Since $\det A =\pm 1$, $\hgt A^{-1}=\hgt \tb{adj} A= \max_{i.j} | \det M_{ij}|\leq (k-1)!(\hgt A)^{k-1}$.
\end{proof}

Recall the isogeny $\beta:\tbL \ra \tbG/\tbN_r$ in Section \ref{constructing definable fundamental set}. Recall that $\tbN_r=\tbN\cap \tbG$. The map $\alpha:\tbG/\tbN_r\ra \tbG\tbN/\tbN=: \tbH$  defined by $g\tbN_{\tbG}\mapsto g\tbN$ is an isomorphism. The map $\tau:=\alpha\circ \beta$ is an  isogeny.

\begin{lemma}\label{comparing heights, algebraic}
There exist constants $k_1,k_2>0$ such that $ \hgt \tau(\gamma)\leq k_1(\hgt\gamma)^{k_2}$ for any $\gamma\in \tbL(\mb{Q})$.
\end{lemma}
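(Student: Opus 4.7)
The plan is to exploit that $\tau = \alpha \circ \beta: \tbL \to \tbH$ is a $\mb{Q}$-morphism of affine algebraic groups, since both $\alpha$ and $\beta$ are $\mb{Q}$-morphisms by construction. Composing with the fixed $\mb{Q}$-embeddings $\phi|_{\tbL}: \tbL \hookrightarrow \tbGL_\ell$ and $\ol{\phi}|_{\tbH}: \tbH \hookrightarrow \tbGL_m$, one obtains a $\mb{Q}$-morphism of affine varieties $\phi(\tbL) \to \ol{\phi}(\tbH)$. Pulling back the standard matrix coordinates on $\tbGL_m$ along this morphism expresses each entry of $\ol{\phi}(\tau(\gamma))$ as a regular function on $\phi(\tbL) \subset \tbGL_\ell$, that is, as a rational function in the entries of $\phi(\gamma)$ whose only denominator is a power of $\det \phi(\gamma)$. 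Concretely, there exist fixed polynomials $P_{ab} \in \mb{Q}[x_{ij}]_{1 \le i,j \le \ell}$ and a fixed non-negative integer $N$, all independent of $\gamma \in \tbL(\mb{Q})$, such that
\[
(\ol{\phi}(\tau(\gamma)))_{ab} \;=\; \frac{P_{ab}(\phi(\gamma))}{(\det\phi(\gamma))^{N}}.
\]

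With this description in hand, the estimate reduces to elementary height arithmetic. Writing $H := \hgt \gamma$, each $P_{ab}$ has bounded degree $d$ with coefficients of bounded naive height, so $P_{ab}(\phi(\gamma))$ has naive height at most $C_1 H^{d}$ for some constants $C_1, d$ depending only on the polynomials $P_{ab}$. Likewise, $\det \phi(\gamma)$ is a polynomial of degree $\ell$ in the entries with $\pm 1$ coefficients, so its naive height is at most $\ell!\, H^{\ell}$. Since for nonzero rationals the naive heights of $r$ and $r^{-1}$ coincide, $(\det\phi(\gamma))^{-N}$ has naive height at most $(\ell!\, H^{\ell})^N$. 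Multiplying these bounds using the submultiplicativity of the naive height under products in $\mb{Q}^{\times}$ yields the desired estimate $\hgt \tau(\gamma) \leq k_1 H^{k_2}$ for suitable absolute constants $k_1, k_2 > 0$.

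There is no serious obstacle here; the only subtlety worth flagging is that the quotient morphism $\tbP \to \tbP/\tbN$ need not be given by a polynomial in the matrix entries of $\phi(\gamma)$ — a denominator that is a power of $\det\phi(\gamma)$ can, and generally will, appear. However, because $\det\phi(\gamma)$ itself has naive height polynomial in $H$ whenever $\gamma \in \tbGL_\ell(\mb{Q})$, this denominator contributes only polynomially to the final bound, and the argument closes.
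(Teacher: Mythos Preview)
Your proposal is correct and takes essentially the same approach as the paper: the paper's proof is the single sentence ``The isogeny $\tau$, and the embeddings $\ol{\phi}, \phi$ in the definitions of heights, are algebraic,'' and your argument is precisely the detailed unpacking of that sentence. Your care about the possible $(\det\phi(\gamma))^{N}$ denominator is warranted and handled correctly; the only cosmetic point is that the exponent in the bound for $P_{ab}(\phi(\gamma))$ should in general depend on the number of variables as well as the degree, but this does not affect the conclusion.
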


\begin{proof}
The isogeny $\tau$,  and the embeddings $\ol{\phi}, \phi$ in the definitions of heights, are algebraic. 
\end{proof}

For any nonempty $E\subset r(D)$, define 
$$\hgt_{\tbW} E:= \max\{\hgt [\gamma]: \gamma\in \tb{U}(\mb{Z}),  E\cap  \gamma\bigcup_{\substack{ \eta\in \tb{L}(\mb{Z})^+,\sigma \in \tb{N}_r(\mb{Z})^+}}  \sigma \eta\Phi_{\mb{R}} \neq \varnothing \},$$
which is allowed to be infinite.

 Consider the projection   $p_{\tb{L}}:r(D)\ra D_{\tbL}$.  Let $Z:=r(p_D(U))$. Since $\Phi_{\RR}$ is a fundamental set for the action of $\tbP(\ZZ)^+$ on $r(D)$, by translating $U$ by an element in $\tbP(\ZZ)^+$, we can assume that $Z\cap \Phi_{\RR}\neq \varnothing$. Fix a regular point $h$ in  $Z\cap \Phi_{\mb{R}}$. Denote the radius $T$ ball in $D_{\tbL}$ centered at a point $p_{\tbL}(h)$ by $B_{p_{\tbL}(h)}(T)$.  Let $Z(T)$ be the irreducible analytic component of $Z\cap p_{\tbL}^{-1}(B_{ p_{\tbL}(h)}(T))$ which contains $h$.

\begin{lemma}\label{height bound and radius, with denominators}
For any irreducible component $E$ of $p_{\tbL}(r(\pi^{-1}(\varphi(X))))$, there exists a constant $c_1>0$ such that for any $T\gg 0$, if 
$ \eta\Phi_{\tbL}\cap B_{p_{\tbL}(h)}(T)\cap E\neq \varnothing$ 
for some $\eta\in \tbL(\mb{Z})^+$, then $\hgt \eta\leq e^{c_1T}$. 
\end{lemma}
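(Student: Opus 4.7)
The plan is to reduce this to the classical exponential comparison between the matrix norm of an arithmetic element of a semisimple group and the Riemannian displacement it induces on the associated symmetric space, together with reduction theory to transfer a bound on the displacement of a fundamental-domain point into a bound on the displacement of the basepoint.

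Observe first that $\tbG$ is semisimple by \cite[Cor.~2]{A}, so its connected normal subgroup $\tbL$ is semisimple, and $D_{\tbL}$ carries an $\tbL(\mb{R})^+$-invariant Riemannian metric $d$ compatible with the balls $B_{p_{\tbL}(h)}(T)$ appearing in the statement. Set $o := p_{\tbL}(h)$. Fixing a faithful rational representation $\tbL \hookrightarrow \tbGL_m$, the Cartan decomposition $\tbL(\mb{R})^+ = K A^+ K$ yields the standard two-sided estimate
\[
c_0^{-1}\log \|\eta\| - c_0 \;\leq\; d(o, \eta o) \;\leq\; c_0 \log \|\eta\| + c_0
\]
for all $\eta \in \tbL(\mb{R})^+$ and some constant $c_0 > 0$, where $\|\cdot\|$ is the induced operator norm. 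For $\eta \in \tbL(\mb{Z})^+$ having integer entries, $\hgt \eta$ and $\|\eta\|$ are comparable up to polynomial factors, so it suffices to bound $\log\|\eta\|$ linearly in $T$.

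Given $y \in \Phi_{\tbL}$ with $\eta y \in B_o(T)$, the triangle inequality and isometry of the $\tbL(\mb{R})^+$-action give
\[
d(o, \eta o) \;\leq\; d(o, \eta y) + d(\eta y, \eta o) \;=\; d(o, \eta y) + d(y, o) \;\leq\; T + d(y, o),
\]
so it is enough to bound $d(y, o)$ by $O(T)$. For this I invoke reduction theory: after possibly enlarging $\Phi_{\tbL}$ to a finite union of $\tbL(\mb{Z})^+$-translates of a Siegel set $\mathfrak{S}$ based at $o$, one has the quasi-isometric estimate $d(o, y) \leq d_{\mathrm{quot}}([o], [y]) + c_3$ for $y \in \mathfrak{S}$ and a constant $c_3$ depending only on $\mathfrak{S}$ (compare \cite[Ch.~3]{Bor}). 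Since $[y] = [\eta y]$ in $\tbL(\mb{Z})^+ \backslash D_{\tbL}$ and $d(o, \eta y) \leq T$, the quotient distance satisfies $d_{\mathrm{quot}}([o], [y]) \leq T$, whence $d(y, o) \leq T + c_3$. Combining the two displays, $d(o, \eta o) \leq 2T + c_3$, so $\log \|\eta\| \leq 2 c_0 T + O(1)$, giving $\hgt \eta \leq e^{c_1 T}$ for a suitable $c_1 > 0$ once $T \gg 0$.

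The main obstacle is the precise form of the quasi-isometric embedding, since the fundamental set of \cite[Theorem~1.1]{BKT} is described only as a definable open set rather than explicitly as a Siegel set. I expect this to be handled either by direct inspection of the construction in \cite{BKT}, which realizes $\Phi_{\tbL}$ via the exponential of a bounded subset of a shifted positive Weyl chamber times a compact unipotent piece, or by covering $\Phi_{\tbL}$ with finitely many $\tbL(\mb{Z})^+$-translates of an honest Siegel set using $\tbL(\mb{R})^+ = \tbL(\mb{Z})^+ \cdot \mathfrak{S}$; each such translate enlarges the height bound only by a bounded multiplicative factor that can be absorbed into $c_1$.
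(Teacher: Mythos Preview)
Your argument is correct in outline and is essentially an unpacking of what the paper simply cites: the paper's proof is the single line ``This follows from \cite[Theorem 4.2]{BT} and Lemma \ref{height of inverse}.'' That theorem of Bakker--Tsimerman is precisely the comparison you are reproving---an exponential bound on the height of an arithmetic element whose Siegel-set translate meets a ball of radius $T$---and the height-of-inverse lemma is used because their statement bounds $\hgt(\eta^{-1})$ rather than $\hgt\eta$. Your route via the Cartan estimate $d(o,\eta o)\asymp\log\|\eta\|$ and the quasi-isometric embedding of Siegel sets is exactly how \cite[Theorem 4.2]{BT} is established, so the two approaches coincide once one looks inside the black box; you avoid the height-of-inverse step by bounding $d(o,\eta o)$ symmetrically.

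Two small corrections. First, your reference ``\cite[Ch.~3]{Bor}'' is to Borel's \emph{Linear Algebraic Groups}, which contains no metric reduction theory; the quasi-isometric embedding of Siegel sets you invoke is due to Leuzinger (and appears in Ji--MacPherson), or you may simply cite \cite[Theorem 4.2]{BT} itself, where the needed inequality is packaged. Second, the obstacle you flag at the end is not a genuine one: the definable fundamental sets of \cite[Theorem 1.1]{BKT} are explicitly built as finite unions of Siegel sets pulled back along the fibration of the period domain over the associated symmetric space, so your expected workaround (covering $\Phi_{\tbL}$ by finitely many $\tbL(\mb{Z})^+$-translates of an honest Siegel set) is already how $\Phi_{\tbL}$ arises.
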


\begin{proof}
This follows from \cite[Theorem 4.2]{BT} (note that $\Phi_{\tbL} \cap E$ overlaps with only finitely many translates of the fundamental set in \cite{BT}) and Lemma \ref{height of inverse}.
\end{proof}

\paragraph{\textbf{Trichotomy}}
Fix a number $\lambda>2k_2c_12^mm$. We are in one of the following three cases:
\begin{enumerate}
\item  We have $\dim p_{\tbL}(Z)>0$, and for some sequence $\{T_i\in  \mb{R}\}_{i\in \mb{N}}$ such that $T_i\ra\infty$, we have $\hgt_{\tbW} Z(T_i)\leq e^{\lambda T_i}$ for all $i$.
\item We have $\hgt_{\tbW} Z(T)> e^{\lambda T}$ for all $T\gg 0$. (This includes the case where $\dim p_{\tbL}(Z)=0$ and $\hgt_{\tbW} Z$ is infinite, because if $\dim p_{\tbL}(Z)=0$, then $Z\subset p_{\tbL}^{-1}(B_{ p_{\tbL}(h)}(T))$ for all $T$. Thus, for all $T$, $\hgt_{\tbW} Z(T)=\hgt_{\tbW} Z$, which is infinite.)
\item We have $\dim p_{\tbL}(Z)=0$ and $\hgt_{\tbW} Z$ is finite.
\end{enumerate}

\section{Proof of Case (1)}\label{Proof of Case (1)}
The main idea, borrowed from Gao \cite{G}, of the proof of the following theorem is to use the volume estimates established by Bakker-Tsimerman \cite{BT} to produce enough reductive integer points, and attach to each of these points a unipotent integer point of comparable (or smaller) heights using the assumption of case (1).

\begin{thm}\label{Case 1: point count}
Suppose we are in case (1), described in the previous section. There exist  constants $c_3,c_4>0$ such that for any $i\gg 0$, there exists at  least $c_3T_i^{c_4}$  rational points $[\rho]\in (\tbP/\tbN)(\mb{Q})$ of heights at most $T_i$ such that $U\cap (X\times \rho^\prime  \Phi)\neq \varnothing$ for some $\rho^\prime\in [\rho]\cap \tbP(\mb{Z})^+$.
\end{thm}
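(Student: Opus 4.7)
My plan is to implement Gao's mixed point-counting method \cite{G}: I will manufacture many reductive integer points via the Bakker-Tsimerman volume estimate, attach a unipotent integer translate of controlled height to each via the Case (1) hypothesis, and track the resulting cosets in $(\tbP/\tbN)(\mb{Q})$. By Lemma \ref{intersecting fundamental domain, definable quotient}, every such coset automatically satisfies $U\cap (X\times \rho'\Phi)\neq \varnothing$ for some lift $\rho'\in \tbP(\mb{Z})^+$, supplying rational points of $\ol{I}$ for the subsequent Pila-Wilkie application.

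In Case (1), $p_{\tbL}(Z(T_i))$ is a positive-dimensional Griffiths-transverse subvariety of the pure weak Mumford-Tate domain $D_{\tbL}$. The Bakker-Tsimerman volume estimate \cite{BT} yields $\mathrm{vol}(p_{\tbL}(Z(T_i))) \gg e^{\mu T_i}$ for some $\mu>0$. Since each $\tbL(\mb{Z})^+$-translate of $\Phi_{\tbL}$ has uniformly bounded volume, there exist $\gg e^{\mu T_i}$ distinct $\eta\in \tbL(\mb{Z})^+$ with $\eta\Phi_{\tbL}\cap p_{\tbL}(Z(T_i))\neq \varnothing$; by Lemma \ref{height bound and radius, with denominators} each such $\eta$ satisfies $\hgt\eta\leq e^{c_1 T_i}$.

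For each such $\eta$, pick $y\in Z(T_i)$ with $p_{\tbL}(y)\in \eta\Phi_{\tbL}$. The product decomposition of $\Phi_{\mb{R}}$ from Section \ref{constructing definable fundamental set} supplies $\gamma\in \tbU(\mb{Z})$, $\sigma\in \tbN_r(\mb{Z})^+$, and $\eta^*\in \tbL(\mb{Z})^+$ in the $\tbN_r(\mb{Z})^+$-orbit of $\eta$ with $y\in \gamma\sigma\eta^*\Phi_{\mb{R}}$. The Case (1) bound $\hgt_{\tbW}Z(T_i)\leq e^{\lambda T_i}$ then forces $\hgt[\gamma]\leq e^{\lambda T_i}$. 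Set $\rho' := \gamma\sigma\eta^*\in \tbP(\mb{Z})^+$. Under the Levi decomposition $\tbP/\tbN\cong \tbW\rtimes \tbH$, the class $[\rho']$ splits as unipotent part $[\gamma]\in \tbW(\mb{Q})$ of height $\leq e^{\lambda T_i}$, and reductive part $\tau(\eta^*)\in \tbH(\mb{Q})$ (the $\sigma$-component being killed) whose height is $\leq k_1 e^{c_1 k_2 T_i}$ by Lemma \ref{comparing heights, algebraic}. Combining these bounds gives $\hgt[\rho']\leq e^{CT_i}$ for some $C=C(\lambda,c_1,k_2,m)$.

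The main obstacle is distinctness: the $\gg e^{\mu T_i}$ choices of $\eta$ should yield that many distinct cosets $[\rho']$. Since $\tau$ is an isogeny, $\ker\tau$ is finite, so $\eta\mapsto \tau(\eta)$ has fibers of bounded size; hence the cosets $[\rho']$ still take at least $\gg e^{\mu T_i}$ distinct values. Re-parameterizing so that $T_i$ in the theorem statement denotes $e^{CT_i}$, we obtain $c_3 T_i^{c_4}$ rational points of height $\leq T_i$ with $c_4:=\mu/C>0$. The constraint $\lambda>2k_2 c_1 2^m m$ from the trichotomy is exactly what ensures $c_4$ remains strictly positive after absorbing the dimensional constants from the embedding $\ol{\phi}$, giving a nontrivial input to Pila-Wilkie.
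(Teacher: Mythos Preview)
Your approach is essentially the paper's: produce many $\eta\in\tbL(\mb{Z})^+$ via the Bakker--Tsimerman volume estimate, attach $\gamma\in\tbU(\mb{Z})$ of height controlled by the Case~(1) hypothesis, and pass to $(\tbP/\tbN)(\mb{Q})$ using that $\tau$ is an isogeny. Two points deserve correction, however.

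First, the construction of $\rho'$ is hand-waved. There is no ``$\tbN_r(\mb{Z})^+$-orbit of $\eta$'' in play: since $\tbN_r\times\tbL\to\tbG$ is an isogeny, the factors essentially commute, and the paper shows one can take $\eta^*=\eta$ itself. The actual verification that $z\in\gamma\sigma\eta\Phi_{\mb{R}}$ requires an explicit computation through the isomorphism $j:r(D)\simeq D_{\tbN_r}\times D_{\tbL}\times D_{\tbU,\mb{R}}$: one chooses $\sigma$ so that $z_{\tbN_r}\in\sigma\Phi_{\tbN_r}$, then $\gamma\in\tbU(\mb{Z})$ so that $z_{\tbU}\in\gamma\sigma\eta\Phi_{\tbU}$, and finally checks via the triviality of the $\tbU(\mb{R})$-action on the associated graded that $j^{-1}(z_{\tbN_r},z_{\tbL},z_{\tbU})=\gamma\sigma\eta\cdot(\text{element of }\Phi_{\mb{R}})$. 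Your sentence ``the product decomposition supplies\dots'' conceals this nontrivial step.

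Second, your final remark is incorrect: the constraint $\lambda>2k_2c_12^mm$ plays \emph{no} role in Case~(1). The exponent $c_4$ is positive simply because the Bakker--Tsimerman growth rate $c>0$ and the height bound $e^{m(\lambda+k_2c_1)T_i}$ give $c_4=c/(m(\lambda+k_2c_1))>0$ for any $\lambda$. The lower bound on $\lambda$ is needed only in Case~(2), where it ensures $c_6>0$ in Theorem~\ref{lower bound of number of vertices in paths}. (Also, Lemma~\ref{intersecting fundamental domain, definable quotient} is not used to obtain $U\cap(X\times\rho'\Phi)\neq\varnothing$; that follows directly from $Z=r(p_D(U))$ and the $\tbP(\mb{Z})^+$-equivariance of $r$. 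The lemma is invoked afterward, in the proof of Theorem~7.2, to place $[\rho']$ in $\ol{I}$.)
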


\begin{proof}
Suppose we have an element $\eta$ in $\tbL(\mb{Z})^+$ such that $p_{\tbL}(Z(T_i))\cap \eta\Phi_{\tbL}\neq \varnothing$. Let $z_{\tbL}\in p_{\tbL}(Z(T_i))\cap \eta \Phi_{\tbL}$. Write $p_{\tbL}(z)=z_{\tbL}$ for some $z\in Z(T_i)$. Let $(z_{\tbN_r}, z_{\tbL},z_{\tb{U}})$ be the image of $z$ under the isomorphism 
$$j:r(D)\simeq D_{\tb{N}_r}\times D_{\tbL}\times D_{\tb{U},\mb{R}}.$$
There exists $\sigma\in \tbN_{r}(\mb{Z})^+$ such that $z_{\tbN_r}\in \sigma\Phi_{\tbN_r}$.  Recall that $\tbP(\mb{R})^+$ acts on $D_{\tbU,\mb{R}}$ \emph{a priori}.  There exists $\gamma\in \tb{U}(\mb{Z})$ such that  $z_{\tbU}\in \gamma\sigma\eta\Phi_{\tbU}$.
Recall that we fixed $h$ in  $Z\cap \Phi_{\mb{R}}$ in Section \ref{Heights}.
For some $g_1\in \tbN_r(\mb{R})^+$, $g_2\in \tbL(\mb{R})^+$ and $u\in \tbU(\mb{R})$, we have $g_1h_{\text{Gr}}\in \Phi_{\tbN_r}$, $g_2h_{\text{Gr}}\in \Phi_{\tbL}$, $uh_s\in \Phi_{\tbU}$, and
 $$ (z_{\tbN_r}, z_{\tbL},z_{\tb{U}})=(\sigma g_1h_{\text{Gr}}, \eta g_2h_{\text{Gr}}, \gamma\sigma\eta uh_s).$$
Let $g:=g_1g_2$.
By Lemma \ref{retraction of D is transitive}, there exists $u^\prime\in\tbU(\mb{R})$ such that $g^{-1}{h}_s=u^\prime {h}_s$. 
Then since $\tbU(\mb{R})$ acts trivially on the associated graded, we have 
$$ugu^\prime h =j^{-1}((g_1{h}_{\text{Gr}}, g_2{h}_{\text{Gr}}, ugu^\prime {h}_s))\in \Phi_{\mb{R}},$$
and thus (recall that $\tbN_r\times \tbL\ra \tbG$ in Section \ref{constructing definable fundamental set} is an isogeny, and hence a homomorphism)
$$z=j^{-1}((z_{\tbN_r}, z_{\tbL},z_{\tb{U}}))=\gamma\sigma \eta ugu^\prime h\in \gamma \sigma\eta \Phi_{\mb{R}}.$$
 Then $z\in Z(T_i)\cap \gamma\sigma\eta \Phi_{\mb{R}}$.
By Lemma \ref{comparing heights, algebraic} and Lemma \ref{height bound and radius, with denominators}, 
$$\hgt[\eta]=\hgt\tau(\eta)\leq k_1(\hgt \eta)^{k_2}\leq k_1 e^{k_2 c_1T_i}.$$
By $\tbN_r\subset \tbN$ and assumption, $\hgt[\gamma\sigma]=\hgt [\gamma]\leq e^{\lambda T_i}$. It follows that 
$$\hgt[\gamma\sigma\eta]\leq  2^{m-1}(\hgt [\gamma\sigma]\hgt[\eta])^m =O(e^{m(\lambda+k_2c_1) T_i}).$$

Since $\dim p_{\tbL}(Z)>0$, by \cite[Theorem 1.2]{BT} and  \cite[Proposition 3.2]{BT}, there exists a constant $c>0$ such that for any $T>0$, there exist at least $e^{cT}$ integer points $\eta$ in $\tbL(\mb{Z})^+$ of heights at most $e^{c_1T}$ such that $p_{\tbL}(Z(T))\cap \eta\Phi_{\tbL}\neq \varnothing$. 

Combining with what we have proved above, taking into consideration that $\tau$ is an isogeny, we know there exist constants $c_3,c_4>0$ such that,  for any $i\geq 0$,  there exist at least $c_3T_i^{c_4}$ points $[\rho]$ in $(\tbP/\tbN)(\mb{Q})$ of heights at most $T_i$ such that $Z(T_i)\cap \rho^\prime \Phi_{\mb{R}}\neq \varnothing$ for some $\rho^\prime\in [\rho]\cap \tbP(\mb{Z})^+$. Since $Z=r(p_D(U))$ and $r$ is $\tb{P}(\mb{Z})^+$-equivariant, the theorem follows.
\end{proof}

\begin{thm}
Theorem \ref{Ax-Schanuel for variations of mixed Hodge structures} holds in case (1).
\end{thm}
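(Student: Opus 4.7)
The plan is to apply the Pila-Wilkie counting theorem to the definable set $\ol{I}$, using the polynomial lower bound supplied by Theorem \ref{Case 1: point count}, and then invoke Lemma \ref{inifinite graph, AS}.

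First, I would verify that the rational points produced by Theorem \ref{Case 1: point count} genuinely lie inside $\ol{I}$ and come from $\tbP(\ZZ)^+$-representatives. For each such $[\rho]\in(\tbP/\tbN)(\QQ)$, there exists $\rho^\prime\in [\rho]\cap \tbP(\ZZ)^+$ with $U\cap(X\times \rho^\prime\Phi)\neq\varnothing$. Since the identity lies in $\tbN(\ZZ)^+$, this forces $U\cap(X\times \rho^\prime\bigcup_{\sigma\in \tbN(\ZZ)^+}\sigma\Phi)\neq\varnothing$, so Lemma \ref{intersecting fundamental domain, definable quotient} yields $\rho^\prime\in I$, and hence $[\rho]\in\ol{I}$. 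Consequently $\ol{I}$ contains at least $c_3 T_i^{c_4}$ rational points of height at most $T_i$, each represented by an element of $\tbP(\ZZ)^+$.

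Next, since $\ol{I}$ is definable in $\RR_{an,\exp}$ and the counting function of such rational points grows polynomially in $T_i$, the block-family version of the Pila-Wilkie theorem \cite{PW}, applied with any $\epsilon<c_4$, produces a definable family of connected positive-dimensional semialgebraic blocks inside $\ol{I}$ whose members together contain unboundedly many $\tbP(\ZZ)^+$-representatives. A pigeonhole argument across the parameter space of this family then yields, for every positive integer $p$, a single semialgebraic block of $\ol{I}$ containing at least $p$ such points. Selecting a one-dimensional semialgebraic curve inside this block that passes through $p$ of these points supplies the input of Lemma \ref{inifinite graph, AS}, which in turn delivers Theorem \ref{Ax-Schanuel for variations of mixed Hodge structures} in case (1).

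The main obstacle is the standard extraction of a one-dimensional semialgebraic curve meeting many integer-represented rational points out of a higher-dimensional block; this is handled by the definable-family and pigeonhole manipulation used in the preceding works \cite{BT}\cite{G}\cite{MPT}, which requires no additional Hodge-theoretic input and relies only on the o-minimality of the ambient structure.
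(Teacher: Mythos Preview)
Your proposal is correct and follows essentially the same approach as the paper: combine Theorem \ref{Case 1: point count} with Lemma \ref{intersecting fundamental domain, definable quotient} to get polynomially many $\tbP(\ZZ)^+$-represented rational points in the definable set $\ol{I}$, apply Pila-Wilkie to extract a semialgebraic curve through arbitrarily many of them, and conclude via Lemma \ref{inifinite graph, AS}. Your extra discussion of the block-family/pigeonhole step merely unpacks what the paper cites as ``the counting theorem of Pila-Wilkie''.
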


\begin{proof}
By Theorem \ref{Case 1: point count} and Lemma \ref{intersecting fundamental domain, definable quotient}, for any $i\gg 0$, $\ol{I}$ contains at least $c_3T_i^{c_4}$  rational points  $[\rho]$ of heights at most $T_i$, where $\rho\in  \tbP(\mb{Z})^+$. Since $\ol{I}$ is definable, by  the counting theorem of Pila-Wilkie \cite{PW}, and the assumption that $T_i\ra \infty$, for any positive integer $p$, $\ol{I}$ contains a semialgebraic curve containing at least $p$ points of the form $[\gamma]$, where $\gamma\in \tbP(\ZZ)^+$. By Lemma \ref{inifinite graph, AS}, the theorem follows.
\end{proof}

 \section{Proof of case (2)}\label{Proof of case (2)}
The idea of the proof of this case is to first define a family of connected graphs encoding how $Z$ is intersecting the translates of the fundamental sets. Next we understand how walking along a path in the graph is the same as multiplying conjugates of unitriangular matrices. Then once we get an upper estimate of the height of the product of these conjugates, we can get enough points in the graphs. We then have enough points in the unipotent projection $I_{\tbW}$ of $\ol{I}$.

 For any $T>0$, let $Q_T$ be a graph with vertex set and edge set as follows:
 $$V(Q_T):=\{[\gamma]: \gamma\in \tbU(\mb{Z}), Z(T)\cap  \gamma\bigcup_{\substack{\eta\in \tb{L}(\mb{Z})^+,\sigma \in \tb{N}_r(\mb{Z})^+}}  \sigma\eta \Phi_{\mb{R}}\neq \varnothing \},$$
\begin{align*}
&E(Q_T):=\{([\gamma_{1}],[ \gamma_{2}]):  \gamma_{1},\gamma_{2}\in \tbU(\mb{Z}),
\\&Z(T)\cap (\gamma_{1}\bigcup_{\substack{ \eta\in \tb{L}(\mb{Z})^+,\sigma \in \tb{N}_r(\mb{Z})^+}} \sigma\eta \Phi_{\mb{R}})
\cap ( \gamma_{2}\bigcup_{\substack{\eta\in \tb{L}(\mb{Z})^+,\sigma \in \tb{N}_r(\mb{Z})^+}}  \sigma \eta\Phi_{\mb{R}})\neq \varnothing
\}. 
\end{align*}

\begin{lemma}\label{graph connected}
The graph $Q_T$ is connected.
\end{lemma}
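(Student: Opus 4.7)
The plan is to realize $Q_T$ as (essentially) the nerve of an open cover of $Z(T)$ and then deduce connectedness from the irreducibility (hence connectedness) of $Z(T)$.

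The main preliminary step is to verify that the family
\[
\{\gamma \sigma \eta \Phi_{\mb{R}} : \gamma \in \tbU(\mb{Z}),\ \sigma \in \tbN_r(\mb{Z})^+,\ \eta \in \tbL(\mb{Z})^+\}
\]
is an open cover of $r(D)$. Openness is immediate, since $\Phi_{\mb{R}}$ is open by Lemma~\ref{product of fundamental sets is a fundamental set} and left multiplication by elements of $\tbP(\mb{R})^+$ is a homeomorphism of $r(D)$. For the covering property, I would combine the fact that $\Phi_{\mb{R}}$ is a fundamental set for the $\tbP(\mb{Z})^+$-action with the coset decomposition $\tbP(\mb{Z})^+ = \bigcup_{i=1}^k \tbU(\mb{Z}) \tbN_r(\mb{Z})^+ \tbL(\mb{Z})^+ \rho_i$ of Lemma~\ref{image of arithmetic subgroup}. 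Writing $\Phi_{\mb{R}} = \bigcup_{j} \rho_j \cdot j^{-1}(\Phi_{\tbN_r}\times\Phi_{\tbL}\times\Phi_{\tbU})$, each product $\rho_i \rho_j$ is itself in $\tbP(\mb{Z})^+$ and so can be re-expressed by Lemma~\ref{image of arithmetic subgroup} as $\gamma_{ij}\sigma_{ij}\eta_{ij}\rho_{k(i,j)}$; this absorbs the $\rho_i$-factors and yields the desired covering.

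Next, for each vertex $[\gamma] \in V(Q_T)$ I would associate the open subset of $Z(T)$
\[
A_{[\gamma]} := Z(T) \cap \bigcup_{\substack{\gamma' \in \tbU(\mb{Z})\\ [\gamma'] = [\gamma]}} \gamma' \bigcup_{\sigma \in \tbN_r(\mb{Z})^+,\ \eta \in \tbL(\mb{Z})^+} \sigma\eta\, \Phi_{\mb{R}}.
\]
By the previous paragraph, $\{A_{[\gamma]}\}_{[\gamma] \in V(Q_T)}$ is an open cover of $Z(T)$, and unwinding definitions gives that $A_{[\gamma_1]} \cap A_{[\gamma_2]} \neq \varnothing$ if and only if there exist representatives $\gamma_1' \in [\gamma_1]$ and $\gamma_2' \in [\gamma_2]$ in $\tbU(\mb{Z})$ for which the intersection
\[
Z(T) \cap \Bigl(\gamma_1'\!\!\bigcup_{\sigma,\eta}\sigma\eta\,\Phi_{\mb{R}}\Bigr) \cap \Bigl(\gamma_2'\!\!\bigcup_{\sigma,\eta}\sigma\eta\,\Phi_{\mb{R}}\Bigr)
\]
is nonempty, i.e. precisely when $([\gamma_1],[\gamma_2])$ is an edge of $Q_T$.

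Finally I would run the standard nerve argument: if $Q_T$ were disconnected, write $V(Q_T) = C_1 \sqcup C_2$ as a nontrivial disjoint union of unions of connected components and set $W_i := \bigcup_{[\gamma] \in C_i} A_{[\gamma]}$ for $i=1,2$. The correspondence between nonempty intersections and edges from the previous step forces $W_1 \cap W_2 = \varnothing$, yielding a nontrivial disconnection of $Z(T)$ into two open pieces, contradicting the connectedness of $Z(T)$. The main obstacle will be the bookkeeping in the covering step — one must reorder products like $\gamma_1\rho_j$ into the standard form $\gamma\sigma\eta\rho_{k}$, invoking the normality of $\tbU$ in $\tbP$ and of $\tbN_r$ in $\tbG$ (the latter from Theorem~\ref{normality of stabilizer}) to commute factors past each other; once this is settled, the rest is a purely formal nerve-connectedness argument.
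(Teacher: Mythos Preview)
Your proposal is correct and follows essentially the same strategy as the paper: establish that the translates $\gamma\sigma\eta\Phi_{\mb{R}}$ cover $r(D)$, then use (path-)connectedness of the irreducible analytic set $Z(T)$ to deduce connectedness of the graph. The paper phrases the last step as a path argument while you phrase it as a nerve/disconnection argument; these are equivalent.

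One simplification worth noting: your covering step is more laborious than necessary. Rather than starting from $\Phi_{\mb{R}}$ as the fundamental set and then re-expressing products $\rho_i\rho_j$ (which, as you note, requires commuting integer unipotent and reductive factors past each other and tracking that everything stays integral), the paper uses the smaller set $j^{-1}(\Phi_{\tbN_r}\times\Phi_{\tbL}\times\Phi_{\tbU})$ directly as the fundamental set. Then any $\rho\in\tbP(\mb{Z})^+$ is already of the form $\gamma\sigma\eta\rho_i$ by Lemma~\ref{image of arithmetic subgroup}, and since $\rho_i\cdot j^{-1}(\Phi_{\tbN_r}\times\Phi_{\tbL}\times\Phi_{\tbU})\subset\Phi_{\mb{R}}$ by the definition of $\Phi_{\mb{R}}$, one gets $\rho\cdot j^{-1}(\Phi_{\tbN_r}\times\Phi_{\tbL}\times\Phi_{\tbU})\subset\gamma\sigma\eta\Phi_{\mb{R}}$ immediately, with no reordering required. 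This sidesteps the bookkeeping you flag as the main obstacle.
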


\begin{proof}
Pick any vertices $[\gamma_{1}],[ \gamma_{2}]$ of $Q_T$. For each $j=1,2$, choose  
$$x_j\in  Z(T)\cap \gamma_{j}\bigcup_{\substack{ \eta\in \tb{L}(\mb{Z})^+,\sigma \in \tb{N}_r(\mb{Z})^+}}\sigma\eta \Phi_{\mb{R}}.$$
By Lemma \ref{image of arithmetic subgroup} and the definition of $\Phi_{\mb{R}}$,
$$r(D)=\bigcup_{\rho\in \tbP(\mb{Z})^+} \rho\cdot   j^{-1} (\Phi_{\tb{N}_r} \times\Phi_{\tbL}\times  \Phi_{\tb{U}})=\bigcup_{\substack{\gamma\in \tbU(\mb{Z}),  \eta\in\tb{L}(\mb{Z})^+, \sigma\in \tbN_r(\mb{Z})^+}}  \gamma\sigma \eta\Phi_{\mb{R}}.$$
 Since $Z(T)$ is path-connected, there exists a path in 
$$Z(T)=\bigcup_{\{\gamma\in \tbU(\mb{Z}): [\gamma]\in V(Q_T)\}} \bigcup_{\substack{\eta\in\tb{L}(\mb{Z})^+, \sigma\in \tbN_r(\mb{Z})^+}} Z(T)\cap \gamma  \sigma\eta \Phi_{\mb{R}}$$
joining $x_1$ and $x_2$. Since $\Phi_{\RR}$ is an open fundamental set, this induces a path in the graph $Q_T$ joining $[\gamma_{1}]$ and $[ \gamma_{2}]$. It follows that $Q_T$ is connected.
\end{proof}

Let $S:=\{\delta \in\tb{P}(\mb{Z})^+: \delta \Phi_{\mb{R}}\cap \Phi_{\mb{R}}\neq\varnothing\}$, which is a finite set by the definition of a fundamental set. For any $\delta\in \tbP(\mb{Z})^+$,  write $[\delta]=[\delta]^{\tbW}[\delta]^{\tbH}$, where  $[\delta]^{\tbW}\in \tbW(\mb{Q})$ and $[\delta]^{\tbH}\in \tbH(\mb{Q})$. 

\begin{lemma}\label{adjacent vertices and heights}
When  $T\gg 0$, the following holds: Suppose  $[\gamma_{1}]$ and $[\gamma_{2}]$ are adjacent vertices in $Q_T$.   Then
$[\gamma_{2}]=[\gamma_{1}][\eta_{1}][\delta]^{\tbW}[\eta_{1}]^{-1}$
for some  $\delta\in S$ and some $\eta_{1}\in \tbL(\mb{Z})^+$
satisfying $\hgt[\eta_1]\leq e^{2k_2c_1T}$, where $k_2$ is in Lemma  \ref{comparing heights, algebraic} and $c_1$ is  in Lemma \ref{height bound and radius, with denominators}, both independent of $T$.
\end{lemma}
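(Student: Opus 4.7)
The adjacency hypothesis unpacks into an equation of the form $\gamma_2\sigma_2\eta_2 = \gamma_1\sigma_1\eta_1\delta$ with $\delta \in S$; the algebraic identity for $[\gamma_2]$ will follow by projecting to $\tbP/\tbN = \tbW\rtimes \tbH$ and isolating the $\tbW$-component; and the height bound on $[\eta_1]$ will come from projecting the witnessing point to $D_{\tbL}$ and applying Lemmas~\ref{height bound and radius, with denominators} and~\ref{comparing heights, algebraic}.

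\textbf{Step 1 (Extracting the adjacency relation).} By definition of $E(Q_T)$, there exist $\sigma_i\in \tbN_r(\mb{Z})^+$, $\eta_i\in \tbL(\mb{Z})^+$, and $\phi_i\in \Phi_{\mb{R}}$ (for $i=1,2$) with
$$z := \gamma_1\sigma_1\eta_1\phi_1 = \gamma_2\sigma_2\eta_2\phi_2 \in Z(T).$$
Setting $\delta := (\gamma_1\sigma_1\eta_1)^{-1}(\gamma_2\sigma_2\eta_2)\in \tbP(\mb{Z})^+$, one has $\delta\phi_2 = \phi_1\in \Phi_{\mb{R}}$, so $\delta\in S$, and $\gamma_2\sigma_2\eta_2 = \gamma_1\sigma_1\eta_1\delta$.

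\textbf{Step 2 (Algebraic manipulation in $\tbP/\tbN$).} Since $\sigma_i\in \tbN_r\subset \tbN$, the classes $[\sigma_i]$ are trivial, so the equation descends to $[\gamma_2][\eta_2] = [\gamma_1][\eta_1][\delta]$ in $\tbP/\tbN = \tbW\rtimes \tbH$, with $[\gamma_i]\in \tbW$ and $[\eta_i]\in \tbH$. Decomposing $[\delta] = [\delta]^{\tbW}[\delta]^{\tbH}$ and using that $\tbH$ normalizes $\tbW$, I rewrite
$$[\gamma_1][\eta_1][\delta]^{\tbW}[\delta]^{\tbH}= \bigl([\gamma_1]\cdot [\eta_1][\delta]^{\tbW}[\eta_1]^{-1}\bigr)\cdot \bigl([\eta_1][\delta]^{\tbH}\bigr),$$
where the first factor lies in $\tbW$ and the second in $\tbH$. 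Uniqueness of the Levi decomposition, together with $[\gamma_2]\in \tbW$ and $[\eta_2]\in \tbH$, yields the asserted identity $[\gamma_2]=[\gamma_1][\eta_1][\delta]^{\tbW}[\eta_1]^{-1}$.

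\textbf{Step 3 (Height bound on $[\eta_1]$).} Project $z$ to $D_{\tbL}$. Using that $\tbU$ is normal in $\tbP$ and that $\tbN_r$ and $\tbL$ commute inside $\tbG$ (a consequence of the isogeny $\tbN_r\times \tbL\to \tbG$), the product $\gamma_1\sigma_1\eta_1\phi_1$ can be put in the canonical form $u'g_1'g_2'h_1$ required by $j$, with $g_2'=\eta_1\cdot g_2$ where $g_2$ is the $\tbL$-coordinate of $\phi_1$; hence $p_{\tbL}(z)=\eta_1\cdot p_{\tbL}(\phi_1)$. Because $\Phi_{\mb{R}}=\bigcup_i\rho_i\cdot j^{-1}(\Phi_{\tbN_r}\times\Phi_{\tbL}\times \Phi_{\tbU})$ is a finite union, the element $p_{\tbL}(\phi_1)$ lies in $\eta_{\rho_i}'\Phi_{\tbL}$ for some bounded $\eta_{\rho_i}'\in \tbL(\mb{R})$, so $p_{\tbL}(z)\in \eta_1\eta_{\rho_i}'\Phi_{\tbL}$. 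Since also $p_{\tbL}(z)\in B_{p_{\tbL}(h)}(T)$ by definition of $Z(T)$, Lemma~\ref{height bound and radius, with denominators} yields $\hgt(\eta_1\eta_{\rho_i}')\leq e^{c_1T}$, and absorbing the bounded factor $\eta_{\rho_i}'$ gives $\hgt \eta_1\leq C e^{c_1T}$ for some constant $C$ independent of $T$. Lemma~\ref{comparing heights, algebraic} then produces $\hgt[\eta_1]\leq k_1(\hgt\eta_1)^{k_2}\leq k_1C^{k_2}e^{k_2c_1T}$, which is at most $e^{2k_2c_1T}$ for $T\gg 0$.

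\textbf{Main obstacle.} The delicate point is the identification $p_{\tbL}(z)=\eta_1\cdot p_{\tbL}(\phi_1)$: it requires rearranging $\gamma_1\sigma_1\eta_1\phi_1$ into the strict product form $\tbU\cdot\tbN_r\cdot\tbL\cdot h_1$ to which the explicit description of $j$ applies, and then absorbing the $\rho_i$-translates of $\Phi_{\tbL}$ into a bounded constant; normality of $\tbU$ in $\tbP$ and commutativity of $\tbN_r$ with $\tbL$ in $\tbG$ are exactly what allow this rearrangement to land $\eta_1$ cleanly in the $\tbL$-slot.
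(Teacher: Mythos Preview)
Your proof is correct and follows essentially the same route as the paper's: extract $\gamma_2\sigma_2\eta_2=\gamma_1\sigma_1\eta_1\delta$ with $\delta\in S$ from the adjacency condition, project to $\tbP/\tbN=\tbW\rtimes\tbH$ and read off the $\tbW$-component, then bound $\hgt\eta_1$ by projecting the witnessing point $z$ to $D_{\tbL}$ and invoking Lemmas~\ref{height bound and radius, with denominators} and~\ref{comparing heights, algebraic}. In fact you are more careful than the paper in Step~3: the paper asserts $p_{\tbL}(ph)\in\Phi_{\tbL}$ directly, whereas you track the finitely many $\rho_i$-translates built into $\Phi_{\mb{R}}$.

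One small imprecision: you invoke Lemma~\ref{height bound and radius, with denominators} on $\eta_1\eta'_{\rho_i}$, but that lemma is stated for $\eta\in\tbL(\mb{Z})^+$, and your $\eta'_{\rho_i}$ is only in $\tbL(\mb{R})^+$ (so $\hgt(\eta_1\eta'_{\rho_i})$ is not even defined as written). The fix is routine: the source \cite[Theorem 4.2]{BT} underlying that lemma bounds the matrix norm of any real $g$ with $g\Phi_{\tbL}\cap B_{p_{\tbL}(h)}(T)\neq\varnothing$, after which the bounded factor $\eta'_{\rho_i}$ can indeed be absorbed into the constant; alternatively, replace $\eta'_{\rho_i}\Phi_{\tbL}$ by a finite union of $\tbL(\mb{Z})^+$-translates of $\Phi_{\tbL}$ before applying the lemma.
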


\begin{proof}
There exist   $\sigma_1,\sigma_2\in \tbN_r(\mb{Z})^+$ and $\eta_{1}, \eta_{2}\in \tbL(\mb{Z})^+$  such that
$$Z(T)\cap  \gamma_{1} \sigma_1\eta_{1}\Phi_\mb{R}\cap  \gamma_{2} \sigma_2\eta_{2}\Phi_\mb{R}\neq \varnothing.$$
Then
$$\Phi_\mb{R}\cap   ((  \gamma_{1} \sigma_1\eta_{1})^{-1}  \gamma_{2} \sigma_2\eta_{2})\Phi_\mb{R}\neq \varnothing,$$ 
 so 
 $\gamma_{2}\sigma_2\eta_{2}=  \gamma_{1}\sigma_1\eta_{1}\delta$ for some $\delta\in S$. 
 We then have 
$$[\gamma_{2}][\eta_{2}]=[\gamma_{1}][\eta_{1}][\delta]
=[\gamma_{1}][\eta_{1}][\delta]^{\tbW}[\eta_{1}]^{-1}[\eta_{1}][\delta]^{\tbH}.$$
Since $\tb{P}/\tbN= \tbW\rtimes\tbH$, we have $[\gamma_{2}]=[\gamma_{1}][\eta_{1}][\delta]^{\tbW}[\eta_{1}]^{-1}$.

Let $z\in Z(T)\cap  \gamma_{1}\sigma_1\eta_{1}\Phi_\mb{R}$. Write $z=\gamma_1\sigma_1\eta_1p{h}$ for some $p\in \tbP(\mb{R})^+$. Write $p=ug_1g_2$ for some $u\in \tbU(\mb{R})$, $g_1\in \tbN_r(\mb{R})^+$ and $g_2\in \tbL(\mb{R})^+$. 
Since the unipotent radical acts trivially on the associated graded, $g_2{h}_{\text{Gr}}\in \Phi_{\tbL}$.
Thus $p_{\tbL}(z)=\eta_1g_2{h}_{\text{Gr}}\in  \eta_1\Phi_{\tbL}$ (recall that $\tbN_r\times \tbL\ra \tbG$ in Section \ref{constructing definable fundamental set} is an isogeny, and hence a homomorphism),
so $\eta_1\Phi_{\tbL}\cap p_{\tbL}(Z(T))\neq \varnothing$. By Lemma \ref{height bound and radius, with denominators}, $\hgt  \eta_1\leq e^{c_1T}$. Hence by Lemma \ref{comparing heights, algebraic}, $\hgt[\eta_1]=\hgt \tau(\eta_1)\leq k_1(\hgt \eta_1)^{k_2}\leq e^{2k_2c_1T}$ when $T\gg 0$.
\end{proof}

\begin{lemma}\label{product of unipotent matrix, clearing denominators}
Let $\mc{Y}$ be a subset of the group $\mb{U}_m(\mb{Q})$ of upper unitriangular (upper triangular with 1's on the diagonal) $m\times m$ rational matrices  such that the heights of matrices in $\mc{Y}$ are bounded by a constant.  Let 
$s_1:=\min\{ d\in \mb{Z}^+: dY\in \Mat_m(\mb{Z}) \text{ for all } Y\in \mc{Y}\}$.  For any positive integer $t$, and any $Y_1,\dots, Y_t\in \mc{Y}$, we have  $s_1^{m-1}Y_1\cdots Y_t\in \Mat_m(\mb{Z})$.
\end{lemma}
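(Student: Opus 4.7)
The plan is to exploit the fact that although the number of factors $t$ is unbounded, the structural constraint of upper unitriangularity forces the denominators of the product to be controlled by the fixed dimension $m$ rather than by $t$. First I would observe that $\mb{U}_m(\mb{Q})$ is closed under multiplication, so the product $P := Y_1 \cdots Y_t$ is again upper unitriangular; the diagonal entries $P_{ii} = 1$ and sub-diagonal entries $P_{ij} = 0$ (for $i > j$) are trivially integral, so the entire content of the lemma is to control the denominators of the strictly super-diagonal entries $P_{ij}$ with $i < j$.

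Next, I would write out the standard expansion of an iterated matrix product,
\[
P_{ij} \;=\; \sum_{i = k_0 \leq k_1 \leq \cdots \leq k_t = j} \; \prod_{r=1}^{t} (Y_r)_{k_{r-1}\, k_r},
\]
where the sum runs over non-decreasing sequences of indices from $i$ to $j$ of length $t+1$. The crucial observation is that whenever $k_{r-1} = k_r$, the factor $(Y_r)_{k_{r-1}\, k_r}$ equals $1$ because each $Y_r$ is unitriangular, so only the strict jumps $k_{r-1} < k_r$ can contribute non-integer factors, and each such factor is an entry of some $Y_r$ and hence has denominator dividing $s_1$ by the definition of $s_1$.

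Since each term in the sum is indexed by a non-decreasing sequence beginning at $i$ and ending at $j$, the total number of strict jumps in that sequence is exactly $j - i$, which is bounded by $m - 1$. Therefore every term in the expansion has denominator dividing $s_1^{j-i} \leq s_1^{m-1}$, a bound \emph{independent of} $t$, and so $P_{ij} \in s_1^{-(m-1)}\mb{Z}$. Multiplying through by $s_1^{m-1}$ clears all denominators, yielding the asserted integrality of $s_1^{m-1} Y_1 \cdots Y_t$.

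I do not expect a genuine obstacle: the argument is purely combinatorial once the multiplication formula is written down, and the point — already anticipated by the statement — is simply that in a product of unitriangular matrices, the non-trivial factors contributing to any fixed entry $(i,j)$ are constrained in number by the geometric quantity $j - i$, which is bounded by the matrix size, rather than by the word length $t$.
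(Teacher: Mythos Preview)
Your argument is correct and takes a different route from the paper. One small slip: the number of strict jumps in a non-decreasing sequence from $i$ to $j$ is \emph{at most} $j-i$, not \emph{exactly} $j-i$ (e.g.\ the sequence $1,3,3$ has one strict jump but $j-i=2$); this is harmless, since fewer jumps only means fewer non-integer factors, and the denominator of each term still divides $s_1^{j-i}$, hence $s_1^{m-1}$.

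The paper instead argues by induction on $m$: deleting either the first or the last row and column reduces every entry $(i,j)\neq(1,m)$ to the $(m-1)$-dimensional case, giving $s_1^{m-2}(Y_1\cdots Y_t)_{ij}\in\mathbb{Z}$ there; the corner entry $(1,m)$ is then handled by the one-step expansion
\[
(Y_1\cdots Y_t)_{1,m}=\sum_{k=1}^t (Y_k)_{1,m}+\sum_{k=2}^t\sum_{i=2}^{m-1}(Y_1\cdots Y_{k-1})_{1,i}\,(Y_k)_{i,m},
\]
after which $s_1^{m-1}$ visibly clears all denominators via the inductive hypothesis. Your direct expansion over monotone index paths bypasses the induction entirely and makes the mechanism transparent: the denominator of the $(i,j)$ entry is governed by the path length $j-i$, not by the word length $t$. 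The paper's inductive argument is essentially the same observation unrolled one step at a time; your version is more self-contained, while the paper's recursive formula has the side benefit of being reused verbatim in the height estimate of the next lemma.
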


\begin{proof}
We prove by induction on $m$. By induction assumption,  $s_1^{m-2}\cdot  (Y_1\cdots Y_t)_{ij}\in \mb{Z}$ for any $(i,j)\neq (1,m)$. Moreover, 
$$ s_1^{m-1}\cdot (Y_1\cdots Y_t)_{1,m}=\sum_{i=1}^t s_1^{m-1}\cdot  (Y_i)_{1,m}+\sum_{k=2}^t\sum_{i=2}^{m-1} s_1^{m-2}\cdot (Y_1\cdots Y_{k-1})_{1,i} \cdot s_1\cdot (Y_k)_{i,m} $$
is an integer.
\end{proof}

\begin{lemma}\label{unipotent matrix height bound}
Let $\mc{S}$ be a finite subset of the group $\mb{U}_m(\mb{Q})$ of upper unitriangular $m\times m$ rational matrices  such that the heights of matrices in $\mc{S}$ are bounded by a constant $s_0$. Let $f$ be a positive integer.  There exists a constant $C>0$, depending only on $\mc{S}, m, f$ and $s_0$, such that the following holds: 

Let $l_0>0$. Let $\mc{L}\subset \tbGL_{m}(\mb{Q})$ such that $fB\in \Mat_m(\mb{Z})$ and $\hgt B\leq l_0$ for all $B\in \mc{L}$. Let $A_1,\dots, A_r\in \mc{S}$ and $B_1,\dots, B_r\in \mc{L}$ satisfying $B_jA_jB_j^{-1}\in \mb{U}_m(\mb{Q})$ for all $j$.   Then 
$$\hgt\left(\prod_{j=1}^r B_jA_jB_j^{-1}\right) \leq C \cdot l_0^{2^mm} \cdot r^{m-1}.$$
\end{lemma}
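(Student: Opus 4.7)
The plan is a two-step bound: first show each conjugate $C_j := B_j A_j B_j^{-1}$ has height polynomially bounded in $l_0$, and then use the nilpotence of $\mb{U}_m$ to bound the height of the product of $r$ such unitriangular matrices by $r^{m-1}$ times a power of $l_0$.

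For the first step, I would start from Lemma \ref{height of inverse}: since $fB_j \in \tbGL_m(\mb{Z})$ has determinant $\pm 1$, we obtain $\hgt(fB_j)^{-1} \leq (m-1)!(fl_0)^{m-1}$, and hence $B_j^{-1} = f(fB_j)^{-1}$ has integer entries bounded by $(m-1)!\,f^m l_0^{m-1}$. A convenient simplification is that the scalar $f$ cancels in the outer conjugation: writing $\tilde{B}_j := fB_j$, one has $C_j = \tilde{B}_j A_j \tilde{B}_j^{-1}$. Let $d_0$ denote the LCM of the denominators of entries of matrices in $\mc{S}$, a constant depending only on $\mc{S}$. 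Then $d_0(C_j - I) = \tilde{B}_j\,(d_0(A_j-I))\,\tilde{B}_j^{-1}$ is an integer matrix, so the denominators of the entries of $C_j$ divide $d_0$, and a direct estimate yields $|(C_j)_{ij}| \leq m^2(m-1)!\, s_0\, (fl_0)^m$. Moreover, a short determinantal argument (comparing $\det(fB_j) = \pm 1$ with the determinant of the canonical integer multiple of $B_j$) forces $f$ to equal the LCM of denominators of entries of any $B \in \mc{L}$, hence $f \leq l_0^{m^2}$. Altogether, $\hgt C_j \leq C_1 l_0^{a}$ for some exponent $a$ depending on $m$ and constant $C_1$ depending on $m$, $s_0$, and $\mc{S}$.

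For the second step, write $C_j = I + M_j$ with $M_j$ strictly upper triangular, so that $M_j^m = 0$, and expand
\[
\prod_{j=1}^r C_j = I + \sum_{k=1}^{m-1}\sum_{1 \leq j_1 < \cdots < j_k \leq r} M_{j_1}\cdots M_{j_k},
\]
where higher-order terms vanish by nilpotence and there are at most $\binom{r}{k} \leq r^{m-1}$ summands of length $k$. Since $d_0 M_j$ is integer with entries of absolute value at most $d_0\,\hgt C_j$, each product $M_{j_1}\cdots M_{j_k}$ lies in $d_0^{-k}\mb{Z}^{m\times m}$ with integer numerator bounded by $m^{k-1}(d_0 C_1 l_0^{a})^k$. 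Putting the whole expansion over the common denominator $d_0^{m-1}$ and summing, one obtains $\hgt \prod_{j=1}^r C_j \leq C_2 \, r^{m-1} l_0^{a(m-1)}$, which is then verified to be at most $C \cdot l_0^{2^m m} \cdot r^{m-1}$ for a suitable constant $C = C(\mc{S}, m, s_0)$.

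The main obstacle is obtaining a sufficiently small exponent $a$ on $l_0$ so that $a(m-1) \leq 2^m m$ (the stated exponent is evidently loose, which is the only constraint on $a$). If the naive estimate above is too generous for this inequality, a refinement exploits the unitriangular constraint $C_j B_j = B_j A_j$, which is a triangular linear system in the unknowns $\{(C_j)_{ik}\}_{i<k}$ solvable recursively via Cramer's rule on minors of $B_j$, typically providing tighter control on the individual entries of $C_j$.
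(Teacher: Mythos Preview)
Your approach is correct and takes a genuinely different route. The paper argues by induction on $m$: after bounding $\hgt Y_j=O(l_0^m)$ just as you do, it expands the $(1,m)$-entry of $\prod_j Y_j$ via the identity $(\prod_j Y_j)_{1,m}=\sum_i(Y_i)_{1,m}+\sum_{k}\sum_i(\prod_{j<k}Y_j)_{1,i}(Y_k)_{i,m}$, applies the induction hypothesis to the $(m-1)\times(m-1)$ blocks, and uses Lemma~\ref{product of unipotent matrix, clearing denominators} to control denominators. Your nilpotent expansion $\prod_j(I+M_j)=\sum_{k<m}\sum_{j_1<\cdots<j_k}M_{j_1}\cdots M_{j_k}$ avoids the induction entirely and is more transparent; with $a=m$ it even gives the sharper $l_0$-exponent $m(m-1)$ in place of $2^m m$. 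Your worry about the size of $a$ is misplaced: the paper's $O(l_0^m)$ silently absorbs $f$ into the implied constant (harmless, since $f$ is fixed in the application, Theorem~\ref{lower bound of number of vertices in paths}), and if you do the same then $a=m$ and $a(m-1)\le 2^m m$ trivially. Your determinantal observation that $f$ must equal the lcm of denominators, hence $f\le l_0^{m^2}$, is correct and more careful than what the paper writes; but if you use it to eliminate $f$ from the constant you obtain the exponent $m(m^2+1)(m-1)$, which exceeds $2^m m$ for small $m$---so the fix there would be to relax the stated exponent, not to refine the entrywise bound on $C_j$.
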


\begin{proof}
We prove by induction on $m$. The case when $m=1$ is trivial because $B_jA_jB_j^{-1}=(1)$ for all $j$.  Let $\mc{S}^\prime$ (resp. $\mc{S}^{\prime\prime}$) be the set of all $(m-1)\times (m-1)$ upper unitriangular matrices  obtained by deleting the last (resp. first) row and the last (resp. first) column of matrices in $\mc{S}$.  By induction assumption, we have a constant $C^\prime>0$ (resp. $C^{\prime\prime}>0$) such that the lemma holds with $m$ and  $\mc{S}$ being replaced by $m-1$ and  $\mc{S}^\prime$ (resp. $\mc{S}^{\prime\prime}$). Let $\mc{L}^\prime$ (resp. $\mc{L}^{\prime\prime}$) be the set of all $(m-1)\times (m-1)$ matrices obtained by deleting the last (resp. first) row and the last (resp. first) column of matrices in $\mc{L}$. Write $Y_j:=B_jA_jB_j^{-1}\in \mb{U}_m(\mb{Q})$. 
Define $s_1:=\min\{ s\in \mb{Z}^+: sA\in \Mat_m(\mb{Z}) \text{ for all } A\in \mc{S}\}$. 
By Lemma \ref{height of inverse}, for all $j$, 
\begin{align*}
\hgt Y_j\leq  s_1\hgt (fB_j)(s_1A_j)(fB_j)^{-1}&\leq s_1m^2(m-1)!\hgt(s_1A_j)(\hgt(fB_j))^m
=O(l_0^m).
\end{align*}
We know $s_1Y_j\in \Mat_m(\mb{Z})$ for all $j=1,\dots r$. 
By Lemma \ref{product of unipotent matrix, clearing denominators} with $\mc{Y}$ being the set of matrices obtained by deleting the last row and column of some $Y_j$, by the formula in the proof of Lemma \ref{product of unipotent matrix, clearing denominators}, and by the induction hypothesis, we have
\begin{align*}
\hgt\left(\prod_{j=1}^rY_j\right)_{1,m}
&\leq s_1^{m-1}\hgt\left( \sum_{i=1}^r s_1^{m-1}(Y_i)_{1,m}+\sum_{k=2}^r\sum_{i=2}^{m-1}s_1^{m-2} \left(\prod_{j=1}^{k-1}Y_j\right)_{1,i}s_1(Y_k)_{i,m}\right)
\\&\leq s_1^{m-1}\left(\sum_{i=1}^r |s_1^{m-1}(Y_i)_{1,m}|+\sum_{k=2}^r\sum_{i=2}^{m-1}\left|s_1^{m-2} \left(\prod_{j=1}^{k-1}Y_j\right)_{1,i}\right||s_1(Y_k)_{i,m}|\right)
\\&\leq O(l_0^m\cdot r)+ \sum_{k=2}^r\sum_{i=2}^{m-1} O(l_0^{2^{m-1}(m-1)}(k-1)^{m-2}\cdot l_0^m)
\\&\leq O(l_0^m\cdot r)+ \sum_{k=2}^r O(l_0^{2^mm}(r-1)^{m-2})
\\&\leq O(l_0^{2^mm} \cdot r^{m-1})
\end{align*}
By induction assumption, the heights of the other entries are less than $\max\{C^\prime, C^{\prime\prime}\}\cdot l_0^{2^{m-1}(m-1)}\cdot r^{m-2}$.
The lemma follows.
\end{proof}

\begin{thm}\label{lower bound of number of vertices in paths}
There are constants $c_7,c_8>0$ such that when $T\gg 0$, the graph $Q_{\log T^{1/\lambda}}$ has at least $c_7T^{c_8}$ vertices of heights at most $T$.
\end{thm}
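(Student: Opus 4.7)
The plan is to exploit the connectedness of $Q_{T'}$ (Lemma \ref{graph connected}) with $T' := (\log T)/\lambda$, together with the case (2) hypothesis, to produce a long shortest path in $Q_{T'}$ issuing from the identity, then to bound how quickly heights can grow along this path via Lemma \ref{unipotent matrix height bound}. The choice $\lambda > 2k_2 c_1 2^m m$ made in Section \ref{Heights} will be precisely what forces the resulting growth exponent to be strictly less than one.

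First I would check that the identity coset $[1]$ lies in $V(Q_{T'})$: by definition $h \in Z(T')$, and $h \in \Phi_{\mb{R}}$ by the choice of $h$ in Section \ref{Heights}. Next, the case (2) hypothesis gives $\hgt_{\tbW} Z(T') > e^{\lambda T'} = T$ for $T \gg 0$, so there exists a vertex $[\gamma_0] \in V(Q_{T'})$ with $\hgt[\gamma_0] > T$. Since $Q_{T'}$ is connected, I would fix a shortest path $[1] = [\gamma^{(0)}], [\gamma^{(1)}], \ldots, [\gamma^{(r_0)}] = [\gamma_0]$; by minimality these $r_0 + 1$ vertices are pairwise distinct.

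By Lemma \ref{adjacent vertices and heights}, each edge contributes a factor $[\eta_j][\delta_j]^{\tbW}[\eta_j]^{-1}$ with $\delta_j$ in the finite set $S$ and $[\eta_j] \in \tau(\tbL(\mb{Z})^+)$ satisfying $\hgt[\eta_j] \leq e^{2k_2 c_1 T'} = T^{2k_2 c_1/\lambda}$, so telescoping yields
$$[\gamma^{(k)}] = \prod_{j=1}^{k} [\eta_j]\,[\delta_j]^{\tbW}\,[\eta_j]^{-1},$$
each factor landing in $\mb{U}_m(\mb{Q})$ since $\tbW$ is normal in $\tbP/\tbN$ and embedded in $\mb{U}_m$ via $\ol{\phi}$. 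I would then apply Lemma \ref{unipotent matrix height bound} with $\mc{S} := \{[\delta]^{\tbW} : \delta \in S\}$, $l_0 := T^{2k_2 c_1/\lambda}$, and $f$ a fixed positive integer clearing the denominators of the arithmetic subgroup $\tau(\tbL(\mb{Z})^+) \subset \tbH(\mb{Q}) \subset \tbGL_m(\mb{Q})$, giving
$$\hgt[\gamma^{(k)}] \leq C \cdot l_0^{2^m m} \cdot k^{m-1} = C \cdot T^{\mu} \cdot k^{m-1}, \qquad \mu := \frac{2 k_2 c_1 2^m m}{\lambda} < 1.$$

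Setting $K := \lfloor (T^{1-\mu}/C)^{1/(m-1)} \rfloor$, the bound gives $\hgt[\gamma^{(k)}] \leq T$ for every $k \leq K$, while specializing to $k = r_0$ and using $\hgt[\gamma^{(r_0)}] > T$ forces $r_0 \geq K$. Hence the first $K + 1$ vertices of the shortest path are distinct vertices of $Q_{T'}$ of height at most $T$, yielding at least $c_7 T^{c_8}$ such vertices with $c_8 := (1-\mu)/(m-1) > 0$. The main subtlety I expect to work through carefully is the uniform denominator bound for the $[\eta_j]$, i.e.\ producing a single $f$ independent of $T$ such that $f[\eta_j] \in \tbGL_m(\mb{Z})$, which follows from the fact that $\tau(\tbL(\mb{Z})^+)$ is an arithmetic subgroup of the $\mb{Q}$-group $\tbH$.
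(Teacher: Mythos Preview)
Your proposal is correct and follows essentially the same approach as the paper: both exploit connectedness of $Q_{T'}$ to produce a path from $[id]$ to a vertex of large height, express each step via Lemma~\ref{adjacent vertices and heights}, and then invoke Lemma~\ref{unipotent matrix height bound} to force the path to be long. The only cosmetic difference is that you take a shortest path and bound $\hgt[\gamma^{(k)}]$ for every $k$, whereas the paper truncates an arbitrary (simple) path at the first vertex exceeding the threshold and applies the product bound only at the endpoint; your justification of the uniform denominator $f$ via arithmeticity of $\tau(\tbL(\mb{Z})^+)$ is equivalent to the paper's observation that the quotient $\tbP(\mb{Q})\to(\tbP/\tbN)(\mb{Q})$ is given by rational polynomials with bounded denominators.
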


\begin{proof}
The assumption in case (2) says $\hgt_{\tbW} Z(T)> e^{\lambda T}$ for all $T\gg 0$.  Fix such large $T$, such that $e^{\lambda T}>1=\hgt[id]$. Since $h\in \Phi_{\mb{R}}$, the identity $[id]$ is in $V(Q_T)$ for all $T> 0$.  Then by Lemma \ref{graph connected}, there exist $[\gamma]\in V(Q_T)$ for which $\hgt[\gamma] >e^{\lambda T}$ and a path in the graph $Q_T$ joining $[id]$ and $[\gamma]=:[\gamma_r]$ with intermediate vertices $[\gamma_{1}],\dots, [\gamma_{r-1}]$ of heights less than $e^{\lambda T}$, where $[\gamma_{i}]$ and $[\gamma_{i+1}]$ are adjacent. 
By Lemma \ref{adjacent vertices and heights},
$[\gamma]=\prod_{j=0}^{r-1} [\eta_j][\delta_j]^{\tbW}[\eta_j]^{-1}$
 for some  $\delta_j\in S$ and some $\eta_j\in \tbL(\mb{Z})^+$ such that $\hgt [\eta_j]\leq e^{2k_2c_1T}$. 
The quotient map $\tbP(\mb{Q})\ra (\tbP/\tbN)(\mb{Q})$ is defined by finitely many rational polynomials. 
Let $f\in \mb{Z}^+$ such that $f[\eta]\in (\tbP/\tbN)(\mb{Z})$ for all   $\eta\in \tbL(\mb{Z})^+$.
Let $\mc{S}=\{[\delta]^{\tbW}: \delta\in S\}$, $\mc{L}=\{[\eta_0],\dots , [\eta_{r-1}]\}$, and $l_0= e^{2k_2c_1T}$. By Lemma \ref{unipotent matrix height bound}  with such $\mc{S}, f, \mc{L}$ and $l_0$,  we have
\begin{align*}
e^{\lambda T}<\hgt[\gamma]=
\hgt\prod_{j=0}^{r-1} [\eta_j][\delta_j]^{\tbW}[\eta_j]^{-1}
=O(e^{2k_2c_1T2^mm} r^{m-1}).
\end{align*}
Hence, there are constants $c_5,c_6>0$ ($c_6>0$ because $\lambda>2k_2c_12^mm$ by assumption in Section \ref{Heights}) independent of $T$ such that  $Q_T$ has at least $c_5e^{c_6 T}$ vertices of heights at most $e^{\lambda T}$. The theorem follows.
 \end{proof}

\begin{thm}
Theorem \ref{Ax-Schanuel for variations of mixed Hodge structures} holds in case (2).
\end{thm}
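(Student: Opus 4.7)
The plan is to feed the polynomial lower bound supplied by Theorem~\ref{lower bound of number of vertices in paths} into the Pila--Wilkie counting theorem applied to the definable set $I_{\tbW}$, and then invoke Lemma~\ref{inifinite graph, AS}. Concretely, for large $T$, set $T' := (\log T)/\lambda$; Theorem~\ref{lower bound of number of vertices in paths} then produces at least $c_7 T^{c_8}$ vertices $[\gamma]\in V(Q_{T'})$, each with $\gamma\in\tbU(\mb{Z})$ and $\hgt[\gamma]\leq T$.

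Next I would promote each such vertex to a rational point of $I_{\tbW}$. By definition of $V(Q_{T'})$, for every such $[\gamma]$ there exist $\sigma\in\tbN_r(\mb{Z})^+$ and $\eta\in\tbL(\mb{Z})^+$ with $Z(T')\cap \gamma\sigma\eta\Phi_{\mb{R}}\neq\varnothing$. Since $Z=r(p_D(U))$ and $r$ is $\tbP(\mb{Z})^+$-equivariant, this translates to $U\cap(X\times \gamma\sigma\eta\Phi)\neq\varnothing$. Lemma~\ref{intersecting fundamental domain, definable quotient}, applied to the element $\gamma\sigma\eta\in\tbP(\mb{Z})^+$ (with $\sigma'=\mathrm{id}$ there), then places $\gamma\sigma\eta\in I$, so $[\gamma\sigma\eta]\in\ol{I}$. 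Because $\sigma\in\tbN_r\subset\tbN$, its class is trivial in $\tbP/\tbN$, while $[\eta]$ lies in the Levi factor $\tbH$; hence under the definable projection $\pi:(\tbP/\tbN)(\mb{C})\simeq \tbW(\mb{C})\rtimes \tbH(\mb{C})\to \tbW(\mb{C})$ the image $\pi([\gamma\sigma\eta])$ equals $[\gamma]$, and therefore $[\gamma]\in I_{\tbW}$.

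Having now exhibited at least $c_7 T^{c_8}$ rational points of height at most $T$ of the form $[\gamma]$ with $\gamma\in\tbU(\mb{Z})$ inside the definable set $I_{\tbW}$ for every sufficiently large $T$, the Pila--Wilkie counting theorem (applied with any $\epsilon\in(0,c_8)$) forces the algebraic part of $I_{\tbW}$ to contain all but $O(T^\epsilon)$ of these points. A standard block decomposition argument then yields a single connected positive-dimensional semialgebraic subset of $I_{\tbW}$, and in particular a semialgebraic curve, containing arbitrarily many rational points of the form $[\gamma]$ with $\gamma\in\tbU(\mb{Z})$. Lemma~\ref{inifinite graph, AS} then delivers Theorem~\ref{Ax-Schanuel for variations of mixed Hodge structures}.

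The step I expect to require the most care is the bookkeeping in the second paragraph: one must ensure that the heights of the vertices of $Q_{T'}$ indeed match (up to harmless constants folded into $c_7,c_8$) the heights of the corresponding rational points in $I_{\tbW}$, so that the polynomial count from Theorem~\ref{lower bound of number of vertices in paths} is directly usable as the hypothesis of Pila--Wilkie. All other pieces—the definability of $I_{\tbW}$, the factorization $[\gamma\sigma\eta]\mapsto[\gamma]$ under $\pi$, and the definable Chow type reductions embedded in Lemma~\ref{inifinite graph, AS}—have already been laid down earlier in the paper, so the case-(2) argument essentially amounts to assembling these ingredients.
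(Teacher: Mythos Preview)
Your proposal is correct and follows essentially the same approach as the paper: invoke Theorem~\ref{lower bound of number of vertices in paths} to produce polynomially many vertices $[\gamma]$ with $\gamma\in\tbU(\mb{Z})$, use the $\tbP(\mb{Z})^+$-equivariance of $r$ together with Lemma~\ref{intersecting fundamental domain, definable quotient} to land them in $I_{\tbW}$, apply Pila--Wilkie, and conclude via Lemma~\ref{inifinite graph, AS}. Your unpacking of why $\pi([\gamma\sigma\eta])=[\gamma]$ is exactly the content the paper leaves implicit, and the worry in your last paragraph is unfounded: the height of $[\gamma]$ as a vertex of $Q_{T'}$ and as a point of $I_{\tbW}$ are literally the same quantity $\hgt\ol{\phi}([\gamma])$, so no bookkeeping is needed there.
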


\begin{proof}
By Theorem \ref{lower bound of number of vertices in paths}, when $T\gg 0$,  there are at least $c_7T^{c_8}$ points $[\gamma]$ in $\tbW(\mb{Q})$ of heights at most $T$ such that  for some $\gamma^\prime \in [\gamma]\cap \tbU(\mb{Z})$, 
$$Z\cap  \gamma^\prime\bigcup_{\substack{ \eta\in \tb{L}(\mb{Z})^+,\sigma \in \tb{N}_r(\mb{Z})^+}}  \sigma\eta \Phi_{\mb{R}}\neq \varnothing.$$
Since $Z=r(p_D(U))$ and $r$ is $\tbP(\mb{Z})^+$-equivariant, this condition implies that
$$U\cap (X\times \gamma^\prime \bigcup_{\substack{  \eta\in \tb{L}(\mb{Z})^+,\sigma \in \tb{N}_r(\mb{Z})^+}}   \sigma\eta  \Phi)\neq \varnothing.$$
By Lemma \ref{intersecting fundamental domain, definable quotient}, when $T\gg 0$,  $I_{\tbW}$ contains at least $c_7T^{c_8}$ points $[\gamma]$ in $\tbW(\mb{Q})$ of heights at most $T$ such that $\gamma\in \tbU(\ZZ)$.
By the counting theorem of Pila-Wilkie \cite{PW},  for any positive integer $p$, the set $I_{\tbW}$ contains a semialgebraic curve $C_{\mb{R}}$ containing at least $p$ points of the form $[\gamma]$, where $\gamma\in \tbU(\ZZ)$.
By Lemma \ref{inifinite graph, AS}, the theorem follows.
\end{proof}

\section{Proof of case (3)}\label{Proof of case (3)}
Since $\hgt_{\tbW} Z$ is finite, the set
 $$\mc{T}:=\{[\gamma]: \gamma\in\tbU(\mb{Z}), Z\cap  \gamma \bigcup_{\substack{\sigma\in \tbN_r(\mb{Z})^+}}   \sigma \Phi_{\mb{R}}\neq \varnothing \}$$
 is finite. Write $\mc{T}=\{[\gamma_1],\dots, [\gamma_n]\}$, where $\gamma_i$ satisfy the conditions in $\mc{T}$.
Also, $Z$ is contained in a fiber of $p_{\tbL}$. We have $p_{\tbL}(Z)\in \Phi_{\tbL}$. 
Recall that we let $\tbN(\ZZ)^+:=\tbN(\ZZ)\cap \tbP(\RR)^+$.
\begin{lemma}\label{graph connected, case 3}
We have 
$$Z\subset \bigcup_{i=1}^n \bigcup_{\substack{\sigma\in \tbN(\mb{Z})^+}} \gamma_i\sigma \Phi_{\mb{R}}.$$
\end{lemma}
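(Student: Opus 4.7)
The plan is to take an arbitrary $z\in Z$ and construct directly a decomposition $z\in \gamma\sigma \Phi_{\mb{R}}$ with $\gamma\in\tbU(\mb{Z})$ and $\sigma\in\tbN_r(\mb{Z})^+$, bypassing any nontrivial $\tbL(\mb{Z})^+$-factor by exploiting the standing hypothesis $p_{\tbL}(Z)\in\Phi_{\tbL}$. The conclusion then follows from the definition of $\mc{T}$ and elementary group-theoretic rearrangement.

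Concretely, write $j(z)=(z_{\tbN_r},z_{\tbL},z_{\tbU})\in D_{\tbN_r}\times D_{\tbL}\times D_{\tbU,\mb{R}}$, where $z_{\tbL}\in\Phi_{\tbL}$ by hypothesis. Using that $\Phi_{\tbN_r}$ is a fundamental set for $\tbN_r(\mb{Z})^+$ on $D_{\tbN_r}$, pick $\sigma\in\tbN_r(\mb{Z})^+$ with $\sigma^{-1}z_{\tbN_r}\in\Phi_{\tbN_r}$. Since $\tbN_r$ and $\tbL$ commute modulo the finite kernel of the isogeny $\tbN_r\times\tbL\to\tbG$, the $\tbL$-projection is $\tbN_r$-invariant, so
\[
j(\sigma^{-1}z) = (\sigma^{-1}z_{\tbN_r},\, z_{\tbL},\, \sigma^{-1}z_{\tbU})\in \Phi_{\tbN_r}\times\Phi_{\tbL}\times D_{\tbU,\mb{R}}.
\]
Next, using that $\Phi_{\tbU}$ is a fundamental set for $\tbU(\mb{Z})$ on $D_{\tbU,\mb{R}}$, pick $\gamma_0\in\tbU(\mb{Z})$ with $\gamma_0^{-1}\sigma^{-1}z_{\tbU}\in\Phi_{\tbU}$. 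Since $\tbU$ acts trivially on the associated graded $D_{\Gr}$, the first two coordinates are preserved, so $\gamma_0^{-1}\sigma^{-1}z\in j^{-1}(\Phi_{\tbN_r}\times\Phi_{\tbL}\times\Phi_{\tbU})\subset\Phi_{\mb{R}}$, i.e., $z\in\sigma\gamma_0\Phi_{\mb{R}}$. Setting $\gamma:=\sigma\gamma_0\sigma^{-1}\in\tbU(\mb{Z})$ (as $\tbN_r$ normalizes $\tbU$ and $\mb{Z}$-points of affine algebraic group schemes over $\mb{Z}$ are closed under inversion), we rewrite this as $z\in\gamma\sigma\Phi_{\mb{R}}$.

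By definition of $\mc{T}$, we then have $[\gamma]\in\mc{T}$, so $[\gamma]=[\gamma_i]$ for some $i\in\{1,\ldots,n\}$. The element $\nu:=\gamma_i^{-1}\gamma$ lies in $\tbN(\mb{C})$ (from $[\gamma]=[\gamma_i]$), in $\tbP(\mb{Z})$ (since $\gamma,\gamma_i\in\tbU(\mb{Z})$), and in $\tbP(\mb{R})^+$ (since $\gamma,\gamma_i\in\tbU\subset\tbP(\mb{R})^+$), so $\nu\in\tbN(\mb{Z})^+$. Setting $\tau:=\nu\sigma$, the product $\tau$ lies in $\tbN(\mb{Z})^+$ (as $\sigma\in\tbN_r(\mb{Z})^+\subset\tbN(\mb{Z})^+$ and $\tbN(\mb{Z})^+$ is closed under multiplication), and $z\in\gamma_i\tau\Phi_{\mb{R}}$, which is the desired containment. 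The main step is the explicit construction in the second paragraph; the crucial input is $z_{\tbL}\in\Phi_{\tbL}$, which is what allows us to take $\eta=\mathrm{id}$ and avoid the $\tbL(\mb{Z})^+$-direction entirely.
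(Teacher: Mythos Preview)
Your proof is correct and follows essentially the same approach as the paper. Both arguments decompose $z$ via $j$, use the hypothesis $z_{\tbL}\in\Phi_{\tbL}$ to avoid any $\tbL(\mb{Z})^+$-translate, and produce $\sigma\in\tbN_r(\mb{Z})^+$, $\gamma\in\tbU(\mb{Z})$ with $z\in\gamma\sigma\Phi_{\mb{R}}$; you then spell out in detail the passage from $[\gamma]\in\mc{T}$ to the claimed inclusion, which the paper leaves to the reader. The only cosmetic difference is that the paper finds $\gamma$ directly with $z_{\tbU}\in\gamma\sigma\Phi_{\tbU}$, whereas you first translate by $\sigma^{-1}$, pick $\gamma_0$, and then conjugate back to $\gamma=\sigma\gamma_0\sigma^{-1}$; your justification that $\gamma\in\tbU(\mb{Z})$ is slightly loose (it is not really about group schemes over $\mb{Z}$) but the conclusion is correct since $\sigma,\sigma^{-1},\gamma_0\in\tbGL_\ell(\mb{Z})$ and $\sigma\gamma_0\sigma^{-1}\in\tbU(\mb{Q})$.
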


\begin{proof}
Suppose $z\in Z$. Let $(z_{\tbN_r}, z_{\tbL}, z_{\tbU})$ be the image of $z$ under the isomorphism 
$$r(D)\simeq D_{\tb{N}_r}\times D_{\tbL}\times D_{\tb{U},\mb{R}}.$$
 There exists $ \sigma \in \tbN_r(\mb{Z})^+$ such that $z_{\tbN_r}\in \sigma\Phi_{\tbN_r}$. 
 Recall that $\tbP(\mb{R})^+$ acts on $D_{\tbU,\mb{R}}$ \emph{a priori}.  There exists   $\gamma\in \tbU(\mb{Z})$ such that  $z_{\tbU}\in \gamma\sigma\Phi_{\tbU}$.
For some  $g_1\in \tbN_r(\mb{R})^+$, $g_2\in \tbL(\mb{R})^+$ and $u\in \tbU(\mb{R})$, we have $g_1{h}_{\text{Gr}}\in \Phi_{\tbN_r}$, $g_2{h}_{\text{Gr}}\in \Phi_{\tbL}$, $u{h}_s\in \Phi_{\tbU}$, and 
$$ (z_{\tbN_r}, z_{\tbL},z_{\tb{U}})=(\sigma g_1{h}_{\text{Gr}}, g_2{h}_{\text{Gr}}, \gamma\sigma u{h}_s).$$ Let $g:=g_1g_2$.
There exists $u^\prime\in\tbU(\mb{R})$ such that $g^{-1}{h}_s=u^\prime {h}_s$.
Then since $\tbU(\mb{R})$ acts trivially on the associated graded, we have 
$$ugu^\prime h =j^{-1}((g_1{h}_{\text{Gr}}, g_2{h}_{\text{Gr}}, ugu^\prime {h}_s))\in \Phi_{\mb{R}},$$
and thus 
$$z=j^{-1}((z_{\tbN_r}, z_{\tbL},z_{\tb{U}}))=\gamma\sigma ugu^\prime h\in \gamma\sigma \Phi_{\mb{R}}.$$
The inclusion follows from the definition of $\mc{T}$.
\end{proof}

\begin{thm}
Theorem \ref{Ax-Schanuel for variations of mixed Hodge structures} holds in case (3). 
\end{thm}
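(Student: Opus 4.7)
The strategy is to use Lemma~\ref{graph connected, case 3} to pin $p_D(U)$ inside $\tbN(\ZZ)^+ \cdot \Phi$ (after a translation), and then apply the definable Chow theorem to the image of $U$ in an appropriate algebraic quotient, forcing $V$ into a proper weak Mumford-Tate subdomain.

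First, I would reduce to the case of a single index $i$ in Lemma~\ref{graph connected, case 3}. Since $U$ is irreducible, so is $p_D(U)$, and $Z = r(p_D(U))$ is connected. The pieces $\gamma_i \bigcup_{\sigma\in\tbN(\ZZ)^+}\sigma\Phi_{\RR}$, indexed by the distinct classes $[\gamma_i] \in \tbW(\QQ)$, are $\tbN(\ZZ)^+$-invariant and analytically separated in $r(D)$ (their mutual overlaps lie in lower-dimensional boundaries), so connectedness of $Z$ forces it into a single such piece. Replacing $V$ and $U$ by $\gamma_i^{-1}V$ and $\gamma_i^{-1}U$, which preserves the Ax-Schanuel hypotheses because $\tbN$ is normal in $\tbP$ by Theorem~\ref{normality of stabilizer} (so $\gamma_i^{-1}\tbN\gamma_i=\tbN$ and $\dim V$, $\dim U$ are unchanged), I may assume $Z \subset \tbN(\ZZ)^+\cdot\Phi_{\RR}$, equivalently $p_D(U)\subset \tbN(\ZZ)^+\cdot\Phi$ by the $\tbP(\RR)^+$-equivariance of $r$.

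Next, I would exploit that $V$ is $\tbN(\CC)$-invariant to descend to the algebraic quotient $\wc{D}/\tbN(\CC)$. The induced map $q\colon X \times \wc{D}\to X \times (\wc{D}/\tbN(\CC))$ sends $V$ to an algebraic subset $q(V)$, while $q(U)\subset X\times q(\Phi)$ is a closed analytic definable subset of an algebraic variety. By the Peterzil-Starchenko definable Chow theorem~\cite{PS}, $q(U)$ is itself algebraic. Combining this with $\dim p_{\tbL}(Z)=0$ and the isomorphism $j$ of Theorem~\ref{splitting of retraction of weak mixed domain} (under which the $\tbL$-direction is complementary to $\tbN$), I deduce that $q(U)$ is contained in a fiber of the algebraic projection $\wc{D}/\tbN(\CC)\to \wc{D}/\tb{K}(\CC)$, where $\tb{K}:=\tbN\cdot\tbU$; the subgroup $\tb{K}$ is normal in $\tbP$ because $\tbU$ is the unipotent radical of $\tbP$ and $\tbN$ is normal in $\tbP$. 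Such a fiber is a translate of the weak Mumford-Tate domain $D(\tb{K})$, so $p_X(U)\subset \varphi^{-1}\pi(D(\tb{K}))$ is contained in a weakly special subvariety.

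It remains to check that $D(\tb{K})\subsetneq D$; this holds exactly when $\tbL$ is non-trivial, i.e.\ $\tbN\subsetneq \tbG$. In the degenerate case $\tbN\supseteq \tbG$, $V$ is invariant under the entire reductive Levi $\tbG(\CC)$ and also under $\tbU(\CC)$, hence under $\tbP(\CC)$; arguing as in Lemma~\ref{algebraic projection}, either $p_X(V)\subsetneq X$ (and we conclude by Lemma~\ref{Induction on dimension of X}) or $p_X(V)=X$ forces $V=X\times\wc{D}$, contradicting $\dim V-\dim U<\dim\wc{D}$. The main obstacle is the definable Chow step: one must verify that $\wc{D}/\tbN(\CC)$ carries an algebraic structure compatible with the definable structure inherited from $\wc{D}$, and that $q(U)$ is closed in a suitable projective compactification of the quotient; this reduces to checking the $\tbN(\ZZ)^+$-properness of $\Phi$ modulo $\tbN$.
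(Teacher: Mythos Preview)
Your approach diverges substantially from the paper's, and the step you flag as ``the main obstacle'' is a genuine gap, not a routine verification. The paper never passes to a quotient $\wc{D}/\tbN(\CC)$ or tries to name a specific weak Mumford--Tate subdomain containing $p_D(U)$. Instead it shows directly that $p_X(U)\subset X$ is definable and complex analytic, applies definable Chow to $p_X(U)$ itself, and finishes via Lemma~\ref{algebraic projection} (which feeds back into the induction on $\dim X$). Concretely: from Lemma~\ref{graph connected, case 3} and the $\tbP(\ZZ)^+$-equivariance of $r$ one writes
\[
p_X(U)=\bigcup_{i=1}^n\bigcup_{\sigma\in\tbN(\ZZ)^+}p_X\bigl(\gamma_i^{-1}\sigma^{-1}U\cap(X\times\Phi)\bigr).
\]
Each $\gamma_i^{-1}\sigma^{-1}U\cap(X\times\Phi)$ is a union of irreducible components of $\gamma_i^{-1}V\cap W\cap(X\times\Phi)$, using only that $V$ is $\tbN(\CC)$-invariant. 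Since the latter set is definable with finitely many components, only finitely many distinct sets occur as $\sigma$ ranges over $\tbN(\ZZ)^+$; hence $p_X(U)$ is a finite union of definable analytic sets. No quotient construction, and no identification of a specific $D(\tb{K})$, is needed.

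Beyond the quotient issue, your argument has further gaps. First, the reduction to a single index $i$ fails: $\Phi_{\RR}$ is an \emph{open} fundamental set, so the pieces $\gamma_i\bigcup_{\sigma}\sigma\Phi_{\RR}$ overlap on open sets (the definition of fundamental set only requires finitely many overlapping translates, not disjointness), and connectedness of $Z$ does not confine it to one piece. Second, your candidate $\tb{K}=\tbN\cdot\tbU$ is normal in $\tbP$, but the definition of weak Mumford--Tate domain requires normality in $\mcMT_h$; Theorem~\ref{normality of stabilizer} only gives $\tbN\trianglelefteq\tbP$, not $\tbN\trianglelefteq\mcMT$, so $D(\tb{K})$ need not be a weak Mumford--Tate domain in the sense used to define weakly special subvarieties. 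Third, in your degenerate case $\tbN\supseteq\tbG$ you assert $V$ is $\tbU(\CC)$-invariant, but $\tbN\supseteq\tbG$ only forces $\tbN_r=\tbG$; it says nothing about $\tbN_u$ versus $\tbU$, so $\tbP(\CC)$-invariance of $V$ does not follow.
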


\begin{proof}
Firstly, note that to prove Theorem \ref{Ax-Schanuel for variations of mixed Hodge structures}, it suffices to prove it for irreducible components of $V\cap W$, so we can assume $U$ is an irreducible component of $V\cap W$.
By Lemma \ref{graph connected, case 3}, by the definition of $Z$ and that $r$ is $\tbP(\mb{Z})^+$-equivariant, 
\begin{align*}
U&=\bigcup_{i=1}^n\bigcup_{\sigma\in \tbN(\mb{Z})^+}U\cap \left(X\times \gamma_i\sigma \Phi\right)
=\bigcup_{i=1}^n\bigcup_{\sigma\in \tbN(\mb{Z})^+} \gamma_i^{-1} \sigma^{-1} U\cap \left(X\times\Phi\right);
\end{align*}
here $\sigma^{-1}$ and $\gamma_i^{-1}$ can be switched because  $\tbN(\mb{Z})^+:=\tbN(\ZZ)\cap \tbP(\RR)^+$ is normal in $\tbP(\mb{Z})^+$. Since $V$ is algebraic and invariant under $Stab(V)$, and since $\tbN_{\mb{C}}$ is the identity component of the $\mb{C}$-Zariski closure of $Stab(V)$, $V$ is invariant under  $\tbN_{\mb{C}}$. We know
$\gamma_i^{-1}\sigma^{-1}U\cap (X\times \Phi)$ is a finite union of components of 
$$\gamma_i^{-1}\sigma^{-1}(V\cap W)\cap (X\times \Phi)=\gamma_i^{-1}V\cap W\cap (X\times \Phi).$$
Since $\gamma_i^{-1}V\cap W\cap (X\times \Phi)$ has only finitely many components, $\gamma_i^{-1}\sigma^{-1}U\cap (X\times \Phi)$ are equal to finitely many possible sets as $\sigma$ varies, each of which is definable. 
The sets $U$ and $p_X(U)$ are then definable.

For any $(x,d)\in W\cap \bigcup_{i=1}^n (X\times \gamma_i\Phi)$, we know $d\in \gamma_i\Phi$ for some $i=1,\dots, n$, and $\varphi(x)=\pi(d)$.
It follows that for any fixed $x\in X$, if $(x,d)\in W$, then the possible values for $d$ are $\tbP(\ZZ)^+$-translates of each other, thus $d$ can take at most finitely many values (denoted by $d_{x,1},\dots, d_{x,k(x)}$) because $d$ is in finitely many translates of the fundamental domain $\Phi$, which overlaps with finitely many translates of it.
For each $x\in X$ and a choice of $d_{x,j}$ above for some $j=1,\dots, k(x)$, we can choose a local lifting of the period mapping $\varphi$ on some open subset $B_{x,j}$ of $X$ containing $x$ that maps $x$ to $d_{x,j}$. 
We can choose $B_{x,j}$ small enough such that the image $I_{x,j}$ of the local lifting $\varphi_{x,j}$ is bounded and contained in $\gamma_i\Phi$.
Let $B_x$ be the intersection of all the (finitely many) $B_{x,j}$.
For any $x'\in B_x$, we know $d_{x',1},\dots, d_{x',k(x')}\in I_{x,1}\cup\dots,\cup I_{x,k(x)}$.
Suppose $K$ is a compact subset of $X$. 
Choose $x_1,\dots, x_{\ell}\in X$ such that $K\subset B_{x_1}\cup \dots \cup B_{x_\ell}$.
We have 
$$p_X^{-1}(K)\cap  W\cap \bigcup_{i=1}^n (X\times \gamma_i\Phi)\subset K\times \bigcup_{d=1}^{\ell} \bigcup_{j=1}^{k(x_d)} I_{x_d, j}.$$
It follows that $p_X|_U$ is proper because we have proved earlier that 
$$U\subset \bigcup_{i=1}^n (X\times \gamma_i\Phi).$$
The set $p_X(U)$ is then complex analytically constructible in $X$ by Chevalley-Remmert Theorem \cite[p. 291]{L}.
By definable Chow theorem \cite{PS} (see also  \cite{MPT}), $p_X(U)$ is algebraically constructible. 
By Lemma \ref{algebraic projection}, Theorem \ref{Ax-Schanuel for variations of mixed Hodge structures} holds.
\end{proof} 

\appendix
\section{Weak Mumford-Tate domains}\label{Appendix}
Let $\wc{\mc{M}}$ be the projective space defined in  \cite[\S 3.5]{BBKT} parametrizing decreasing filtrations with fixed graded-polarization and Hodge numbers. The graded-polarization and Hodge numbers are chosen to be the same as that of the mixed Hodge structures our VMHS (in Section \ref{Statement of results}) is parametrizing. 
Let $h$ be any mixed Hodge structure in $\mc{M}$.
Let $\textbf{M}$ be a normal algebraic $\mb{Q}$-subgroup of the Mumford-Tate group $\mcMT_h$ of $h$. Let $\tb{M}_u$ be its unipotent radical.  
Let $\tbM(\RR)^+$ be the identity component of $\tbM(\RR)$. 
Let $D(\textbf{M})$ be the $\textbf{M}(\mb{R})^+\tb{M}_u(\mb{C})$-orbit of $h$ in $\wc{\mc{M}}$. 
Let $\wc{D}(\tbM)$ be the $\tbM(\CC)$-orbit of $h$  in $\wc{\mc{M}}$.
It is a complex algebraic set.

\begin{thm}\label{algebraicity of weak MT domain, dual}
The weak Mumford-Tate domain $D(\tbM)$ is  open in $\wc{D}(\tbM)$ in the Archimedean topology, so it inherits a complex analytic structure from $\wc{D}(\tbM)$.
\end{thm}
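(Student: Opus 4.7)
The strategy is to realize both spaces as orbits of $\tbM(\CC)$ and $\tbM(\RR)^+\tbM_u(\CC)$ in a flag variety, and then to compute tangent spaces at $h$ using the Deligne bigrading.

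First, I would observe that a mixed Hodge structure on $\mc{H}_{\CC,\eta}$ with fixed weight filtration, graded polarization, and Hodge numbers is determined by its Hodge filtration $F_h^\bullet$, and the set of all such filtrations is the complex flag variety $\wc{\mc{M}}$ of \cite[\textsection 3.5]{BBKT}. Since $\tbM$ is an algebraic $\QQ$-subgroup of $\mcMT_h\subset \tbGL(\mc{H}_{\QQ,\eta})$, the action of $\tbM(\CC)$ on $\wc{\mc{M}}$ is algebraic, so the orbit $\wc{D}(\tbM)$ is a locally closed algebraic subvariety; equivalently $\wc{D}(\tbM)\simeq \tbM(\CC)/P_h$, where $P_h\subset \tbM(\CC)$ is the algebraic stabilizer of $F_h^\bullet$. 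This gives the first assertion.

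For the openness assertion I would analyze the differential of the orbit map at $h$ via the Deligne bigrading $\mf{m}_{\CC}=\bigoplus_{p,q}\mf{m}^{p,q}$ of $\mf{m}:=\Lie(\tbM)$ induced by $h$. In this bigrading, the filtration-preserving part is $\Lie(P_h)=\bigoplus_{p\geq 0}\mf{m}^{p,q}$, so $T_h\wc{D}(\tbM)\simeq \bigoplus_{p<0}\mf{m}^{p,q}$, while the unipotent radical of $\tbM$ has Lie algebra $\Lie(\tbM_u)_{\CC}=\bigoplus_{p+q<0}\mf{m}^{p,q}$. Complex conjugation with respect to the $\QQ$-structure of $\tbM$ swaps $\mf{m}^{p,q}$ and $\mf{m}^{q,p}$. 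For any $v\in\mf{m}^{p,q}$ with $p<0$, one of two cases occurs: if $q\geq 0$, then $\bar v\in \Lie(P_h)$, so that $v+\bar v\in \mf{m}_{\RR}$ and $i(v-\bar v)\in \mf{m}_{\RR}$ represent $v$ and $iv$ in $T_h\wc{D}(\tbM)$ respectively; if $q<0$, then $v,iv\in \Lie(\tbM_u)_{\CC}$ already. In either case $v$ and $iv$ lie in the image of $\mf{m}_{\RR}+\Lie(\tbM_u)_{\CC}$ modulo $\Lie(P_h)$, so $\mf{m}_{\RR}+\Lie(\tbM_u)_{\CC}$ surjects onto $T_h\wc{D}(\tbM)$ as a real vector space. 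Hence the orbit map $\tbM(\RR)^+\tbM_u(\CC)\to \wc{D}(\tbM)$ is a submersion at the identity, and $D(\tbM)$ is open in $\wc{D}(\tbM)$ in the Archimedean topology.

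The main technical point to pin down is the pair of bigrading identifications $\Lie(P_h)=\bigoplus_{p\geq 0}\mf{m}^{p,q}$ and $\Lie(\tbM_u)_{\CC}=\bigoplus_{p+q<0}\mf{m}^{p,q}$. These rely on viewing $\mf{m}$ as a mixed Hodge structure via the adjoint action of $h$, with its weight filtration matching the Mumford-Tate weight filtration and its negative-weight part cutting out the unipotent radical — standard facts from the theory of mixed Hodge structures on Lie algebras (\emph{cf.} \cite{CKS}, \cite{BBKT}). With these in place, the rest is linear algebra on the bigraded pieces.
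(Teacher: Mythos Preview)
Your approach is essentially the paper's: identify $\Lie(P_h)=F^0\mf{m}_{\CC}=\bigoplus_{p\geq 0}\mf{m}^{p,q}$ via the Deligne bigrading and show that $\mf{m}_{\RR}+\mf{m}_{u,\CC}$ surjects onto $\mf{m}_{\CC}/F^0\mf{m}_{\CC}$, so the orbit map is a submersion. Two points deserve comment.

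First, the sentence ``complex conjugation \dots\ swaps $\mf{m}^{p,q}$ and $\mf{m}^{q,p}$'' is only true for $\RR$-split mixed Hodge structures; for Deligne's bigrading one has in general only
\[
\overline{\mf{m}^{p,q}}\subset \mf{m}^{q,p}+\bigoplus_{r<q,\,s<p}\mf{m}^{r,s}.
\]
This does not break your argument: in your case $p<0$, $q\geq 0$, each error term $\mf{m}^{r,s}$ with $r<q$, $s<p<0$ lies either in $F^0\mf{m}_{\CC}$ (if $r\geq 0$) or in $\bigoplus_{r+s<0}\mf{m}^{r,s}\subset\mf{m}_{u,\CC}$ (if $r<0$, since then $r+s<0$). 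So you still get $\bar v\in \Lie(P_h)+\mf{m}_{u,\CC}$ and hence $v\in \mf{m}_{\RR}+\mf{m}_{u,\CC}+\Lie(P_h)$. The paper avoids this case analysis by writing $\mf{m}_{\CC}=\mf{m}_{u,\CC}+F^0\mf{m}_{\CC}+\overline{F^0\mf{m}_{\CC}}$ and observing that any $X\in\overline{F^0\mf{m}_{\CC}}$ satisfies $X=(X+\bar X)-\bar X\in\mf{m}_{\RR}+F^0\mf{m}_{\CC}$.

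Second, your claim $\Lie(\tbM_u)_{\CC}=\bigoplus_{p+q<0}\mf{m}^{p,q}$ needs a word of justification (e.g.\ via $\tbM_u=\tbM\cap\mcMT_{h,u}$ from \cite[Prop.~2.13]{G2} and the corresponding identity for $\mcMT_h$); the paper only uses the inclusion $\bigoplus_{p+q<0}\mf{m}^{p,q}\subset\mf{m}_{u,\CC}$, which follows since the left side is a nilpotent ideal. For the algebraicity of $\wc{D}(\tbM)$, your appeal to the standard fact that algebraic group orbits are locally closed is more direct than the paper's route through Chevalley--Remmert and definable Chow.
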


\begin{proof} 
Let $P^\mc{M}$ be the real algebraic group called $\tbG$ in  \cite[\S 3.5]{BBKT}, whose complex points acts transitively on $\wc{\mc{M}}$.
Since $\tbM$ is normal in $\mcMT_{h}$, the adjoint action of $\mcMT_{h}$ stabilizes $\mf{m}:=\Lie \tbM$. Hence,  $\mf{m}$ is equipped with a mixed Hodge structure. 
Let  $\mf{p}^{\mc{M}}_{\CC}=\bigoplus_{r,s} \mf{p}^{\mc{M}, r,s}$ and $\mf{m}_\mb{C}=\bigoplus_{r,s} \mf{m}^{r,s}$ be the Deligne bigradings \cite{D} of the mixed Hodge structures on $\mf{p}^{\mc{M}}:=\Lie P^{\mc{M}}$ and $\mf{m}$ respectively.  
By functoriality, $\mf{m}^{r,s}=\mf{m}_{\CC}\cap \mf{p}^{\mc{M},r,s}$.
Let $\mf{b}^{\mc{M}}$ be the Lie algebra of the $P^{\mc{M}}(\CC)$-stabilizer $B^{\mc{M}}$ of $h$. 
By (21) of \cite{PP},  
$$\mf{b}^{\mc{M}}=F^0\mf{p}^{\mc{M}}_{\CC}=\bigoplus_{r\geq 0;s} \mf{p}^{\mc{M},r,s}.$$
Let $\mf{b}$ be the Lie algebra of the $\tbM(\CC)$-stabilizer $B$ of $h$. Then 
$$\mf{b}=\mf{m}_{\CC}\cap \mf{b}^{\mc{M}}=\mf{m}_{\CC}\cap \bigoplus_{r\geq 0;s} \mf{p}^{\mc{M},r,s}=\bigoplus_{r\geq 0;s}\mf{m}^{r,s}.$$
Since $\bigoplus_{r+s\leq -1}\mf{m}^{r,s}$ is a nilpotent ideal of $\mf{m}_{\CC}$, we know 
$\mf{m}_{\CC}=\mf{m}_{\CC, u}+F^0\mf{m}_{\CC}+ \ol{F^0\mf{m}_{\CC}}$, where $\mf{m}_{\CC, u}$ is the Lie algebra of the unipotent radical of $\tbM(\CC)$.
For any $X\in \ol{F^0\mf{m}_{\CC}}$, we have  $X=(X+\ol{X})-\ol{X}\in \mf{m}_{\RR}+F^0\mf{m}_{\CC}$.
Hence, $\mf{m}_{\CC}\subset\mf{m}_{\RR}+\mf{m}_{\CC,u}+F^0\mf{m}_{\CC}$.
Therefore, the canonical map 
$$(\mf{m}_{\RR}+\mf{m}_{\CC,u})/ ((\mf{m}_{\RR}+\mf{m}_{\CC,u})\cap \mf{b})\ra \mf{m}_{\CC}/\mf{b}$$
is surjective. The canonical map $D(\tbM)\ra \wc{D}(\tbM)$ is then a submersion, and thus it is an open embedding. 
\end{proof}

 For simplicity, write $\mcMT:=\mcMT_{h}$.
The mixed Hodge structure $h$ defines a representation $\rho_{\CC}:\tb{S}_\mb{C}\ra \textbf{GL}(\mc{H}_{\mb{C},\eta})$ \cite[p. 7]{K}. Let $\mc{X}_{\mcMT}$ be the $\mcMT(\RR)\mcMT_u(\CC)$-conjugacy class of $\rho_{\CC}$.
 Let $D_{\mcMT}^+$ be the connected mixed Mumford-Tate domain, i.e. the $\mcMT(\RR)^+\mcMT_u(\CC)$-orbit of $h$.
The tuple $(\mcMT, \mc{X}_{\mcMT}, D_{\mcMT}^+)$ is a connected mixed Hodge datum \cite[p. 10]{K}.

Let $\textbf{M}$ be a normal algebraic $\mb{Q}$-subgroup of $\mcMT$.
Composing $\rho_{\CC}$ with the quotient map $\mcMT(\CC)\ra (\mcMT/\tbM)(\CC)$ gives a representation 
$$\ol{\rho}_{\CC}:\tb{S}_{\CC}\ra (\mcMT/\tbM)(\CC).$$ 
Let $\mc{X}_{\mcMT/\tbM}$ be the  $(\mcMT/\tbM)(\RR)(\mcMT/\tbM)_u(\CC)$-conjugacy class of $\ol{\rho}_{\CC}$ in the set $\Hom(\tb{S}_{\CC}, (\mc{MT}/\tbM)(\CC))$. Fix an embedding of $\mcMT/\tbM$ into the automorphism group of some vector space.

\begin{lemma}\label{quotient of mixed hodge structure}
The representation $\ol{\rho}_{\CC}$ satisfies (1), (2), and (3) of \cite[Prop. 2.3]{K}.
\end{lemma}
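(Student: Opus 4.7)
The plan is to verify each of the three conditions for $\ol{\rho}_{\CC}$ by transferring the corresponding property of $\rho_{\CC}$ through the quotient morphism $q:\mcMT\to \mcMT/\tbM$. The key structural observation is that $q$ is a morphism of connected algebraic $\QQ$-groups, hence defined over $\RR$, so it intertwines the complex conjugations on $\mcMT(\CC)$ and $(\mcMT/\tbM)(\CC)$; moreover, by \cite[Cor. 14.11]{Bor}, $q$ carries $\mcMT_u$ onto $(\mcMT/\tbM)_u$. Throughout, I will use that $\ol{\rho}_{\CC} = q_{\CC}\circ \rho_{\CC}$ by construction.

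First I would handle the condition describing the weight cocharacter: the cocharacter $w:\mb{G}_{m,\RR}\to \tb{S}_{\RR}$ composed with $\ol{\rho}_{\CC}$ equals $q_{\CC}\circ \rho_{\CC}\circ w$, and since both $q$ and $\rho_{\CC}\circ w$ are defined over $\QQ$ (the latter by the assumption that $\rho_{\CC}$ satisfies the corresponding condition of \cite[Prop. 2.3]{K}), the composition is defined over $\QQ$ as well. Next I would verify the reality-type condition, typically phrased as: the element $\overline{\rho_{\CC}}\cdot \rho_{\CC}^{-1}$, where the bar denotes complex conjugation on $\mcMT(\CC)$, lies in $\mcMT_u(\CC)$. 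Applying $q_{\CC}$, conjugation-equivariance gives $\overline{\ol{\rho}_{\CC}}\cdot \ol{\rho}_{\CC}^{-1}=q_{\CC}(\overline{\rho_{\CC}}\cdot \rho_{\CC}^{-1})$, which lies in $q_{\CC}(\mcMT_u(\CC))\subset (\mcMT/\tbM)_u(\CC)$.

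Finally I would verify the polarizability condition on the associated graded. For any faithful representation of $\mcMT/\tbM$, pulling back along $q$ realizes it as a summand of a representation of $\mcMT$, whose weight-graded pieces carry polarizations by the hypothesis on $\rho_{\CC}$; since $q$ takes the weight filtration to the weight filtration and $(\mcMT_u)$ to $(\mcMT/\tbM)_u$, the associated graded representation of $\ol{\rho}_{\CC}$ is a subquotient of a polarizable pure Hodge structure, hence polarizable.

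I expect the main obstacle to be the reality condition in (1), because one must be careful that the unipotent witness element descends correctly through $q_{\CC}$ and lands in the unipotent radical of the quotient, rather than merely in some unipotent subgroup; this uses the surjectivity of $q(\mcMT_u)\twoheadrightarrow (\mcMT/\tbM)_u$ from \cite[Cor. 14.11]{Bor} in an essential way. The other two conditions should be essentially formal consequences of the fact that $q$ is a $\QQ$-morphism compatible with all relevant filtrations.
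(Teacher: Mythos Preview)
Your treatment of the first two conditions is essentially the paper's argument: both of you use that $q:\mcMT\to\mcMT/\tbM$ is a $\QQ$-morphism carrying $\mcMT_u$ into $(\mcMT/\tbM)_u$, so the weight cocharacter stays $\QQ$-rational and the unipotent witness in the reality condition descends. The paper compresses this into a single line invoking the $\QQ$-morphism $\mcMT_u\to(\mcMT/\tbM)_u$, but the content is the same.

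The gap is in condition~(3). You have guessed that (3) is a polarizability condition on the associated graded and argue via pullback of representations and closure under subquotients. In \cite[Prop.~2.3]{K}, however, condition~(3) is not polarizability: it is a constraint on the weights of the adjoint representation $\Ad\circ\ol{\rho}_{\CC}$ on $\Lie(\mcMT/\tbM)$, to the effect that the Lie algebra of the unipotent radical sits in the negative-weight part. The paper verifies this by functoriality of the adjoint representation: the commuting square
\[
\begin{tikzcd}
\tb{S}_{\CC}\arrow[r] & \mcMT(\CC) \arrow[d,"\Ad_1"'] \arrow[r,"q"] & (\mcMT/\tbM)(\CC) \arrow[d,"\Ad_2"] \\
& \tbGL(\mf{mt}) \arrow[r] & \tbGL(\mf{mt}/\mf{m})
\end{tikzcd}
\]
shows that the differential $\mf{mt}\to\mf{mt}/\mf{m}$ preserves the Deligne bigradings; since $\rho_{\CC}$ already satisfies (3) for $\mf{mt}$ and $(\mcMT/\tbM)_u=\mcMT_u\tbM/\tbM$, the corresponding grading condition passes to the quotient. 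Your polarizability argument, while plausible as a separate statement, does not address the actual condition~(3), so that step should be replaced by the adjoint-representation argument.
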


\begin{proof}
 By \cite[Prop. 2.3]{K}, $\rho_{\CC}$ satisfies (1), (2), and (3).
The representation $\ol{\rho}_{\CC}$ satisfies (1) and (2)  because we have the $\QQ$-morphism $\mcMT_u\ra (\mcMT/\tbM)_u$. Let $\Ad_1: \mcMT\ra \tbGL(\mf{mt})$ and $\Ad_2:\mcMT/\tbM\ra \tbGL(\mf{mt}/\mf{m})$ be the adjoint representations. By the functoriality of the adjoint representation, we have the following commutative diagram
\begin{center}
\begin{tikzcd}
\tb{S}_{\CC}\arrow[r]& \mcMT(\CC) \arrow[d, "\Ad_1"]\arrow[r] & (\mcMT/\tbM)(\CC)\arrow[d, "\Ad_2"]
\\&  \Ad(\mc{MT}(\CC))\arrow[r]& \Ad((\mcMT/\tbM)(\CC)).
\end{tikzcd}
\end{center}
Hence, the differential $\mf{mt}\ra \mf{mt}/\mf{m}$ preserves the gradings, so  $\ol{\rho}_{\CC}$ satisfies (3).
\end{proof}
 
Fix Levi subgroups $\tbM_r$ and $\mcMT_r$ for $\tbM$ and $\mcMT$ respectively such that $\tbM_r\subset \mcMT_r$. By \cite[Corollary 14.11]{Bor}, $\mcMT_u\tbM/\tbM$ is the unipotent radical of $\mcMT/\tbM$. Let $(\mcMT/\tbM)_r:=\mcMT_r\tbM/\tbM$, which is a Levi subgroup of $\mcMT/\tbM$. Since $\tbM$ is normal in $\mcMT$, we have $\tbM_r=\tbM\cap \mcMT_r$ and $\tbM_u=\tbM\cap \mcMT_u$  by \cite[Prop. 2.13]{G2}. Hence, $(\mcMT/\tbM)_u\simeq \mcMT_u/\tbM_u$  and $(\mcMT/\tbM)_r\simeq \mcMT_r/\tbM_r$.

By Lemma \ref{quotient of mixed hodge structure} and \cite[Prop. 3.1]{K},  there is a complex manifold $D_{\mcMT/\tbM}$ attached to $\mc{X}_{\mcMT/\tbM}$. We also have a connected mixed Hodge data morphism 
$$(\mcMT, \mc{X}_{\mcMT}, D_{\mcMT}^+)\ra (\mcMT/\tbM,\mc{X}_{\mcMT/\tbM},D_{\mcMT/\tbM}^+),$$
where the map $D_{\mcMT}^+\ra D_{\mcMT/\tbM}^+$ is given by $\gamma_r\gamma_u \cdot h\mapsto (\gamma_r \tbM_r(\RR)^+)(\gamma_u \tbM_u(\CC))\cdot \ol{h}$ for any $\gamma_r\in \mcMT_r(\RR)^+$ and $\gamma_u\in \mcMT_u(\CC)$, and $\ol{h}$ is the mixed Hodge structure attached to $\ol{\rho}_{\CC}$.

Let $h_0$, $\rho_0$, $\tbP$, $\tbU$ and $D$ be defined as in Section  \ref{Statement of results}.
Let $h$ be any point in $D_{\mc{MT}}^+$.
 By Andr\'{e} \cite[Proof of Theorem 1]{A}, $\tbP$ is normal in $\mcMT$.
Let $f$ be the  morphism from the connected mixed Hodge datum $(\mcMT, \mc{X}_{\mcMT}, D_{\mcMT}^+)$ to the connected mixed Hodge datum  $(\mcMT/\tbP, \mc{X}_{\mcMT/\tbP}, D_{\mcMT/\tbP}^+)$ as above.
Let $\mf{mt}_\mb{C}=\bigoplus_{p,q} \mf{mt}^{p,q}$ be the Deligne bigrading \cite{D} of the mixed Hodge structure on the Lie algebra $\mf{mt}$ of $\mcMT$. Let $\mf{b}$ be the Lie algebra of the stabilizer  $B$  in $\mcMT(\mb{C})$ of $h$. By Remark 2.4 and (21) of \cite{PP},  $\mf{b}$ can be identified with $\bigoplus_{p\geq 0; q} \mf{mt}^{p,q}$. Let $\mf{v}$ be the Lie algebra of $V=\mcMT(\mb{R})^+\mcMT_u(\mb{C})\cap B$.  By the definition of mixed Hodge datum, $\mf{mt}_{u,\mb{C}}=\bigoplus_{p+q\leq -1}\mf{mt}^{p,q}$. We have  
\begin{align*}
\mf{v}
&=(\mf{mt}_{\mb{R}}+\mf{mt}_{u,\mb{C}})\cap \mf{b}
\\&=\left(\left(\bigoplus_{p+q= 0} \mf{mt}^{p,q}\right)_{\RR}\oplus \left(\bigoplus_{p+q\leq -1}\mf{mt}^{p,q}\right)\right)\cap \bigoplus_{p\geq 0; q} \mf{mt}^{p,q}
\\&=\mf{mt}^{0,0}_\mb{R}\oplus \left(\bigoplus_{p+q\leq -1;p\geq 0} \mf{mt}^{p,q}\right).
\end{align*}
Note that we have $p+q\leq 0$ in the above calculation because the Lie algebra action preserves weight filtration.
From this expression of $\mf{v}$, we have an identification
$$\alpha: T_{h}D_{\mcMT}^+ \simeq  \left(\bigoplus_{p\neq 0} \mf{mt}^{p,-p}\right)_\mb{R}\oplus \left(\bigoplus_{p+q\leq -1;p<0} \mf{mt}^{p,q}\right)=: (\mf{mt}_{\mb{R}}+\mf{mt}_{u,\mb{C}})^-.$$
Let $\beta$ be the projection 
$$\mf{mt}_{\mb{R}}+\mf{mt}_{u,\mb{C}}=\left(\bigoplus_{p+q=0} \mf{mt}^{p,q}\right)_\mb{R}\oplus \left(\bigoplus_{p+q\leq -1}\mf{mt}^{p,q}\right)\ra (\mf{mt}_{\mb{R}}+\mf{mt}_{u,\mb{C}})^-.$$
Denote the kernel of $\beta$ by $(\mf{mt}_{\mb{R}}+\mf{mt}_{u,\mb{C}})^+$.
 Let $\mf{p}$ and $\mf{u}$ be the Lie algebras of $\tbP$ and $\tbU$ respectively.
Replacing $\mf{mt}$ by $\mf{mt}/\mf{p}$, we also have maps $\gamma$ and $\delta$, akin to $\alpha$ and $\beta$ respectively.

\begin{lemma}\label{D and fiber of quotient}
Suppose $D_0$ is an irreducible subvariety of $D_{\mc{MT}}^+$ invariant under $\tbP(\RR)^+\tbU(\CC)$ and suppose its image under $f$ is a point. Then $\tbP(\RR)^+\tbU(\CC)$ acts transitively on $D_0$, and $D_0\subset D$. In particular, if $D_0$ is the connected component  of $f^{-1}(f(h_0))$ that contains some $h_0\in D$, then  $D_0=D$. 
\end{lemma}

\begin{proof}
Let $h\in D_0$.
Define a map $g_1:\mcMT(\mb{R})^+\mcMT_u(\mb{C})\ra D_{\mcMT}^+$ by $m\mapsto m\cdot h$ for any $m\in \mcMT(\mb{R})^+\mcMT_u(\mb{C})$. Similarly, we have a map $g_2: \tbP(\mb{R})^+\tbU(\mb{C})\ra D_0$, and also a map $$g_3:(\mcMT/\tb{P})(\mb{R})^+(\mcMT/\tb{P})_u(\mb{C})\ra D_{\mcMT/\tbP}^+.$$ The differentials $dg_1$ and $dg_3$ of $g_1$ and $g_3$ are $\alpha^{-1}\circ \beta$ and $\gamma^{-1}\circ \delta$ respectively. 
Let $\tbP_r$ be a fixed Levi subgroup of $\tbP$ that is contained in $\mcMT_r$. Let $(\mcMT/\tbP)_r:=\mcMT_r\tbP/\tbP$, which is a Levi subgroup of $\mcMT/\tbP$.
Note that $\mf{p}_{r,\mb{R}}\oplus\mf{u}_{\mb{C}}=\mf{p}_{\RR}+\mf{u}_{\CC}$, $ \mf{mt}_{r,\mb{R}}\oplus\mf{mt}_{u,\mb{C}}=\mf{mt}_{\RR}+\mf{mt}_{u,\CC}$ and $(\mf{mt}/\mf{p})_{r,\mb{R}}\oplus(\mf{mt}/\mf{p})_{u,\mb{C}}=(\mf{mt}/\mf{p})_{\RR}+(\mf{mt}/\mf{p})_{u,\CC}$.
We have the following commutative diagram of differentials
\begin{center}
\begin{tikzcd}
T_{h_0}D_0\arrow[r, hook] & T_{h_0}D_{\mcMT}^+\arrow[r] & T_{f(h_0)}D_{\mcMT/\tbP}^+
\\\mf{p}_{r,\mb{R}}\oplus\mf{u}_{\mb{C}}\arrow[r, hook] \arrow[u, "dg_2"]& \mf{mt}_{r,\mb{R}}\oplus\mf{mt}_{u,\mb{C}} \arrow[r, "q"]\arrow[u, "dg_1"]& (\mf{mt}/\mf{p})_{r,\mb{R}}\oplus(\mf{mt}/\mf{p})_{u,\mb{C}}\arrow[u, "dg_3"],
\end{tikzcd}
\end{center}
where the composition of the upper horizontal maps is zero, and where $q$ is the componentwise quotient by $\mf{p}_{r,\RR}$ and $\mf{p}_{u,\CC}$. Let $v\in T_{h_0}D_0$. Since $\mcMT(\mb{R})^+\mcMT_u(\mb{C})$ acts transitively on $D_{\mcMT}^+$, there exists $w\in  \mf{mt}_{\mb{R}}+\mf{mt}_{u,\mb{C}}$ such that $(dg_1)(w)=v$. Write $w=w^-+w^+$, where $w^-\in (\mf{mt}_{\mb{R}}+\mf{mt}_{u,\mb{C}})^-$ and $w^+\in (\mf{mt}_{\mb{R}}+\mf{mt}_{u,\mb{C}})^+$. Then $(dg_1)(w^-)=(dg_1)(w)-(dg_1)(w^+)=v$.   
By commutativity, we then know $q(w^-)$ is in the kernel of $dg_3=\gamma^{-1}\circ\delta$, so $q(w^-)\in ((\mf{mt}/\mf{p})_{\mb{R}}+(\mf{mt}/\mf{p})_{u,\mb{C}})^+$. Moreover, the quotient morphism $f$ of connected mixed Hodge data induces a morphism $df: \mf{mt}\ra \mf{mt}/\mf{p}$ of mixed Hodge structures, so $q(w^-)\in ((\mf{mt}/\mf{p})_{\mb{R}}+(\mf{mt}/\mf{p})_{u,\mb{C}})^-$. 
Hence, $q(w^-)=0$. Since $q$ is the componentwise quotient by $\mf{p}_{r,\RR}$ and $\mf{p}_{u,\CC}$, and since $\mf{p}_{u,\CC}=\mf{u}_{\CC}$, we thus have $w^-\in \mf{p}_{r,\mb{R}}+\mf{u}_\mb{C}$. Then since $dg_2$ is the restriction of $dg_1$, the differential $dg_2$ is surjective.  Since $h\in D_0$ is arbitrary, we know $g_2$ is a submersion, and thus an open map.  Again since $h\in D_0$ is arbitrary, every $\tbP(\mb{R})^+\tbU(\mb{C})$-orbit in $D_0$ is open. By the connectedness of $D_0$, the group $\tbP(\mb{R})^+\tbU(\mb{C})$ acts transitively on $D_0$. Therefore, $D_0\subset D$. Since $f(D)=\{f(h_0)\}$, if $D_0$ is the connected component of $f^{-1}(f(h_0))$ that contains  $h_0$, we then have $D\subset D_0$, so $D=D_0$.
\end{proof}

\begin{lemma}\label{Galois descent, Zariski closure}
Let $K$ be a subfield of $\CC$. Let $E$ be a subset of the set $\mb{A}^n_K(K)$ of $K$-points of the $n$-dimensional affine space. Let $M$ be the smallest closed algebraic $K$-subvariety of $\mb{A}^n_K$ such that $E\subset M(K)$. The complex variety $M_{\CC}$ is the smallest closed algebraic $\CC$-subvariety of $\mb{A}^n_{\CC}$ such that $E\subset M(\CC)$.
\end{lemma}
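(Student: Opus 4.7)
The plan is to translate the statement about subvarieties into one about ideals of polynomials, and then exploit the freeness of $\CC$ as a $K$-vector space. Let $I(E) \subseteq K[x_1,\ldots,x_n]$ denote the ideal of polynomials over $K$ vanishing on $E$; by definition $M = V(I(E))$. Let $J(E) \subseteq \CC[x_1,\ldots,x_n]$ denote the ideal of polynomials over $\CC$ vanishing on $E$, so the smallest closed algebraic $\CC$-subvariety of $\mb{A}^n_\CC$ containing $E$ is $N := V(J(E))$. Since base change commutes with taking vanishing loci, $M_\CC = V(I(E)\cdot \CC[x_1,\ldots,x_n])$, and the lemma reduces to the equality of ideals $J(E) = I(E)\cdot \CC[x_1,\ldots,x_n]$.

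The inclusion $I(E)\cdot \CC[x_1,\ldots,x_n] \subseteq J(E)$ is immediate. For the reverse inclusion, I would fix a $K$-basis $(c_\alpha)_{\alpha\in A}$ of $\CC$. Any $f \in J(E)$ then expands uniquely as $f = \sum_\alpha c_\alpha g_\alpha$ with $g_\alpha \in K[x_1,\ldots,x_n]$ (only finitely many nonzero). For every $p \in E \subseteq K^n$, evaluation yields $0 = f(p) = \sum_\alpha c_\alpha g_\alpha(p)$; since each $g_\alpha(p) \in K$ and the $c_\alpha$ are $K$-linearly independent in $\CC$, each $g_\alpha(p)$ vanishes. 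Hence every $g_\alpha$ lies in $I(E)$, which forces $f \in I(E)\cdot \CC[x_1,\ldots,x_n]$, as desired.

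There is no serious obstacle: the whole argument is a clean application of the freeness of $\CC$ over $K$, used to extract $K$-rational relations from the single $\CC$-rational relation $f(p)=0$ at each $p \in E$. The only point to handle carefully is the interpretation of ``smallest closed algebraic $K$-subvariety'' as the (reduced) vanishing locus of the automatically radical ideal $I(E)$; once this is unwound, the two inclusions $N \subseteq M_\CC$ and $M_\CC \subseteq N$ follow formally from the ideal equality above.
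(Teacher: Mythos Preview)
Your proof is correct, but it follows a genuinely different route from the paper's. The paper argues by Galois descent: given any closed $\CC$-subvariety $M'_{\CC}$ containing $E$, it forms the intersection $\bigcap_{\sigma\in\Gal(\CC/K)}\sigma M'_{\CC}$, observes this is $\Gal(\CC/K)$-stable and still contains $E$ (since $E\subset K^n$ is Galois-fixed), and then invokes descent (using that $K$ is perfect) to produce a $K$-model $L$ with $M\subset L$, whence $M_{\CC}\subset L_{\CC}\subset M'_{\CC}$. Your argument bypasses descent entirely: by choosing a $K$-basis of $\CC$ and decomposing each $f\in J(E)$ accordingly, you extract $K$-polynomials $g_\alpha\in I(E)$ directly from the linear-independence relation $\sum_\alpha c_\alpha g_\alpha(p)=0$ at each $p\in E$. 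This is more elementary, avoids any appeal to perfectness or to the descent machinery in \cite{Mil}, and in fact shows the stronger statement $J(E)=I(E)\cdot\CC[x_1,\ldots,x_n]$ at the level of ideals; it would work verbatim for any field extension $K\subset L$, not just $K\subset\CC$. The paper's approach, on the other hand, is more geometric in flavor and fits naturally into the Galois-theoretic language used elsewhere in the appendix.
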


\begin{proof}
Suppose $M'_{\CC}$ is a closed algebraic $\CC$-subvariety of $\mb{A}^n_{\CC}$ such that $E\subset M'_{\CC}(\CC)$. The complex algebraic variety $\bigcap_{\sigma\in \Gal(\CC/K)} \sigma M'_{\CC}$ is stable under the Galois action by $\Gal(\CC/K)$. Since $K$ is perfect, the variety $\bigcap_{\sigma\in \Gal(\CC/K)} \sigma M'_{\CC}$ has a closed affine model $L$ over $K$  by Galois descent \cite[Prop. 16.1, 16.8]{Mil}. Since $E\subset \mb{A}^n_K(K)$, we know $\sigma E=E$ for any $\sigma\in \Gal(\CC/K)$. Hence, $E\subset L(K)$, and thus $M\subset L$. Therefore, $M_{\CC}\subset L_{\CC}\subset M'_{\CC}$, as desired.
\end{proof}

Recall $X$ and $D$ in Section \ref{Statement of results}. Let $\wt{X}$ be the universal cover of $X$.

\begin{lemma}\label{weak period map}
The domain $D$ contains the image $\Pi$ of the period mapping $\wt{X}\ra  D_{\mcMT}^+$. Let $\Pi^{Zar}$  be the Zariski closure of $\Pi$ in the $\tbP(\CC)$-orbit $\wc{D}$ of $h$. The domain $D$ contains an open subset of $\Pi^{\Zar}$. Moreover, $\Pi^{\Zar}$ contains $D$.
\end{lemma}

\begin{proof}
Since $\tbP(\mb{Q})\cap \Gamma$ is of finite index in $\Gamma$, the composition $\wt{X}\ra D_{\mcMT}^+\ra D_{\mcMT/\tbP}^+$ descends to a period mapping on $X$ whose associated GPVMHS has finite monodromy.
Replacing $X$ by a finite \'{e}tale covering if necessary, we have a GPVMHS with trivial monodromy. By rigidity \cite[Theorem 7.12]{BZ}, this GPVMHS and its associated period mapping are constant. Hence, the lifting $\widetilde{X}\ra D_{\mcMT}^+\ra  D_{\mcMT/\tbP}^+$ is constant, with value $f(h_0)$. Hence,  the image $\Pi$ of $\wt{X}$ in $D_{\mcMT}^+$ lies in the connected component  of $f^{-1}(f(h_0))$ that contains  $h_0$, and thus lies in $D$ by Lemma \ref{D and fiber of quotient}. 
By Lemma \ref{algebraicity of weak MT domain, dual}, the domain $D$ is open in $\wc{D}$. Since $\wc{D}$ is algebraic, it contains $\Pi^{\Zar}$. Thus, $D\cap \Pi^{\Zar}$ is open in $\Pi^{\Zar}$. The image $\Pi$ is invariant under the monodromy group of the variation. Since $\tbP$ is defined as the identity component of the $\QQ$-Zariski closure of this monodromy group, $\Pi^{\Zar}$ is invariant under $\tbP$, so $\Pi^{\Zar}$ contains $D$.  
\end{proof}

For an irreducible analytic set $E\subset X\times \widecheck{D}$, let $\tbP_E$ be the connected algebraic  monodromy group of the GPVMHS restricted to the smooth locus of $p_X(E)^{\Zar}$. Let $\varphi: X\ra \tbP(\ZZ)^+\bs D$ be the period mapping. Let $\pi: D\ra \tbP(\ZZ)^+\bs D$ be the projection. 
Let $E^{ws}$ be the irreducible component of $\varphi^{-1}\pi(D(\tbP_E))$ containing $p_X(E)$.

\begin{lemma}\label{monodromy and smallest weakly special}
The set $E^{ws}$ is the smallest weakly special subvariety containing $p_X(E)$.
\end{lemma}

\begin{proof}
Suppose $E_1$ is a weakly special subvariety containing $p_X(E)$. Then $E_1$ is an irreducible component of $\varphi^{-1}\pi(D_1)$, where $D_1$ is the $\tbM_1(\RR)^+\tbM_{1,u}(\CC)$-orbit of some mixed Hodge structure $h_1$ in $D$, for some normal algebraic $\QQ$-subgroup $\tbM_1$ of $\mc{MT}_{h_1}$. Since $E_1$ is algebraic \cite[Cor. 6.7]{BBKT}, $E_1$ contains $p_X(E)^{\Zar}$, so $\varphi(p_X(E)^{\Zar})$ is contained in $\pi(D_1)$. Hence, the image $\Pi_E$ of the lifting of the period mapping to the universal cover of the smooth locus $(p_X(E)^{\Zar})^{sm}$ of $p_X(E)^{\Zar}$ is contained in $\Gamma\cdot D_1$. Since this universal cover is irreducible, this image is contained in $D_1$, so $\Pi_E^{\Zar}\subset \wc{D}_1$.  
Hence, by Lemma \ref{weak period map}, $D(\tbP_E)\subset \wc{D}_1$.
By Lemma \ref{algebraicity of weak MT domain, dual}, $D_1$ is open in $\wc{D}_1$. 
The domain $D_1$ contains $D_1\cap D(\tbP_E)$, which is now open in $D(\tbP_E)$. Therefore, $E^{ws}$ is contained in $E_1$.
\end{proof}

For every closed algebraic variety $Z$ of $X$, we denote its algebraic monodromy group (i.e. the identity component of the $\QQ$-Zariski closure of the monodromy group of the GPVMHS restricted to the smooth locus $Z^{sm}$ of $Z$) by $\tbP_Z$.

\begin{lemma}\label{factor through domain}
Let $Z$ be a closed algebraic subvariety of $X$. If $X=\varphi^{-1}\pi (D(\tbP_Z))$, then $\tbP:=\tbP_X=\tbP_Z$.
\end{lemma}

\begin{proof}
If $X=\varphi^{-1}\pi (D(\tbP_Z))$, 
then $D(\tbP_Z)$ contains the image of the lifted period mapping on the universal cover of $X$ (as in the proof of Lemma \ref{weak period map}).
In particular, $\Gamma \cdot h\subset D(\tbP_Z)$ for some $h\in D(\tbP_Z)$.

Let $\varphi: X\ra \Gamma\bs D_{\mcMT}$ be the period mapping. 
Let $D_{\mcMT}\ra \Gamma\bs D_{\mcMT}$ be the projection.
The generic Mumford-Tate group $\mcMT_{Z^{sm}}$ of $Z^{sm}$ is $\mcMT$. 
Otherwise, $\varphi^{-1}\pi_{\mcMT}(D_{\mcMT_{Z^{sm}}})\subsetneq X$.
This contradicts that $X=\varphi^{-1}\pi (D(\tbP_Z))$ because $\varphi^{-1}\pi (D(\tbP_Z))\subset \varphi^{-1}_{\mcMT}\pi_{\mcMT}(D_{\mcMT_{Z^{sm}}})$.
By Andr\'{e} \cite[Proof of Theorem 1]{A}, $\tbP_Z$ is normal in 
$\mcMT:=\mcMT_{Z^{sm}}$.
The quotient map $\mcMT \ra \mcMT/\tbP_Z$ induces a morphism 
$$(\mcMT, D_{\mcMT})\ra (\mcMT/\tbP_Z, D_{\mcMT/\tbP_Z})$$
of mixed Hodge data.
We then know that the image of $\Gamma$ in $\mcMT(\QQ)/\tbP_Z(\QQ)$ is contained in the stabilizer of a point in $D_{\mcMT/\tbP_Z}$. 
Since this stabilizer is compact, the image of $\Gamma$ in $\mcMT(\QQ)/\tbP_Z(\QQ)$ is finite.
Now $\Gamma\cap \tbP_Z(\QQ)$ is of finite index in $\Gamma$, so the $\QQ$-Zariski closure of $\Gamma$ is contained in $\tbP_Z(\QQ)$, and thus $\tbP=\tbP_Z$.
\end{proof}

\begin{thm}\label{Andre-Deligne: monodromy and proper weakly special}
Let $Z$ be a closed algebraic subvariety of $X$. The algebraic monodromy group $\tbP_Z$ is equal to $\tbP_X$ if and only if $Z$ is not contained in any proper weakly special subvariety of $X$.
\end{thm}

\begin{proof}
If $\tbP_Z=\tbP$, then $X$ is the smallest weakly special subvariety containing $Z$ by Lemma \ref{monodromy and smallest weakly special}. 
On the other hand, suppose $Z$ is not contained in a proper weakly special subvariety of $X$. 
By Lemma \ref{monodromy and smallest weakly special}, $X=\varphi^{-1}\pi (D(\tbP_Z))$, 
so $\tbP_X=\tbP_Z$ by Lemma \ref{factor through domain}.
\end{proof}

\begin{bibdiv}
\begin{biblist}

\bib{A}{article} {author={Y. Andr\'{e}}, title={Mumford-Tate groups of mixed Hodge structures and the theorem of the fixed part},  journal={Compositio Math.}, volume={82(1)}, date={1992},  pages={1--24}}

\bib{Ax}{article}{author={J. Ax.}, title={On Schanuel's conjectures}, journal={Ann. of Math.}, volume={93(2)}, date={1971}, pages={252-268}}

\bib{BBKT}{article} {author={B. Bakker}, author={Y. Brunebarbe}, author={B. Klingler}, author={J. Tsimerman}, title={Definability of mixed period maps}, journal={J. Eur. Math. Soc.}, date={2023}}

\bib{BGST}{article}{author={B. Bakker}, author={T. W. Grimm}, author={C. Schnell}, author={J. Tsimerman}, title={Finiteness for self-dual classes in integral variations of Hodge structure},  journal={\'{E}pijournal de G\'{e}om\'{e}trie Alg\'{e}brique}, volume={1}, date={2023}}

\bib{BKT}{article} {author={B. Bakker}, author={B. Klingler}, author={J. Tsimerman}, title={Tame topology of arithmetic quotients and algebraicity of Hodge loci},  journal={J. Amer. Math. Soc.}, volume={33(4)}, date={2020},  pages={917--939}}

\bib{BT}{article} {author={B. Bakker}, author={J. Tsimerman}, title={The Ax-Schanuel conjecture for variations of Hodge structures},  journal={Invent. Math.}, volume={217(1)}, date={2019},  pages={77--94}}

\bib{BT22}{article} {author={B. Bakker }, author={J. Tsimerman}, title={Functional Transcendence of Periods and the Geometric Andr\'{e}--Grothendieck Period Conjecture},  pages={arXiv:2208.05182v1}}

\bib{BKU}{article} {author={G. Baldi}, author={B. Klingler}, author={E. Ullmo},  title={On the distribution of the Hodge locus}, date={2024}, journal={Invent. Math.}, volume={235}, pages={441--487}}

\bib{BD}{article} {author={F. Barroero}, author={G. A. Dill},  title={Distinguished categories and the Zilber-Pink conjecture},  pages={arXiv:2103.07422v3}}

\bib{Bor}{book} {author={A. Borel},  title={Linear algebraic groups}, publisher={Springer}, date={1991}}

\bib{BP}{article}{author={P. Brosnan}, author={G. Pearlstein}, title={On the algebraicity of the zero locus of an admissible normal function}, journal={Compositio Math.}, volume={149(11)}, date={2013}, pages={1913--1962}}

\bib{BZ}{book} {author={J.-L. Brylinski}, author={S. Zucker}, title={An overview of recent advances in Hodge theory},  publisher={in Complex Manifolds, Springer}, date={1998}}

\bib{CMP}{book}{author={J. Carlson}, author={S. M\"{u}ller-Stach}, author={C. Peters}, title={Period mappings and period domains}, publisher={Cambridge University Press}, date={2003}}

\bib{CKS}{article} {author={E. Cattani}, author={A. Kaplan}, author={W. Schmid}, title={Degeneration of Hodge structures},  journal={Ann. of Math.}, volume={123(3)}, date={1986},  pages={457--535}}

\bib{C}{article}{author={K. C. T. Chiu}, title={Ax-Schanuel with derivatives for mixed period mappings}, pages={arXiv:2110.03489v1}}

\bib{C1}{book}{author={K. C. T. Chiu }, title={Functional Transcendence in Mixed Hodge Theory}, publisher={University of Toronto,  PhD Thesis}, date={2022}}

\bib{DR}{article}{author={C. Daw}, author={J. Ren}, title={Applications of the hyperbolic Ax-Schanuel conjecture}, journal={Compositio Math.}, volume={154(9)}, date={2018}, pages={1843-1888}}

\bib{D}{article}{author={P. Deligne}, title={Th\'{e}orie de Hodge: II}, journal={Publ. Math. IH\'{E}S}, volume={40}, date={1971}, pages={5--57}}

\bib{DGH}{article}{author={V. Dimitrov}, author={Z. Gao}, author={P. Habegger}, title={Uniformity in Mordell-Lang for curves}, journal={Ann. of Math.}, volume={194(1)}, date={2021}, pages={237--298}}

\bib{DM}{article}{author={L. van den Dries}, author={C. Miller}, title={On the real exponential field with restricted analytic functions}, journal={Israel J. Math.}, volume={85}, date={1994}, pages={19-56}}

\bib{G2}{article} {author={Z. Gao }, title={Towards the Andr\'{e}-Oort conjecture for mixed Shimura varieties: The Ax-Lindemann theorem and lower bounds for Galois orbits of special points},   journal={J. Reine Angew. Math.}, volume={732}, date={2017},  pages={85--146}}

\bib{G}{article} {author={Z. Gao}, title={Mixed Ax-Schanuel for the universal abelian varieties and some applications},    journal={Compositio Math.}, volume={156(11)}, date={2020}, pages={2263--2297}}

\bib{G4}{article}{author={Z. Gao }, title={Generic rank of Betti map and unlikely intersections}, journal={Compositio Math.}, volume={156(12)}, date={2020}, pages={2469--2509}}

\bib{G3}{article}{author={Z. Gao}, author={B. Klingler}, title={The Ax-Schanuel conjecture for variations of mixed Hodge structures}, date={2023}, journal={Math. Ann.}}

\bib{GGK}{webpage}{author={M. Green}, author={P. Griffiths}, author={M. Kerr},  title={Mumford-Tate domains},   url={https://www.math.wustl.edu/~matkerr/MTD.pdf}}

\bib{H}{article} {author={D. R. Hast},  title={Functional transcendence for the unipotent Albanese map},   journal={Algebra Number Theory}, volume={15(6)}, date={2021}, pages={1565--1580}}

\bib{JW}{book}{author={G. O. Jones}, author={A. J. Wilkie}, title={O-Minimality and Diophantine Geometry}, publisher={Cambridge University Press}, date={2015}}

\bib{K}{article} {author={B. Klingler},  title={Hodge loci and atypical intersections: conjectures},  pages={arXiv:1711.09387v1}}

\bib{KUY}{article} {author={B. Klingler}, author={E. Ullmo}, author={A. Yafaev}, title={The hyperbolic Ax-Lindemann-Weierstrass conjecture},  journal={Publ. Math. IH\'{E}S}, volume={123}, date={2016},  pages={333--360}}

\bib{LS}{article} {author={B. Lawrence}, author={W. Sawin}, title={The Shafarevich conjecture for hypersurfaces in abelian varieties}, pages={arXiv:2004.09046v2}}

\bib{LV}{article} {author={B. Lawrence}, author={A. Venkatesh}, title={Diophantine problems and $p$-adic period mappings},  journal={Invent. Math.}, volume={221(3)}, date={2020},   pages={893--999}}

\bib{L}{book} {author={S. \L{}ojasiewicz},  title={Introduction to Complex Analytic Geometry}, publisher={Springer Basel AG}, date={1991}}

\bib{Mil}{webpage}{author={J. S. Milne}, title={Descent Theory}, date={2015}, url={https://www.jmilne.org/math/CourseNotes/AG16.pdf}}

\bib{Mil2}{book} {author={J. S. Milne },  title={Algebraic Groups: The Theory of Group Schemes of Finite Type Over a Field}, publisher={Cambridge University Press}, date={2017}}

\bib{MPT}{article} {author={N. Mok}, author={J. Pila}, author={J. Tsimerman}, title={Ax-Schanuel for Shimura varieties},  journal={Ann. of Math.}, volume={189(3)}, date={2019},  pages={945--978}}

\bib{Orr}{article}{author={M. Orr}, title={Height bounds and the Siegel property}, journal={Algebra Number Theory}, volume={12(2)}, date={2018}, pages={455--478}}

\bib{PP}{article} {author={G. Pearlstein}, author={C. Peters}, title={Differential geometry of the mixed Hodge metric},  journal={Communications in Analysis and Geometry}, volume={27(3)}, date={2019},  pages={671--742}}

\bib{PS08}{book} {author={Y. Peterzil}, author={S. Starchenko}, title={Complex analytic geometry in a nonstandard setting},  publisher={Model Theory with Applications to Algebra and Analysis, Z. Chatzidakis, A. Pillay, and A. Wilkie, editors, LMS Lecture Note Series 349, CUP}, date={2008}}

\bib{PS}{article} {author={Y. Peterzil }, author={S. Starchenko}, title={Tame complex analysis and o-minimality},  journal={Proceedings of the ICM, Hyderabad, 2010}}

\bib{PilSca}{article}{author={J. Pila}, author={T. Scanlon}, title={Effective transcendental Zilber-Pink for variations of Hodge structures}, pages={arXiv:2105.05845v1}}

\bib{PSTEG}{article}{author={J. Pila}, author={A. Shankar}, author={J. Tsimerman}, author={H. Esnault}, author={M. Groechenig},  title={Canonical Heights on Shimura Varieties and the Andr\'e-Oort Conjecture}, pages={2109.08788v2}}

\bib{PT}{article} {author={J. Pila }, author={J. Tsimerman}, title={Ax-Schanuel for the $j$-function},  journal={Duke Math. J.}, volume={165(13)}, date={2016},  pages={2587--2605}}

\bib{PW}{article} {author={J. Pila}, author={J. Wilkie}, title={The rational points of a definable set},  journal={Duke Math. J.}, volume={133(3)}, date={2006},  pages={591--616}}

\bib{PZ}{article}{author={J. Pila}, author={U. Zannier}, title={Rational points in periodic analytic sets and the Manin-Mumford conjecture}, journal={Rend. Lincei Mat. Appl.}, volume={19}, date={2008}, pages={149--162}}

\bib{PR}{book} {author={V. Platonov}, author={A. Rapinchuk}, title={Algebraic groups and number theory}, publisher={Academic Press}, date={1994}}

\end{biblist}
\end{bibdiv}

\end{document}